\theoremstyle{definition} 
\newtheorem{thm}{T}
\numberwithin{thm}{section}  
\numberwithin{equation}{section}
\newtheorem{definition}[thm]{Definition}
\newtheorem{theorem}[thm]{Theorem}
\newtheorem{lemma}[thm]{Lemma}
\newtheorem{corollary}[thm]{Corollary}
\newtheorem{conjecture}[thm]{Conjecture}
\newtheorem{proposition}[thm]{Proposition}
\newtheorem{remark}[thm]{Remark}
\newtheorem{example}[thm]{Example}
\newtheorem*{definition*}{Definition}
\newenvironment{customthm}[1]
  {\innercustomthm}
  {\endinnercustomthm}
\newcommand*\circled[1]{\tikz[baseline=(char.base)]{
   \node[shape=circle,draw,inner sep=1pt, thick] (char) {#1};}}
\newcommand*\circledRed[1]{\text{\normalsize $\color{rgb,255:red,180;green,51;blue,51}\circled{#1}$}}
\begin{document}

\title{Positivity of Matui's HK Conjecture for AF Groupoids}

\author{}
\author{Rafael P. Lima}

\address[R.P.\ Lima]{School of Mathematics and Statistics, Victoria University of Wellington, PO Box 600, Wellington 6140, NEW ZEALAND}
\email{{lima.rafael.p@gmail.com}}

\subjclass[2020]{22A22(primary), 46L05 (secondary)}
\keywords{elementary groupoid, AF groupoid}

\thanks{This research was funded by the Marsden Fund of the Royal Society of New Zealand (grant number 18-VUW-056)}

\begin{abstract}
In this paper we generalise an application of Matui's HK conjecture by Farsi, Kumjian, Pask, and Sims, that gives an isomorphism from the homology groups of AF groupoids to the corresponding K-theory. We give an explicit formula for this isomorphism, and we show that the map is an order isomorphism. Since homology groups are equipped with several useful techniques, this map can help us to understand the K-theory of the C*-algebra in more detail. To illustrate this, we apply the isomorphism to characterise the AF embeddability of the C*-algebra of Deaconu-Renault groupoids. 
\end{abstract}

\maketitle

\section{Introduction}

Matui \cite{Matui} studied how homology groups of groupoids can describe the dynamical properties the topological full groups of a groupoid. There, the homology groups $H_0(G)$ and $H_1(G)$ for \'etale groupoids over totally disconnected spaces are described by adapting the construction from Crainic and Moerdijk \cite{Crainic}. In \cite[Conjecture 2.6]{matui2015topological}, Matui formulated the HK conjecture, which connects the homology groups with the K-groups of the associated C*-algebras, stated below:

\begin{conjecture}[Matui's HK conjecture]
    Let $G$ be a locally compact, Hausdorff, \'etale groupoid, such that $G^{(0)}$ is a Cantor set. Suppose that $G$ is both effective and minimal. Then for $j=0,1$ we have
    \[
    K_j(C^*_r(G)) \cong \bigoplus_{i=0}^\infty H_{2i + j}(G).
    \]
\end{conjecture}

This conjecture holds for AF groupoids with compact unit spaces \cite[Theorem 4.10]{Matui}, and for \'etale groupoids arising from subshifts of finite type \cite{Matui}. Other examples can be found in \cite{bonicke2023dynamic, nyland2021matui, ortega2020homology, ortega2023almost, ProiettiYamashitaI, tanner2023rigidity, yi2020homology}. Proietti and Yamashita \cite{ProiettiYamashitaIV} found an example beyond the original assumptions of the conjecture, while Scarparo \cite{scarparo2020homology}, Deeley \cite{Deeley2023counterexample}, and Ortega and Sanchez \cite{OrtegaSanchez} provide counterexamples.

Farsi, Kumjian, Pask, and Sims \cite{FKPS} proved Matui's HK Conjecture for Deaconu-Renault groupoids of ranks one and two, and for k-graph groupoids of single vertex k-graphs which satisfy a mild joint-coprimality condition. Moreover, they generalised Matui's \cite[Theorem 4.10]{Matui}, showing in \cite[Corollary 5.2]{FKPS} that the conjecture holds for AF groupoids with non-compact unit spaces.

Additionally, Farsi et al. show that, for an AF groupoid $G$, there exists an isomorphism between the K-group $K_0(C^*(G))$ and the corresponding homology group $H_0(G)$. This map is such that $[1_V]_0 \mapsto [1_V]$ for every compact open set $V \subset G^{(0)}$. However, it is not clear from their paper whether this map is an isomorphism of ordered groups, i.e., that it preservers positive maps from one group to the other.

In this paper, we show that the map provided in Farsi et al. for AF groupoids is an ordered group isomorphism. Moreover, we give an explicit formula for the isomorphism, as we state below.

\begin{customthm}{1}
\label{thm:HK}
Let $G$ be an AF groupoid, written as the inductive limit $G = \displaystyle\lim_\rightarrow G_n$ of elementary groupoids. Then, for every $n$, there exists a locally compact, Hausdorff, second countable, totally disconnected space $Y_n$, and there is a surjective local homeomorphism $\sigma_n: G_n^{(0)} \rightarrow Y_n$ such that $G_n = R(\sigma_n)$. Moreover,
\begin{enumerate}[1.]
\item there exists a unique isomorphism $\nu: K_0(C^*(G)) \rightarrow H_0(G)$ defined by
\begin{align*}
\nu([p]_0) = [\mathrm{tr}_{\varphi_n}(p)]
\end{align*}
for $p \in \mathcal{P}_n(C^*(G_n))$, and such that $\varphi_n$ is an arbitrary continuous section of $\sigma_n$;
\item the map $\nu$ is an isomorphism of ordered groups, i.e., $\nu(K_0(C^*(G))^+) = H_0(G)^+$; and
\item the isomorphism $\nu$ is precisely the map given by \cite[Corollary 5.2]{FKPS}. In other words, $\nu([1_V]_0) = [1_V]$ for all $V \subset G^{(0)}$ compact open.
\end{enumerate}
\end{customthm}

The theorem presents two advantages. First, K-groups are important in the theory of C*-algebras, since several results rely on them. However, K-theory is often too abstract and hard to calculate. In contrast, homology groups are more manageable and are equipped with techniques that make them easier to understand. So, for the C*-algebra of an AF groupoid, the explicit formula above can be applied to understand the K-theory in more detail. Second, in some situations, we need to know the positive elements of the $K_0$-group, so an application of Matui's HK conjecture would be useless unless we guarantee that the isomorphism preserves positive elements from one group to another.

Note that Theorem \ref{thm:HK} brings another question: for other groupoids where Matui's HK conjecture hold, is the group isomorphism $K_0(C^*_r(G)) \rightarrow \bigoplus_{i=0}^\infty H_{2i}(G)$ also an ordered group isomorphism?

In the second part of the paper, we apply Theorem \ref{thm:HK} to solve a more practical problem, characterising when the C*-algebra of a large class of Deaconu-Renault groupoids is AF embeddable. The example illustrates the power of homology techniques when one needs to manipulate the elements of the $K_0$-group. We state the theorem as follows:

\begin{definition*}
Let $\sigma: X \rightarrow Y$ be a surjective local homeomorphism on locally compact, Hausdorff, second countable, totally disconnected spaces, and let $\varphi: Y \rightarrow X$ be a continuous section of $\sigma$. We define the map $\mathrm{tr}_\varphi: \mathcal{P}_\infty(C^*(R(\sigma))) \rightarrow C_c(X, \mathbb{N})$ by
\begin{itemize}
\item $\mathrm{tr}_\varphi(p)(x) = 1_{X_\varphi}(x) \displaystyle\sum_{y: \sigma(x) = \sigma(y)} p(y)$, \hspace{15pt} for $p \in C^*(R(\sigma))$, $x \in X$, and
\item $\mathrm{tr}_\varphi(p)(x) = \displaystyle\sum_{i=1}^n \mathrm{tr}_\varphi(p_{ii})(x)$, \hspace{52pt} for $p \in \mathcal{P}_n(C^*(R(\sigma)))$, $n \geq 1$,
\end{itemize}
where we identify $C^*(R(\sigma))$ as a subset of $C_0(R(\sigma))$ by applying the $j$ map from \cite[Proposition II.4.2]{Renault} and \cite[Proposition 9.3.3]{sims2017hausdorff}.
\end{definition*}

\begin{customthm}{2}
\label{thm:main}
Let $\sigma$ be a surjective local homeomorphism on a locally compact, Hausdorff, second countable, totally disconnected space $X$. Denote by $\mathcal{G}$ be the Deaconu-Renault groupoid corresponding to $\sigma$. Then the following are equivalent:
\begin{enumerate} [(i)]
\item $C^*(\mathcal{G})$ is AF embeddable,
\item $C^*(\mathcal{G})$ is quasidiagonal,
\item $C^*(\mathcal{G})$ is stably finite,
\item $\mathrm{Im}(\sigma_\ast - \mathrm{id}) \cap C_c(X, \mathbb{N}) = \lbrace 0 \rbrace$,
\end{enumerate}
where the map $\sigma_\ast: C_c(X, \mathbb{Z}) \rightarrow C_c(X, \mathbb{Z})$ is defined by
\begin{align*}
\sigma_\ast(f)(x) = \sum_{y: \sigma(y) = x} f(y)
\hspace{20pt}
\text{for $x \in X$, $f \in C_c(X, \mathbb{Z})$.}
\end{align*}
\end{customthm}

The construction of the Deaconu-Renault groupoid $\mathcal{G}$ depends on the surjective local homeomorphism $\sigma$. The contribution of Theorem \ref{thm:main} is that it provides a condition on $\sigma$ that is equivalent to the AF embeddability of the C*-algebra of the corresponding Deaconu-Renault groupoid. This theorem is analogous to similar results for graph algebras \cite{Schafhauser-AFEgraph}, for crossed products of unital commutative C*-algebras by the integers \cite[Theorem 11.5]{Brown-quasidiagonal}, \cite[Theorem 9]{Pimsner}. Schafhauser \cite[Theorem C]{SchafhauserJFA2015} studied Cuntz-Pimsner algebras. Kumjian and Pask \cite{KP-kgraph} study higher rank graph algebras and give sufficient conditions that make the C*-algebras AF or purely infinite. Clark, an Huef, and Sims \cite{CaHS} characterised the AF embeddability of C*-algebras of row-finite and cofinal $k$-graphs with now sources, for $k = 2$. Schafhauser \cite[Corollary 6.2]{Schafhauser-AFEsimple} extendded this result for an arbitrary $k$. In particular, we apply the same strategy from \cite{CaHS}, by applying \cite[Theorem 0.2]{Brown-AFE} to describe AF embeddability in terms of a K-theory condition, and then analysing this condition to find a more concrete condition which can be described by the equation (iv) of the Theorem \ref{thm:main}.

Note that C*-algebras of Deaconu-Renault groupoids generalise graph algebras \cite{KPRR} and are examples of topological graph algebras \cite[Proposition 10.9]{KatsuraII}. When $X$ is compact, then the corresponding topological graph algebra is unital and, in this case, Theorem \ref{thm:main} is an example of \cite[Theorem 6.7]{Schafhauser-topologicalgraphs}, that characterises when a unital topological graph algebra is AF embeddable.


In order to prove Theorem \ref{thm:main}, we apply the following result by Brown \cite{Brown-AFE}, that gives an equivalent condition to AF embeddability of crossed products of AF algebras by the integers:

\begin{definition*}
If $B$ is an AF algebra and $\beta \in \mathrm{Aut}(B)$, then we denote by $H_\beta$ the subgroup of $K_0(B)$ given by all elements of the form $K_0(\beta)(x) - x$ for $x \in K_0(B)$.
\end{definition*}

\begin{theorem} (Brown) \cite[Theorem 0.2]{Brown-AFE}
\label{thm:brown}
If $B$ is an AF algebra and $\beta \in \mathrm{Aut}(B)$, then the following are equivalent:
\begin{enumerate} [(a)]
\item $B \rtimes_\beta \mathbb{Z}$ is AF embeddable,
\item $B \rtimes_\beta \mathbb{Z}$ is quasidiagonal,
\item $B \rtimes_\beta \mathbb{Z}$ is stably finite,
\item $H_\beta \cap K_0(B)^+ = \lbrace 0 \rbrace$.
\end{enumerate}
\end{theorem}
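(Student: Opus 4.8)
The plan is to prove the cyclic chain of implications $(a) \Rightarrow (b) \Rightarrow (c) \Rightarrow (d) \Rightarrow (a)$, in which the first three arrows are soft permanence properties from the general theory of C*-algebras and the last arrow carries essentially all of the content. For $(a) \Rightarrow (b)$ I would invoke two standard facts about quasidiagonality: every AF algebra is quasidiagonal (being an inductive limit of finite-dimensional algebras, each trivially quasidiagonal), and a C*-subalgebra of a quasidiagonal C*-algebra is again quasidiagonal (restrict a faithful quasidiagonal representation; a subset of a quasidiagonal set of operators is quasidiagonal). Composing these shows that an AF embeddable algebra is quasidiagonal. For $(b) \Rightarrow (c)$ I would use that quasidiagonality is preserved under the matrix amplifications $M_n(\cdot)$ and that a unital quasidiagonal C*-algebra is finite, i.e.\ contains no non-unitary isometry; together these say that each $M_n(B \rtimes_\beta \mathbb{Z})$ is finite, which is precisely stable finiteness.

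For $(c) \Rightarrow (d)$ the tool is the Pimsner–Voiculescu six-term exact sequence, whose relevant segment reads
\begin{align*}
K_0(B) \xrightarrow{\;\mathrm{id} - K_0(\beta)\;} K_0(B) \xrightarrow{\;\iota_\ast\;} K_0(B \rtimes_\beta \mathbb{Z}),
\end{align*}
where $\iota \colon B \hookrightarrow B \rtimes_\beta \mathbb{Z}$ is the canonical inclusion. Exactness gives $\ker \iota_\ast = \mathrm{Im}(\mathrm{id} - K_0(\beta)) = H_\beta$. Now suppose for contradiction that $x \in H_\beta \cap K_0(B)^+$ is nonzero; since $B$ is AF it has cancellation, so $x = [p]$ for a genuine projection $p$ in some $M_n(B)$. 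Exactness yields $[p] = \iota_\ast(x) = 0$ in $K_0(B \rtimes_\beta \mathbb{Z})$, which means that for some projection $q$ the projection $p \oplus q$ is Murray–von Neumann equivalent to its subprojection $q$. Stable finiteness of the crossed product then forces $p \oplus q = q$, so $p = 0$ and hence $x = 0$, a contradiction. This establishes $(d)$.

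The main obstacle is $(d) \Rightarrow (a)$, which is the heart of Brown's argument. Here the plan is first to reduce to the case in which $B$ is unital (replacing $B$ by a unitization compatible with $\beta$, and observing that both $(a)$ and $(d)$ are insensitive to this reduction), to write $B = \varinjlim (B_n, \phi_n)$ with each $B_n$ finite-dimensional, and then to construct an AF algebra $A = \varinjlim A_n$ together with a faithful embedding $B \rtimes_\beta \mathbb{Z} \hookrightarrow A$. The construction proceeds by an inductive Bratteli-diagram procedure producing, at stage $n$, approximately multiplicative and approximately isometric maps of $B_n$ into $A_n$ together with a unitary $u_n \in A_n$ that approximately implements $\beta$ on the image; a Berg-type interpolation of these unitaries is then used so that the connecting maps $A_n \to A_{n+1}$ can be taken to be genuine $\ast$-homomorphisms, making the limit $A$ an AF algebra. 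The role of hypothesis $(d)$ is precisely to annihilate the K-theoretic obstruction to this interpolation: the condition $H_\beta \cap K_0(B)^+ = \{0\}$ guarantees that no positive dimension datum is forced to collapse under $\mathrm{id} - K_0(\beta)$, so the approximating dimensions can be matched along the diagram without creating an infinite or compressed projection.

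I expect the delicate points of the final implication to be two: verifying that the approximate covariance relations $u_n \, \iota_n(b) \, u_n^\ast \approx \iota_n(\beta(b))$ survive passage to the inductive limit, and verifying that the limiting map is isometric (hence a faithful embedding rather than merely an asymptotic one). For both I would lean on Pimsner's analysis of the commutative prototype $C(X) \rtimes_\alpha \mathbb{Z}$, which already isolates how the absence of the order-theoretic obstruction permits an honest AF embedding, and on the nuclearity and exactness of $B$ to control the completely positive approximations. The whole difficulty of the theorem is thus concentrated in translating the purely K-theoretic statement $(d)$ into the geometric statement that the Bratteli-diagram dimensions never force a non-AF phenomenon.
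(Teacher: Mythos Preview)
The paper does not prove this theorem at all: it is stated as a quotation of Brown's result \cite[Theorem 0.2]{Brown-AFE} and is used as a black box in the proof of Theorem~\ref{thm:main}. There is therefore no ``paper's own proof'' to compare against; the author simply invokes Brown's theorem and applies it to the AF algebra $B = C^*(\mathcal{G}(c))$ with the automorphism $\beta$ of Proposition~\ref{prop:beta}.

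As for your outline itself, the soft implications $(a)\Rightarrow(b)\Rightarrow(c)$ and the Pimsner--Voiculescu argument for $(c)\Rightarrow(d)$ are correct and standard. Your sketch of $(d)\Rightarrow(a)$ captures the right spirit---an inductive Bratteli-diagram construction with a Berg-type interpolation controlled by the order-theoretic hypothesis---but this is where essentially all of Brown's work lies, and what you have written is a plan rather than a proof: the actual argument requires a careful Rokhlin-type tower construction and a delicate bookkeeping of dimensions to ensure the limiting map is a genuine injective $\ast$-homomorphism. If you intend to reproduce Brown's proof you will need to supply those details; if, like the present paper, you only intend to \emph{use} the theorem, then a citation suffices.
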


Note that AF embeddability, quasidiagonality and stable finiteness are preserved by stable isomorphisms. If we choose an AF algebra $B$ such that $B \rtimes_\beta \mathbb{Z}$ is stably isomorphic to $C^*(\mathcal{G})$, then we can apply Brown's theorem to get a characterisation of the AF embeddability of $C^*(\mathcal{G})$. Indeed, we let $c: \mathcal{G} \rightarrow \mathbb{Z}$ be the canonical cocycle, and we let $B = C^*(\mathcal{G}(c))$ be the C*-algebra of the skew product $\mathcal{G}(c)$. This C*-algebra is AF by \cite[Lemma 6.1]{FKPS}. Theorem III.5.7 of \cite{Renault} gives the isomorphism
\begin{align*}
B \cong C^*(\mathcal{G}) \rtimes_{\alpha^c} \mathbb{T},
\end{align*}
where $\alpha^c$ is the action of $\mathbb{T}$ on $C^*(\mathcal{G})$ corresponding to the cocycle $c$. We let $\beta$ be an automorphism on $B$ corresponding to the dual map $\widehat{\alpha}^c$ with respect to the isomorphism above. By applying the crossed products on both sides of the equivalence above with the respective maps, we get the isomorphism $B \rtimes_\beta \mathbb{Z} \cong C^*(\mathcal{G}) \rtimes_{\alpha^c} \mathbb{T} \rtimes_{\widehat{\alpha}^c} \mathbb{Z}$, and by Takai duality, we have that $B \rtimes_\beta \mathbb{Z}$ is stably isomorphic to $C^*(\mathcal{G})$. So, we apply Brown's theorem to $C^*(\mathcal{G})$ and we get the equivalence of items (i)--(iii) of Theorem \ref{thm:main} with item (d) of Brown's theorem.

This equivalence gives a characterisation of when the C*-algebra is AF embeddable. Note that item (d) depends on the K-theory, which is very abstract. Moreover, in order to understand (d), we need to know the positive elements of $K_0(B)$. We use the theory of homology groups of groupoids in order to show that (d) is equivalent to condition (iv) of Theorem \ref{thm:main}. By applying results from \cite{FKPS, Matui} on homology groups, we get the commutative diagram below.

\begin{equation}
\label{eqn:diagram}
 \centering
\begin{tikzcd}
      K_0(B) \arrow[r, "K_0(\beta)"] \arrow{d}[swap]{\cong} 
      & K_0(B) \arrow[d, "\cong"]\\
      H_0(\mathcal{G}(c))  \arrow[r, "{[\widetilde{\beta}]}_{\mathcal{G}(c)}"] \arrow{d}[swap]{\cong} & H_0(\mathcal{G}(c))  \arrow[d,"\cong"]\\
      H_0(c^{-1}(0))  \arrow{r}{[\sigma_\ast] }[swap]{} & H_0(c^{-1}(0))
\end{tikzcd}
\end{equation}

It follows from \cite[Corollary 5.2]{FKPS} that $\mathcal{G}(c)$ is AF groupoid. Then Theorem \ref{thm:HK} gives the ordered group isomorphism $K_0(B) \rightarrow H_0(\mathcal{G}(c))$.  The groupoid $c^{-1}(0)$ is the kernel of the cocycle $c:\mathcal{G} \rightarrow \mathbb{Z}$ defined by $c(x,k,y) = k$. We define the isomorphism $H_0(\rho_\varphi): H_0(\mathcal{G})(c)) \rightarrow H_0(c^{-1}(0))$ in Section \ref{section:diagram}. The homomorphisms $K_0(\beta)$ and $[ \widetilde{\beta} ]_{\mathcal{G}(c)}$ are group homomorphisms induced by $\beta$, and the group homomorphism $[\sigma_\ast]$ is induced by the map $\sigma_\ast$ of Theorem \ref{thm:main}.

By studying the diagram and by applying properties of homology groups of groupoids, we can show that items (d) and (iv) are equivalent, proving Theorem \ref{thm:main}.



We organise the paper as follows: in Section \ref{section:preliminaries}, we study inductive limits, AF groupoids, abstract homology groups, and homology groups for groupoids. In Section \ref{section:homology}, we study the homology groups of AF groupoids, while in Section \ref{section:KtheoryAFgroupoids} we prove Theorem \ref{thm:HK}, our main result. In Section \ref{section:application}, we apply the isomorphism from Theorem \ref{thm:HK} and the tools from homology groups to characterise the AF embeddability of the C*-algebra of the Deaconu-Renault groupoid by Theorem \ref{thm:main}.  Finally, in Section 6, we show that this theorem generalises known results in the literature.


The author would like to thank Astrid an Huef and Lisa Orloff Clark for their supervision, Camila F. Sehnem for the helpful discussions, and  Nathan Brownlowe, Carla Farsi, and Hung Pham for their suggestions.

\section{Preliminaries}
\label{section:preliminaries}

\subsection{Inductive limits}

An \emph{inductive sequence} in a category $\mathscr{C}$ is a sequence $\lbrace A_n \rbrace_{n = 1}^\infty$ of objects in $\mathscr{C}$ and a sequence $\varphi_n: A_n \rightarrow A_{n+1}$ of morphisms in $\mathscr{C}$, usually written as
\begin{equation}
\label{eqn:inductivesequence}
\begin{tikzcd}
A_1 \arrow{r}{\varphi_1}[swap]{} &
A_2 \arrow{r}{\varphi_2}[swap]{} &
A_3 \arrow{r}{\varphi_3}[swap]{} &
\cdots
\end{tikzcd}    
\end{equation}

For $m > n$ we shall also consider the composed morphisms
\begin{align*}
\varphi_{m,n} = \varphi_{m-1} \circ \varphi_{m-2} \circ \cdots \circ \varphi_n: A_n \rightarrow A_m,
\end{align*}
which, together with the morphisms $\varphi_n$, are called the \emph{connecting morphisms} (or \emph{connecting maps}). An \emph{inductive limit} of the inductive sequence \eqref{eqn:inductivesequence} in a category $\mathscr{C}$ is a system $(A, \lbrace \mu_n \rbrace_{n=1}^\infty)$, where $A$ is an object in $\mathscr{C}$, $\mu_n: A_n \rightarrow A$ is a morphism in $\mathscr{C}$ for each $n \geq 1$, and where the following two conditions hold.
\begin{enumerate}[(i)]
\item The diagram
$$
\begin{tikzcd}
A_n \arrow{rr}{\varphi_n}[swap]{} \arrow{dr}{}[swap]{\mu_n} &&
A_{n+1} \arrow{dl}{\mu_{n+1}}[swap]{}\\
 & A
\end{tikzcd}
$$
commutes for each positive integer $n$.

\item If $(B, \lbrace \lambda_n \rbrace_{n=1}^\infty)$ is a system, where $B$ is an object in $\mathscr{C}$, $\lambda_n: A_n \rightarrow B$ is a morphism in $\mathscr{C}$ for each $n$, and where $\lambda_n = \lambda_{n+1} \circ \varphi_n$ for all  $n \geq 1$, then there is one and only one morphism $\lambda: A \rightarrow B$ making the diagram
$$
\begin{tikzcd}
& A_n \arrow{dl}{}[swap]{\mu_n} \arrow{dr}{\lambda_n}[swap]{} \\
A \arrow{rr}{}[swap]{\lambda} && B
\end{tikzcd}
$$
commutative for each $n$.
\end{enumerate}
In this text, will call the morphisms $\mu_n$ the \emph{inclusion morphisms} of $A$

Inductive limits, when they exist, are essentially unique in the sense that if $(A, \lbrace \mu_n \rbrace)$ and $(B, \lbrace \lambda_n \rbrace)$ are inductive limits of the same inductive sequence, then there is an isomorphism $\lambda:A \rightarrow B$ making the diagram of item (ii) above commutative. See \cite[page 93]{RLL-Ktheory} for details. Because of the essential uniqueness of inductive limits (when they exist), we shall refer to \textit{the} inductive limit (rather than \textit{an} inductive limit).

Consider an AF algebra $A = \overline{\bigcup_{n=0}^\infty A_n}$, where each $A_n \subset A_{n+1}$ is finite dimensional. Then $A = \displaystyle\lim_{\longrightarrow} A_n$ and $K_0(A) = \displaystyle\lim_{\longrightarrow} K_0(A_n)$.

\begin{lemma}
\label{lemma:homomorphisminductive}
Let $A$ be an object in a category $\mathscr{C}$ such that $A$ is the inductive limit of an inductive sequence with connecting morphisms $i_n: A_n \rightarrow A_{n+1}$ and inclusion morphisms $\alpha_n: A_n \rightarrow A$. Similarly, let $B$ be an inductive limit in $\mathscr{C}$ with connecting morphisms $j_n: B_n \rightarrow B_{n+1}$ and inclusion morphisms $\beta_n: B_n \rightarrow B$. Let $\lbrace \mu_n \rbrace_{n = 0}^\infty$ be a sequence of morphisms $\mu_n : A_n \rightarrow B_n$ satisfying $\mu_{n+1} \circ i_n = j_n \circ \mu_n$. Then there exists a unique morphism $\mu: A \rightarrow B$ making the diagram below commutative.
\begin{equation}
\label{eqn:homomorphisminductive}
\begin{tikzcd}
A_n \arrow[r,"\mu_n"] 
\arrow{d}{}[swap]{\alpha_n}
& B_n \arrow[d, "\beta_n"]\\
A \arrow{r}{\mu}[swap]{}
& B
\end{tikzcd}
\end{equation}
Moreover, if each $\mu_n$ is an isomorphism, we have that $\mu$ is an isomorphism.
\end{lemma}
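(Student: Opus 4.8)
The plan is to reduce everything to the universal property (item (ii) in the definition of inductive limit) applied twice; the argument is purely formal and uses nothing about the particular category $\mathscr{C}$. First I would define the candidate cocone $\lambda_n := \beta_n \circ \mu_n : A_n \to B$ for each $n$ and verify that it is compatible with the connecting morphisms of $A$, i.e.\ that $\lambda_{n+1}\circ i_n = \lambda_n$. This is a one-line computation: $\lambda_{n+1}\circ i_n = \beta_{n+1}\circ\mu_{n+1}\circ i_n = \beta_{n+1}\circ j_n\circ\mu_n = \beta_n\circ\mu_n = \lambda_n$, where the second equality is the hypothesis $\mu_{n+1}\circ i_n = j_n\circ\mu_n$ and the third is the commuting triangle (item (i)) for the inductive limit $B$, namely $\beta_{n+1}\circ j_n = \beta_n$. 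By the universal property of $A = \lim_\rightarrow A_n$ applied to the system $(B,\{\lambda_n\})$, there is then one and only one morphism $\mu : A \to B$ with $\mu\circ\alpha_n = \lambda_n = \beta_n\circ\mu_n$ for all $n$; this is exactly the existence and uniqueness asserted, and it makes diagram \eqref{eqn:homomorphisminductive} commute.

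For the ``moreover'' statement, suppose each $\mu_n$ is an isomorphism. I would first note that the inverses satisfy the dual compatibility relation $\mu_{n+1}^{-1}\circ j_n = i_n\circ\mu_n^{-1}$, obtained from $\mu_{n+1}\circ i_n = j_n\circ\mu_n$ by composing with $\mu_{n+1}^{-1}$ on the left and $\mu_n^{-1}$ on the right. Applying the first part of the lemma to the sequence $\{\mu_n^{-1}\}$ (with the roles of $A$ and $B$ exchanged) yields a morphism $\nu : B \to A$ with $\nu\circ\beta_n = \alpha_n\circ\mu_n^{-1}$ for all $n$. Then both $\nu\circ\mu$ and $\mathrm{id}_A$ satisfy $(\nu\circ\mu)\circ\alpha_n = \nu\circ\beta_n\circ\mu_n = \alpha_n\circ\mu_n^{-1}\circ\mu_n = \alpha_n$, so the uniqueness clause in the universal property of $A$ (taking $B = A$ and $\lambda_n = \alpha_n$ there) forces $\nu\circ\mu = \mathrm{id}_A$; symmetrically $\mu\circ\nu = \mathrm{id}_B$, and hence $\mu$ is an isomorphism.

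I do not expect a genuine obstacle here: the proof is entirely a diagram chase with universal properties. The only point requiring care is bookkeeping—keeping straight which commuting relation (the triangles of item (i), the hypothesis on the $\mu_n$, or the uniqueness clause of item (ii)) is being invoked at each step, and observing that only the single-step connecting morphisms $i_n, j_n$ are needed, not the composites $i_{m,n}, j_{m,n}$. One could alternatively phrase the statement as the functoriality of the inductive-limit construction on the category of inductive sequences and their morphisms, but the two direct applications of the universal property above give the most self-contained route.
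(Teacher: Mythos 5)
Your proof is correct: the paper states this lemma without proof (treating it as a standard fact about inductive limits), and your argument is exactly the standard one it implicitly relies on — build the cocone $\lambda_n=\beta_n\circ\mu_n$, check $\lambda_{n+1}\circ i_n=\lambda_n$ using the hypothesis on the $\mu_n$ and the triangle identity for $B$, and invoke the universal property of $A$ for existence and uniqueness, then apply the same construction to $\{\mu_n^{-1}\}$ and the uniqueness clause twice to get that $\mu$ is an isomorphism. All the individual computations ($\lambda_{n+1}\circ i_n=\lambda_n$, $\mu_{n+1}^{-1}\circ j_n=i_n\circ\mu_n^{-1}$, and $\nu\circ\mu=\mathrm{id}_A$, $\mu\circ\nu=\mathrm{id}_B$ via uniqueness) check out, so there is nothing to correct.
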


\subsection{AF groupoids}

There are different definitions for AF groupoids in the literature. The first was given by Renault \cite[Definition III.1.1]{Renault} in his book, and he proved in \cite[Proposition III.1.15]{Renault} that AF groupoids determine AF algebras up to isomorphism. Because of this result, we can see AF groupoids as groupoid analogues of AF algebras. Other definitions for such groupoids have been introduced in the literature \cite{Renault-AF}, \cite[Definition 3.7]{GPS}, \cite[Definition 2.2]{Matui}, and \cite[Definition 4.9]{FKPS}. In \cite{CaHLS} there is a proof of the equivalence of the definitions \cite[Definition III.1.1]{Renault} and \cite[Definition 4.9]{FKPS}. In this paper, we consider the Definition 4.9 of \cite{FKPS}, by Farsi, Kumjian, Pask, and Sims.

Given the topological spaces $X$ and $Y$, a map $\sigma: X \rightarrow Y$ is a \emph{local homeomorphism} if every $x \in X$ has an open neighbourhood $\mathcal{U}$ such that $\sigma(\mathcal{U})$ is open and $\sigma\vert_{\mathcal{U}}: \mathcal{U} \rightarrow \sigma(\mathcal{U})$ is a homeomorphism. A continuous map $\varphi: Y \rightarrow X$ is a continuous section of $\sigma$ if $y = \sigma(\varphi(y))$ for all $y$. Using the argument from the proof of \cite[Theorem 4.10]{FKPS}, one can show that every surjective local homeomorphism has a continuous section.

\begin{definition}
Let $X, Y$ be locally compact, Hausdorff, second countable, totally disconnected spaces. Let $\sigma: X \rightarrow Y$ be a surjective local homeomorphism. Define the set
\begin{align*}
R(\sigma) = \lbrace (u,v) \in X \times X: \sigma(u) = \sigma(v) \rbrace,
\end{align*}
and equip it with the operations $(u,v)(v,w) = (u,w)$ and $(u,v)^{-1} = (v,u)$. Then $R(\sigma)$ becomes a groupoid. We endow this set with the subspace topology from $X \times X$. Then $R(\sigma)$ is locally compact, Hausdorff, second countable, \'etale, and totally disconnected.
\end{definition}

\begin{definition}
We say that a locally compact, Hausdorff, second countable, totally disconnected groupoid is \emph{elementary} if it is isomorphic to the groupoid $R(\sigma)$ for some surjective local homeomorphism $\sigma: X \rightarrow Y$ between locally compact, Hausdorff, second countable, totally disconnected spaces. A groupoid is \emph{AF} if it is the increasing union of open elementary groupoids with the same unit space.
\end{definition}

Note that every AF groupoids is locally compact, Hausdorff, second countable, \'etale, totally disconnected, and amenable. If $G$ an AF groupoid is the increasing union of the open elementary subgroupoids $G_n$. Then $G$ is the inductive limit $G = \displaystyle\lim_{\longrightarrow} G_n$ \cite[Section III.1]{Renault}.

\begin{remark}
Given an elementary groupoid $G \cong R(\sigma)$ with $\sigma: X \rightarrow Y$, we apply the isomorphism to find a local homeomorphism $\widetilde{\sigma}: G^{(0)} \rightarrow Y$ such that $G = R(\widetilde{\sigma})$. So, we can suppose that $G = R(\sigma)$ without loss of generality.
\end{remark}

\begin{example}
\label{example:cinv0}
Let $X$ be a locally compact, Hausdorff, second countable, totally disconnected space, and let $\sigma: X \rightarrow X$ be a surjective local homeomorphism. Define the set
\begin{align*}
c^{-1}(0) = \lbrace (x, y) \in X^2: \sigma^n(x) = \sigma^n(y) \text{ for some } n \in \mathbb{N} \rbrace,
\end{align*}
and equip this set with the operations $(x,y)(y,z) = (x,z), (x,y)^{-1} = (y,x)$ and with the range a source maps $r,s: c^{-1}(0) \rightarrow X$ given by $r(x,y) = x$ and $s(x,y) = y$. We also equip the set with the subspace topology from $X \times X$. Then $c^{-1}(0)$ is an AF groupoids. In fact, it is the increasing union of the open elementary subgroupoids $R(\sigma^n)$.
\end{example}

\subsection{Homology in algebra} First we present the definition of homology groups from abstract algebra. A possible reference is  \cite[Section 17.1]{dummitfoote}.

\begin{definition}
Let $\mathcal{C}$ be a sequence of abelian group homomorphisms
$$
\begin{tikzcd}
0 &
\arrow{l}{}[swap]{d_0} C_0 &
\arrow{l}{}[swap]{d_1} C_1 &
\arrow{l}{}[swap]{d_2} \cdots
\end{tikzcd}
$$
such that $d_n \circ d_{n+1} = 0$ for all $n \geq 0$. The sequence is called a \emph{chain complex}. If $\mathcal{C}$ is a chain complex, its $n^{\text{th}}$ \emph{homology group} is the quotient group $\ker d_{n}/\mathrm{im\hphantom{.}}d_{n+1}$, and is denoted by $H_n(\mathcal{C})$.
\end{definition}

\begin{definition}
\label{def:homomorphism_complexes}
Let $\mathcal{A} = \lbrace A_n \rbrace_{n \geq 0}$ and $\mathcal{B} = \lbrace B_n \rbrace_{n \geq 0}$ be chain complexes. A \emph{homomorphism of complexes} $\alpha: \mathcal{A} \rightarrow \mathcal{B}$ is a set of homomorphisms $\alpha_n: A_n \rightarrow B_n$ such that for every $n$ the following diagram commutes:
$$
\begin{tikzcd}
\cdots &
A_n \arrow{l}{}[swap]{d_n} \arrow{d}{}[swap]{\alpha_n} &
A_{n+1} \arrow{l}{}[swap]{d_{n+1}} \arrow{d}{}[swap]{\alpha_{n+1}} &
\cdots \arrow{l}{}[swap]{d_{n+2}}\\
\cdots &
B_n \arrow{l}{}[swap]{d_n} &
B_{n+1} \arrow{l}{}[swap]{d_{n+1}} &
\cdots \arrow{l}{}[swap]{d_{n+2}}
\end{tikzcd}
$$
\end{definition}

\begin{proposition}
\label{prop:chain_homology_homomorphism}
A homomorphism $\alpha: \mathcal{A} \rightarrow \mathcal{B}$ of chain complexes induces a group homomorphism $H_n(\alpha): H_n(\mathcal{A}) \rightarrow H_n(\mathcal{B})$, for $n \geq 0$, given by
\begin{align}
\label{eqn:chain_homology_homomorphism}
H_n(\alpha)([a]) = [\alpha_n(a)],
\text{\hspace{10pt}for $a \in A_n$ with $d_n(a) = 0$.}
\end{align}
\end{proposition}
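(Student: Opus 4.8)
The plan is to carry out the standard diagram chase: a chain map carries cycles to cycles and boundaries to boundaries, hence descends to the homology quotients. Throughout I would write $d_n^{\mathcal{A}}$ and $d_n^{\mathcal{B}}$ for the differentials of the two complexes, so that the commuting squares of Definition \ref{def:homomorphism_complexes} read $\alpha_{n-1} \circ d_n^{\mathcal{A}} = d_n^{\mathcal{B}} \circ \alpha_n$ for every $n \geq 1$ (for $n = 0$ both $d_0$ land in the zero group, so nothing needs checking, and one may adopt the convention $A_{-1} = B_{-1} = 0$ to make the argument uniform).

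First I would check that $\alpha_n(\ker d_n^{\mathcal{A}}) \subseteq \ker d_n^{\mathcal{B}}$: if $a \in A_n$ with $d_n^{\mathcal{A}}(a) = 0$, then $d_n^{\mathcal{B}}(\alpha_n(a)) = \alpha_{n-1}(d_n^{\mathcal{A}}(a)) = \alpha_{n-1}(0) = 0$, so the formula \eqref{eqn:chain_homology_homomorphism} produces an element of $\ker d_n^{\mathcal{B}}$, hence a class in $H_n(\mathcal{B})$. Next I would check that $\alpha_n(\operatorname{im} d_{n+1}^{\mathcal{A}}) \subseteq \operatorname{im} d_{n+1}^{\mathcal{B}}$: if $a = d_{n+1}^{\mathcal{A}}(a')$ for some $a' \in A_{n+1}$, then $\alpha_n(a) = \alpha_n(d_{n+1}^{\mathcal{A}}(a')) = d_{n+1}^{\mathcal{B}}(\alpha_{n+1}(a'))$, which lies in $\operatorname{im} d_{n+1}^{\mathcal{B}}$.

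These two inclusions yield well-definedness: if $a, b \in \ker d_n^{\mathcal{A}}$ represent the same class in $H_n(\mathcal{A})$, then $a - b \in \operatorname{im} d_{n+1}^{\mathcal{A}}$, so $\alpha_n(a) - \alpha_n(b) = \alpha_n(a - b) \in \operatorname{im} d_{n+1}^{\mathcal{B}}$, whence $[\alpha_n(a)] = [\alpha_n(b)]$ in $H_n(\mathcal{B})$. Thus $H_n(\alpha) \colon H_n(\mathcal{A}) \to H_n(\mathcal{B})$, $[a] \mapsto [\alpha_n(a)]$, is well defined, and it is a group homomorphism since $\alpha_n$ is one and passing to a quotient group is a homomorphism: $H_n(\alpha)([a] + [b]) = [\alpha_n(a + b)] = [\alpha_n(a)] + [\alpha_n(b)]$.

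There is no genuine obstacle here: the argument is a routine diagram chase, and the only point that requires any care is the bookkeeping of the indices together with the degenerate case $n = 0$, where $\ker d_0 = C_0$ so that every element of $A_0$ is a cycle and the first step is vacuous.
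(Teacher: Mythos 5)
Your argument is correct and complete: the two inclusions $\alpha_n(\ker d_n^{\mathcal{A}})\subseteq\ker d_n^{\mathcal{B}}$ and $\alpha_n(\operatorname{im} d_{n+1}^{\mathcal{A}})\subseteq\operatorname{im} d_{n+1}^{\mathcal{B}}$, followed by additivity, are exactly what is needed, and your handling of the degenerate case $n=0$ (where $\ker d_0 = C_0$) is consistent with the paper's conventions. The paper gives no proof of this proposition, deferring to the standard reference (\cite[Section 17.1]{dummitfoote}), and your diagram chase is precisely that standard argument.
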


\subsection{Homology for groupoids} We define homology groups for Hausdorff, second countable, ample groupoids, as in \cite{FKPS, Matui}. We fix $X, Y$ to be locally compact, Hausdorff, second countable, totally disconnected spaces. 

Here, $C_c(X, \mathbb{Z})$ denotes the set of continuous and compactly supported functions on $X$ assuming integer values. Given $f \in C_c(X, \mathbb{Z})$ and a local homeomorphism $\sigma: X \rightarrow Y$, define for every $y \in Y$,
\begin{align*}
    \sigma_\ast(f)(y) = \sum_{x: \sigma(x) = y} f(x).
\end{align*}
This equation defines a function $\sigma_\ast(f) \in C_c(X, \mathbb{Z})$. For $n \geq 1$, we define the \emph{space of composable $n$-tuples} by
\begin{align*}
    G^{(n)}
    = \lbrace (g_1, \dots, g_n) \in G^n :
    s(g_i) = r(g_{i+1}) \text{ for } 1 \leq i < n \rbrace.
\end{align*}
We let $d_0^{(1)} = s$, $d_1^{(1)} = r$, and for $n \geq 2$ we define
\begin{align*}
d_i^{(n)}(g_1, \hdots, g_n) =
\begin{cases}
(g_2, \hdots, g_n) & i = 0,\\
(g_1, \hdots, g_{i-1}, g_i g_{i+1}, g_{i+2}, \hdots, g_n) & 1 \leq i \leq n - 1\\
(g_1, \hdots, g_{n-1}) & i = n.
\end{cases}
\end{align*}
Since $G$ is \'etale, the maps $d_i^{(n)}$ are local homeomorphisms. We use these maps to construct a chain complex. For $n \geq 1$, we define $\partial_n: C_c(G^{(n)}, \mathbb{Z}) \rightarrow C_c(G^{(n-1)}, \mathbb{Z})$ by
\begin{align*}
\partial_n = \sum_{i=0}^n (-1)^i (d_i^{(n)})_\ast.
\end{align*}
We set $\partial_0: C_c(G^{(0)}, \mathbb{Z}) \rightarrow 0$. The sequence
$$
\begin{tikzcd}
0 &
\arrow{l}{}[swap]{\partial_0} C_c(G^{(0)}, \mathbb{Z}) &
\arrow{l}{}[swap]{\partial_1} C_c(G, \mathbb{Z}) &
\arrow{l}{}[swap]{\partial_2} C_c(G^{(2)}, \mathbb{Z}) &
\arrow{l}{}[swap]{\partial_3}\cdots
\end{tikzcd}
$$
(also denoted $(C_c(G^{(\ast)}, \mathbb{Z}), \partial_\ast)$)  is a chain complex. This is true because, for $n \geq 1$,
\begin{align*}
\partial_n \partial_{n+1} = \sum_{i=0}^n \sum_{j=0}^{n+1}(-1)^{i+j} (d_i^{(n)} d_j^{(n+1)})_\ast = 0,
\end{align*}
which can be proved by showing that $d_i^{(n)} d_j^{(n+1)} = d_{j-1}^{(n)} d_i^{(n+1)}$ for all $0 \leq i < j \leq n + 1$, dividing the prove into different cases.

\begin{definition}
\label{def:gpdhomology}
For $n \geq 0$, let $H_n(G)$ be the $n^{\text{th}}$ homology group of the chain complex $(C_c(G^{(\ast)}, \mathbb{Z}), \partial_\ast)$. In addition, we define the set of positive elements
\begin{align*}
H_0(G)^+ = \lbrace [f] \in H_0(G) : f(x) \geq 0 \text{ for all }x \in G^{(0)} \rbrace.
\end{align*}
The operations on $H_0(G)$ are given by $[f_1] + [f_2] = [f_1 + f_2]$ and $-[f] = [-f]$.
\end{definition}

 The group $H_0(G)$ is not necessarily an ordered group because the intesection $H_0(G)^+ \cap (- H_0(G)^+)$ may be different from $\lbrace 0 \rbrace$. See \cite[Remark 3.2]{Matui}.

 Given a homomorphism $\rho: G \rightarrow H$ between \'etale groupoids, we define $\rho^{(0)} : G^{(0)} \rightarrow H^{(0)}$ by $\rho^{(0)} = \rho \vert_{G^{(0)}}$. For $n \geq 1$, we define the map $\rho^{(n)}: G^{(n)} \rightarrow H^{(n)}$ by
 \begin{align*}
\rho^{(n)}(g_1, \hdots, g_n) = (\rho(g_1), \hdots, \rho(g_n)),
\text{\hspace{10pt}for $(g_1, \hdots, g_n) \in G^{(n)}$.}
\end{align*}

We prove the lemma below, stated in \cite[Subsection 3.2]{Matui} for \'etale groupoids.

\begin{lemma}
\label{lemma:rhon-equivalent}
 
Let $G, H$ be locally compact étale groupoids and let $\rho: G \rightarrow H$ be a homomorphism. Then the following are equivalent:
\begin{enumerate}[(i)]
\item $\rho^{(0)}$ is a local homeomorphism,
\item $\rho$ is a local homeomorphism,
\item $\rho^{(n)}$ is a local homeomorphism for all $n \in \mathbb{N}$.
\end{enumerate}
\end{lemma}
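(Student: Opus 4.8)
The plan is to prove the chain of equivalences (i) $\Rightarrow$ (ii) $\Rightarrow$ (iii) $\Rightarrow$ (i), exploiting the fact that a local homeomorphism is exactly a map that is both open and (locally) injective, and that all the maps $\rho^{(n)}$ for $n \geq 1$ are built out of $\rho$ and the structure maps $r,s$ of the étale groupoids, which are already local homeomorphisms. The implication (iii) $\Rightarrow$ (ii) is immediate by taking $n=1$ in the definition of $\rho^{(n)}$, and (ii) $\Rightarrow$ (i) follows because $G^{(0)}$ is an open (and closed) subset of $G$ in an étale groupoid, so $\rho^{(0)} = \rho|_{G^{(0)}}$ is the restriction of a local homeomorphism to an open set, hence a local homeomorphism onto its image --- I should note that $H^{(0)}$ is also open in $H$ and $\rho$ maps $G^{(0)}$ into $H^{(0)}$ (since $\rho$ is a groupoid homomorphism it carries units to units), so this restriction really does land in the right codomain with the right topology.

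The substantive direction is (i) $\Rightarrow$ (iii). First I would prove (i) $\Rightarrow$ (ii): given $g \in G$, pick an open bisection $U \ni g$ (using that $G$ is étale) so that $r|_U$ and $s|_U$ are homeomorphisms onto open subsets of $G^{(0)}$. Then on $U$ we can write $\rho|_U$ as a composite involving $\rho^{(0)}$: concretely, since $\rho$ is a homomorphism, $\rho(g)$ is determined by its range and source together with the bisection structure, and one checks that $\rho$ maps the bisection $U$ to an open bisection of $H$ with $r(\rho(g)) = \rho^{(0)}(r(g))$. More carefully: shrink $U$ so that $\rho(U)$ is contained in an open bisection $V$ of $H$; then $r|_V^{-1} \circ \rho^{(0)} \circ r|_U = \rho|_U$, expressing $\rho|_U$ as a composite of local homeomorphisms, hence a local homeomorphism. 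For (ii) $\Rightarrow$ (iii), I would argue by induction on $n$, or directly observe that $G^{(n)}$ carries the subspace topology from $G^n$ and is locally homeomorphic via any of the face maps to lower composable tuples; the cleanest approach is to cover a tuple $(g_1,\dots,g_n)$ by a product $U_1 \times \cdots \times U_n$ of open bisections intersected with $G^{(n)}$, on which the coordinates $g_2,\dots,g_n$ are determined by $g_1$ via the structure maps, so $G^{(n)}$ is locally homeomorphic to an open subset of $G$ (via $g_1$); under this identification $\rho^{(n)}$ corresponds locally to $\rho^{(1)} = \rho$, which is a local homeomorphism by (ii).

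The main obstacle --- or at least the point requiring the most care --- will be the bookkeeping in (i) $\Rightarrow$ (ii): one must verify that a groupoid homomorphism $\rho$ sends an open bisection of $G$ into an open bisection of $H$ after suitable shrinking, and that the relation $r \circ \rho = \rho^{(0)} \circ r$ (which holds pointwise since $\rho$ respects ranges) genuinely lets one recover $\rho|_U$ as a composite of homeomorphisms between open sets. This is where étaleness of both $G$ and $H$ is essential: without it, the "inverse range map on a bisection'' $r|_V^{-1}$ need not exist as a continuous map. I would also make sure the $n=0$ case of $\rho^{(n)}$ in statement (iii) is handled consistently with the convention $\rho^{(0)} = \rho|_{G^{(0)}}$, so that (iii) $\Rightarrow$ (i) is literally a specialisation. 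Everything else is routine manipulation of open bisections and compositions of local homeomorphisms, so the proof should be short once the bisection argument in (i) $\Rightarrow$ (ii) is set up correctly.
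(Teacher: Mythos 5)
Your proposal is correct and follows essentially the same route as the paper: the substance of (i) $\Rightarrow$ (ii) is the local factorisation $\rho\vert_{\mathcal{U}} = r\vert_{V}^{-1} \circ \rho^{(0)} \circ r\vert_{\mathcal{U}}$ on suitably shrunk open bisections, and (ii) $\Rightarrow$ (iii) is the same bookkeeping with products of bisections (your ``first coordinate determines the tuple'' identification is just a cleaner phrasing of the paper's argument that $\rho^{(n)}$ is continuous, injective and open on $(\mathcal{U}_1 \times \cdots \times \mathcal{U}_n) \cap G^{(n)}$). The only point worth making explicit is that the shrinking step ``$\rho(\mathcal{U}) \subset V$'' uses the standing assumption that the homomorphism $\rho$ is continuous (as in Matui's setting); the paper relies on this implicitly as well, so this is a shared convention rather than a gap in your argument.
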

\begin{proof}
    The implication $\mathrm{(iii)} \Rightarrow \mathrm{(ii)} \Rightarrow \mathrm{(i)}$ is straightforward. To prove $\mathrm{(i)} \Rightarrow \mathrm{(ii)}$, first we show that $\rho$ is an open map. Let $\mathcal{U}$ be an open bisection such that the restriction $\rho\vert_{r(\mathcal{U})}^{(0)}: r(\mathcal{U}) \rightarrow \rho(r(\mathcal{U}))$ is a homeomorphism. Let $V \subset \overline{V} \subset \mathcal{U}$ be an open set, and let $g_n$ be a sequence in $\mathcal{U} \setminus V$ converging to an $g \in \mathcal{U} \setminus V$. Since $\mathcal{U}$ is \'etale, we have $r(g) \notin r(V)$. Since $\rho$ is injective in $\mathcal{U}$, we have $r \circ \rho(g) \notin \rho \circ r(\mathcal{V})$, which implies that $\rho(g) \notin \rho(V)$. Then $\rho(V)$ is open. It is straightforward that $\rho$ is injective on open bisections. Let $\mathcal{U}_1 \subset G$, $V_1 \subset H$ be open bisections such that $\rho(\mathcal{U}_1) \subset V_1$. One can show that, locally, $\rho$ can be identified with the map $r\vert_{V_1}^{-1} \circ \rho \circ r\vert_{\mathcal{U}_1}$.

    To show $\mathrm{(ii)} \Rightarrow \mathrm{(iii)}$ for $n \geq 2$, choose open bisections $\mathcal{U}_1, \dots, \mathcal{U}_n$ such that $r(\mathcal{U}_i) = s(\mathcal{U}_{i+1})$ for $i = 1, \dots, n-1$, and show that $\rho^n(\mathcal{U} \cap G^n) = \rho^n(\mathcal{U}) \cap H^{(n)}$, where
    \begin{align*}
        \rho^{n} = \underbrace{\rho \times \dots \times \rho}_{\text{$n$ times}}.
    \end{align*}
    Since $\rho^{(n)}$ is continuous and injective on $\mathcal{U} \cap G^{(n)}$, then $\rho$ is a local homeomorphism.
\end{proof}

\begin{proposition}
    Let $\rho : G \rightarrow H$ be a homomorphism that is also a local homeomorphism. Then, for each $n \geq 0$, $\rho$ induces a homomorphism $H_n(\rho) : H_n(G) \rightarrow H_n(H)$ defined by
    \begin{align*}
        H_n(\rho)([f]) = [\rho_\ast^{(n)}(f)]
        \hspace{7pt} \text{for } f \in C_c(G^{(n)}, \mathbb{Z}).
    \end{align*}
\end{proposition}

\begin{remark}
\label{remark:inducedhomomorphismH0}
In this paper, for a function $\tau: C_c(G^{(0)}, \mathbb{Z}) \rightarrow C_c(H^{(0)}, \mathbb{Z})$ such that $f_1 \sim f_2$ in $H_0(G)$ imples $\tau(f_1) \sim \tau(f_2)$ in $H_0(H)$, we use the notation $[\tau]$ to denote the function $[\tau]: H_0(G) \rightarrow H_0(H)$ given by $[\tau]([f]) = [\tau(f)]$. Moreover, for a homomorphism $\rho: G \rightarrow H$ that is also a local homeomorphism, we have $H_0(\rho) = [\rho_\ast^{(0)} ]$.    
\end{remark}

An important tool in the study of homology groups is the notion of homological similarity, that can be applied to show that two groupoids have isomorphic homology groups.

\begin{definition}
\begin{enumerate}[(i)]
\item Two homomorphisms $\rho, \sigma$ from $G$ to $H$ are said to be \emph{similar} if there exists a continuous map $\theta: G^{(0)} \rightarrow H$ such that
\begin{align}
\label{eqn:similar}
\theta(r(g)) \rho(g) = \sigma(g)\theta(s(g)),
\end{align}
for all $g \in G$.
\item $G$ and $H$ are said to be \emph{homologically similar} if there exist homomorphisms $\rho: G \rightarrow H$ and $\sigma: H \rightarrow G$ that are local homeomorphisms and such that $\sigma \circ \rho$ is similar to $\mathrm{id}_G$ and $\rho \circ \sigma$ is similar to $\mathrm{id}_H$.
\end{enumerate}
\end{definition}

\begin{proposition}
\label{prop:similarhomomorphisms}
\cite[Proposition 3.5]{Matui}
Let $n \geq 1$. If $\rho, \sigma: G \rightarrow H$ are similar homomorphisms and local homeomorphisms, then $H_n(\rho) = H_n(\sigma)$.
\end{proposition}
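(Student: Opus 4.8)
The plan is to imitate the classical proof that homotopic maps induce the same morphism on simplicial (or singular) homology: I will construct a \emph{chain homotopy} between the chain maps $(\rho^{(\bullet)})_\ast$ and $(\sigma^{(\bullet)})_\ast$ on the complex $(C_c(G^{(\ast)},\mathbb{Z}),\partial_\ast)$. That is, I will produce group homomorphisms $h_n\colon C_c(G^{(n)},\mathbb{Z})\to C_c(H^{(n+1)},\mathbb{Z})$, with $h_{-1}=0$, satisfying
\begin{align*}
\partial_{n+1}\circ h_n+h_{n-1}\circ\partial_n=(\rho^{(n)})_\ast-(\sigma^{(n)})_\ast\qquad(n\ge 0).
\end{align*}
Once this holds, any cycle $f\in C_c(G^{(n)},\mathbb{Z})$ (i.e.\ $\partial_n f=0$) satisfies $(\rho^{(n)})_\ast(f)-(\sigma^{(n)})_\ast(f)=\partial_{n+1}(h_n f)\in\operatorname{im}\partial_{n+1}$, so $H_n(\rho)([f])=[(\rho^{(n)})_\ast(f)]=[(\sigma^{(n)})_\ast(f)]=H_n(\sigma)([f])$, which is exactly the assertion.

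To define the $h_n$ I first extract a consequence of similarity: inserting a unit $u\in G^{(0)}$ into \eqref{eqn:similar} (so that $\rho(u),\sigma(u)$ are units) gives $r(\theta(u))=\sigma^{(0)}(u)$ and $s(\theta(u))=\rho^{(0)}(u)$. In particular $r\circ\theta=\sigma^{(0)}$, and since $\sigma^{(0)}$ is a local homeomorphism, so is $\theta$; more precisely, near each point $\theta$ agrees with $(r\vert_B)^{-1}\circ\sigma^{(0)}$ for a suitable open bisection $B$ of $H$, hence maps a neighbourhood homeomorphically onto an open subset of $B$. Then, for $0\le i\le n$, I set
\begin{align*}
P_i^{(n)}(g_1,\dots,g_n)=\bigl(\sigma(g_1),\dots,\sigma(g_i),\ \theta(v_i),\ \rho(g_{i+1}),\dots,\rho(g_n)\bigr),
\end{align*}
with $v_0=r(g_1)$ and $v_i=s(g_i)$ for $i\ge 1$ (so $P_0^{(0)}=\theta$). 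The identities $r\circ\theta=\sigma^{(0)}$, $s\circ\theta=\rho^{(0)}$ show that $P_i^{(n)}$ really takes values in $H^{(n+1)}$, and I claim each $P_i^{(n)}$ is a local homeomorphism onto an open subset of $H^{(n+1)}$. To check this, about a point of $G^{(n)}$ I choose compatible open bisections $V_1,\dots,V_n$ of $G$ around the $g_j$, small enough that $\rho^{(0)},\sigma^{(0)}$ are injective on the relevant source/range sets and that $\theta$ maps the set of $v_i$-values into a single bisection $B$; then $\sigma(V_1),\dots,\sigma(V_i)$, the open subset $(r\vert_B)^{-1}(\sigma^{(0)}(s(V_i)))$ of $B$, and $\rho(V_{i+1}),\dots,\rho(V_n)$ are compatible open bisections of $H$, and, $G$ and $H$ being \'etale, a composable tuple lying in a product of compatible bisections is determined by the range of its first coordinate; in these local coordinates $P_i^{(n)}$ becomes merely a restriction of $\sigma^{(0)}$. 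Hence the pushforward $(P_i^{(n)})_\ast\colon C_c(G^{(n)},\mathbb{Z})\to C_c(H^{(n+1)},\mathbb{Z})$ is well defined, and I put $h_n=\sum_{i=0}^n(-1)^i(P_i^{(n)})_\ast$.

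The verification of the chain-homotopy identity is then the same bookkeeping as for the classical prism operator. Expanding $\partial_\ast=\sum_j(-1)^j(d_j^{(\ast)})_\ast$ and using functoriality of pushforward, I compare the compositions $d_j^{(n+1)}\circ P_i^{(n)}$ with $P_{i'}^{(n-1)}\circ d_j^{(n)}$. The ``away from $\theta$'' relations $d_j^{(n+1)}P_i^{(n)}=P_{i-1}^{(n-1)}d_j^{(n)}$ for $j<i$ and $d_j^{(n+1)}P_i^{(n)}=P_i^{(n-1)}d_{j-1}^{(n)}$ for $j>i+1$ hold by inspection, and the corresponding terms cancel against $h_{n-1}\circ\partial_n$. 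The two ``diagonal'' faces satisfy $d_{i+1}^{(n+1)}P_i^{(n)}=d_{i+1}^{(n+1)}P_{i+1}^{(n)}$ and $d_i^{(n+1)}P_i^{(n)}=d_i^{(n+1)}P_{i-1}^{(n)}$ — this is exactly where the full identity \eqref{eqn:similar}, in the form $\theta(v_i)\rho(g_{i+1})=\sigma(g_{i+1})\theta(v_{i+1})$, is used — so they cancel in pairs in the alternating sum, leaving only $d_0^{(n+1)}P_0^{(n)}=\rho^{(n)}$ (with sign $+1$) and $d_{n+1}^{(n+1)}P_n^{(n)}=\sigma^{(n)}$ (with sign $-1$). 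Summing, $\partial_{n+1}h_n+h_{n-1}\partial_n=(\rho^{(n)})_\ast-(\sigma^{(n)})_\ast$, which completes the construction.

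The step I expect to be the main obstacle is the claim in the second paragraph that each $P_i^{(n)}$ is a local homeomorphism onto an open set: this is what makes the operations $(P_i^{(n)})_\ast$ admissible on the chain complexes $C_c(G^{(\ast)},\mathbb{Z})$, and it is the only point at which the hypothesis that $\rho$ and $\sigma$ — and therefore $\theta$ — are local homeomorphisms, rather than merely continuous, is genuinely needed. The combinatorial identities in the last paragraph are routine and word-for-word the classical prism computation, so in a complete write-up I would verify one or two representative cases and cite the simplicial argument for the rest.
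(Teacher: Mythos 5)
Your chain-homotopy (prism operator) argument is correct: the maps $P_i^{(n)}$ are well defined into $H^{(n+1)}$ and are local homeomorphisms by the bisection/local-coordinate argument, the diagonal face identity is exactly where \eqref{eqn:similar} enters, and the resulting identity $\partial_{n+1}h_n+h_{n-1}\partial_n=(\rho^{(n)})_\ast-(\sigma^{(n)})_\ast$ gives $H_n(\rho)=H_n(\sigma)$. The paper itself gives no proof, quoting \cite[Proposition 3.5]{Matui}, and your argument is essentially the standard one underlying that cited result, so there is nothing to flag beyond the routine verifications you already acknowledge.
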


\begin{corollary}
\label{corollary:isoHn}
\cite[Proposition 3.5]{Matui}
If $G$ and $H$ are homologically similar, then they have isomorphic homology groups. In particular, if $n \geq 1$, $\rho: G \rightarrow H$ and $\sigma: H \rightarrow G$ are local homeomorphisms and homomorphisms satisfying \eqref{eqn:similar}, then $H_n(\rho): H_n(G) \rightarrow H_n(H)$ is an isomorphism with inverse $H_n(\sigma)$. Moreover, both $H_0(\rho)$ and $H_0(\sigma)$ preserve positive elements.
\end{corollary}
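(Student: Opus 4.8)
The plan is to derive the corollary formally from the functoriality of groupoid homology together with Proposition~\ref{prop:similarhomomorphisms}, with a short direct argument to cover degree~$0$, which Proposition~\ref{prop:similarhomomorphisms} leaves out. First I would record that each $H_n$ is a functor on the category whose objects are locally compact, Hausdorff, \'etale groupoids and whose morphisms are those homomorphisms that are also local homeomorphisms. Indeed, for composable such morphisms $\phi\colon G\to H$ and $\psi\colon H\to K$ one has $(\psi\circ\phi)^{(n)}=\psi^{(n)}\circ\phi^{(n)}$ for all $n$, while for local homeomorphisms $a\colon A\to B$ and $b\colon B\to C$ the equality $(b\circ a)_\ast=b_\ast\circ a_\ast$ holds by reindexing a finite sum over fibres; combining these with the formula $H_n(\phi)([f])=[\phi^{(n)}_\ast(f)]$ gives $H_n(\psi\circ\phi)=H_n(\psi)\circ H_n(\phi)$, and $(\mathrm{id})_\ast=\mathrm{id}$ gives $H_n(\mathrm{id}_G)=\mathrm{id}_{H_n(G)}$.

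Second, I would feed the similarity data into this bookkeeping. Suppose $\rho\colon G\to H$ and $\sigma\colon H\to G$ are homomorphisms and local homeomorphisms with $\sigma\circ\rho$ similar to $\mathrm{id}_G$ and $\rho\circ\sigma$ similar to $\mathrm{id}_H$; note $\sigma\circ\rho$ and $\rho\circ\sigma$ are again homomorphisms and local homeomorphisms. For $n\geq1$, Proposition~\ref{prop:similarhomomorphisms} gives $H_n(\sigma\circ\rho)=H_n(\mathrm{id}_G)=\mathrm{id}_{H_n(G)}$ and $H_n(\rho\circ\sigma)=H_n(\mathrm{id}_H)=\mathrm{id}_{H_n(H)}$, so by functoriality $H_n(\sigma)\circ H_n(\rho)=\mathrm{id}$ and $H_n(\rho)\circ H_n(\sigma)=\mathrm{id}$; hence $H_n(\rho)$ is an isomorphism with inverse $H_n(\sigma)$, and in particular $H_n(G)\cong H_n(H)$.

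Third, I would treat degree~$0$ directly (equivalently, invoke the $n=0$ case of \cite[Proposition~3.5]{Matui}). If $\mu,\nu\colon G\to H$ are similar homomorphisms and local homeomorphisms, witnessed by a continuous $\theta\colon G^{(0)}\to H$ as in \eqref{eqn:similar}, then evaluating that identity on units shows $s\circ\theta=\mu^{(0)}$ and $r\circ\theta=\nu^{(0)}$; since $s\circ\theta$ is a local homeomorphism and $H$ is \'etale, on a neighbourhood of $u$ mapped by $\theta$ into an open bisection $B$ of $H$ through $\theta(u)$ one has $\theta=(s|_B)^{-1}\circ(s\circ\theta)$, a composite of a homeomorphism and a local homeomorphism, so $\theta$ is a local homeomorphism. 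Then $\theta_\ast\colon C_c(G^{(0)},\mathbb{Z})\to C_c(H,\mathbb{Z})$ is defined and $\partial_1\circ\theta_\ast=(s\circ\theta)_\ast-(r\circ\theta)_\ast=\mu^{(0)}_\ast-\nu^{(0)}_\ast$, so $\mu^{(0)}_\ast(f)-\nu^{(0)}_\ast(f)\in\mathrm{im}\,\partial_1$ for every $f$, i.e.\ $H_0(\mu)=H_0(\nu)$ (using $H_0(\mu)=[\mu^{(0)}_\ast]$ from Remark~\ref{remark:inducedhomomorphismH0}). The argument of the previous paragraph now applies with $n=0$ too, so $H_0(\rho)$ is an isomorphism with inverse $H_0(\sigma)$. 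Finally, positivity is immediate from $H_0(\rho)([f])=[\rho^{(0)}_\ast(f)]$ and $\rho^{(0)}_\ast(f)(y)=\sum_{u\colon\rho(u)=y}f(u)\geq0$ whenever $f\geq0$: thus $H_0(\rho)(H_0(G)^+)\subseteq H_0(H)^+$, and the same for $\sigma$ together with $H_0(\sigma)=H_0(\rho)^{-1}$ upgrades these to mutually inverse bijections $H_0(G)^+\leftrightarrow H_0(H)^+$.

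The only non-routine step should be the degree-$0$ case, and within it the verification that the intertwiner $\theta$ is a local homeomorphism, so that $\theta_\ast$ is defined and supplies the degree-$0$ chain homotopy; everything else is functoriality bookkeeping and a one-line sign computation.
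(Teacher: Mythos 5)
Your argument is correct. The paper itself offers no proof of this corollary (it is quoted from \cite[Proposition 3.5]{Matui}), and what you write is essentially the standard argument behind that citation: functoriality of $H_n$ on homomorphisms that are local homeomorphisms, plus Proposition \ref{prop:similarhomomorphisms} applied to $\sigma\circ\rho\sim\mathrm{id}_G$ and $\rho\circ\sigma\sim\mathrm{id}_H$, plus the observation that positivity of $H_0(\rho)$ is immediate from the fibre-sum formula for $\rho^{(0)}_\ast$. The genuinely useful extra content in your write-up is the degree-zero case: since Proposition \ref{prop:similarhomomorphisms} is stated in the paper only for $n\geq 1$, the claim that $H_0(\rho)$ is an isomorphism (which the paper later uses in Proposition \ref{prop:similarityGcc0}) needs exactly the chain-homotopy computation $\partial_1\circ\theta_\ast=\mu^{(0)}_\ast-\nu^{(0)}_\ast$ you supply, and your verification that the intertwiner $\theta$ is automatically a local homeomorphism (via $s\circ\theta=\mu^{(0)}$ and \'etaleness of $H$), so that $\theta_\ast$ is even defined, is precisely the point that is easy to overlook; evaluating \eqref{eqn:similar} at units to get $s\circ\theta=\mu^{(0)}$, $r\circ\theta=\nu^{(0)}$ is done correctly.
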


\section{Homology for AF groupoids}
\label{section:homology}

In this section, we study the properties of the homology groups of AF groupoids. Like K-theory, homology groups are continuous with respect to inductive limits. We start this subsection by looking at the building blocks of AF groupoids, i.e., groupoids of the form $R(\sigma)$ and finding a property that helps us find the equivalence in $H_0(R(\sigma))$.

\begin{lemma}
\label{lemma:traceH0Rsigma}
Let $\sigma: X \rightarrow Y$ be a surjective local homeomorphism. Then two functions $f_1, f_2 \in C_c(X, \mathbb{N})$ are equivalent in $H_0(R(\sigma))$ if, and only if,
\begin{align}
\label{eqn:equivconditionH0Rsigma}
\sum_{u: \sigma(u) = \sigma(x)} f_1(u)
= \sum_{u: \sigma(u) = \sigma(x)} f_2(u)
\hspace{15pt}\text{for all $x \in X$.}
\end{align}
\end{lemma}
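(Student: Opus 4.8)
I want to characterise when $f_1, f_2 \in C_c(X, \mathbb{N})$ are equivalent in $H_0(R(\sigma))$, i.e.\ when $f_1 - f_2 \in \operatorname{im}\partial_1$ for the chain complex of $G = R(\sigma)$. Recall $\partial_1 = (d_1^{(1)})_\ast - (d_0^{(1)})_\ast = r_\ast - s_\ast$, where $r, s : R(\sigma) \to X$ are the range and source maps. So I need to describe $\operatorname{im}(r_\ast - s_\ast) \subset C_c(X,\mathbb{Z})$ explicitly, and then check that $f_1 - f_2$ lies in it precisely when \eqref{eqn:equivconditionH0Rsigma} holds. The guiding idea is that the quantity $F(x) := \sum_{u : \sigma(u) = \sigma(x)} f(u)$ — which is a $\sigma$-fibrewise sum, constant on fibres of $\sigma$ — should be exactly the invariant that survives in $H_0(R(\sigma))$.

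**Key steps.** First I would verify the direction ``$\Rightarrow$'': if $f_1 - f_2 = (r_\ast - s_\ast)(h)$ for some $h \in C_c(R(\sigma), \mathbb{Z})$, then summing over a fibre kills the difference. Concretely, for fixed $x$, $\sum_{u:\sigma(u)=\sigma(x)} (r_\ast h)(u) = \sum_{u:\sigma(u)=\sigma(x)} \sum_{v:(u,v)\in R(\sigma)} h(u,v) = \sum_{(u,v): \sigma(u)=\sigma(v)=\sigma(x)} h(u,v)$, and the same double sum (re-indexed by swapping the roles, using that $s_\ast h(v) = \sum_{u} h(u,v)$) equals $\sum_{u:\sigma(u)=\sigma(x)} (s_\ast h)(u)$; so the fibrewise sums of $r_\ast h$ and $s_\ast h$ agree, giving \eqref{eqn:equivconditionH0Rsigma}. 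For the converse ``$\Leftarrow$'', suppose \eqref{eqn:equivconditionH0Rsigma} holds. The natural candidate is to build $h$ supported on the ``graph of a section'' type pieces: cover $X$ by compact open sets $U_i$ on which $\sigma$ is injective and refine so that $f_1, f_2$ are constant on each piece and the $\sigma(U_i)$ behave well; then on each fibre one is comparing two finite multisets of points (weighted by $f_1$ resp.\ $f_2$) with equal total weight, and one can write the difference of the two corresponding indicator-type functions as a telescoping sum of terms $1_{\{(u,v)\}}$-type bisection functions $1_{\mathcal{U}} $ with $r_\ast 1_{\mathcal{U}} - s_\ast 1_{\mathcal{U}} = 1_A - 1_B$ moving ``one unit of mass'' from $u$ to $v$ within a common fibre. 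Patching these over a finite subcover (using compact support and total disconnectedness to get a finite partition into compact open bisections) expresses $f_1 - f_2$ as $\partial_1$ of a compactly supported integer function on $R(\sigma)$.

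**Main obstacle.** The delicate part is the converse, specifically the bookkeeping needed to realise ``equal fibrewise sums'' as an actual element of $\operatorname{im}\partial_1$ in a way that respects \emph{compact support} and continuity. Pointwise, on a single fibre, matching up mass is trivial combinatorics; the work is in choosing a single finite partition of (a neighbourhood of) $\operatorname{supp}(f_1) \cup \operatorname{supp}(f_2)$ into compact open sets on which $\sigma$ is injective, $f_1$ and $f_2$ are locally constant, and such that the fibre-matching can be done uniformly across the partition — then the transport functions $h$ assemble to something in $C_c(R(\sigma),\mathbb{Z})$. Here total disconnectedness and the local-homeomorphism property of $\sigma$ (so fibres are discrete and locally the $U_i$ map homeomorphically) are exactly what make this finite combinatorial reduction possible. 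I would handle the general integer-valued case by first reducing to $\mathbb{N}$-valued $f_1, f_2$ as in the statement, and note the ``only if'' already shows the fibrewise sum is a well-defined homomorphism $H_0(R(\sigma)) \to C_c(Y,\mathbb{Z})$, which is the form in which this lemma will be used (it is essentially the $R(\sigma)$-case of the map $\operatorname{tr}_\varphi$ from the introduction).
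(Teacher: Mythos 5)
Your forward direction is essentially the paper's argument: write $f_1-f_2=\partial_1 h$, sum over a fibre of $\sigma$, and observe that the two double sums coming from $r_\ast h$ and $s_\ast h$ over the fibre square coincide (one small slip: with the paper's conventions $d_0^{(1)}=s$, $d_1^{(1)}=r$, so $\partial_1=s_\ast-r_\ast$, but the sign is immaterial for $\operatorname{im}\partial_1$). Your converse, however, takes a genuinely different route. The paper fixes a continuous section $\varphi$ of $\sigma$, introduces the fibrewise-sum operator $T(f)(x)=1_{X_\varphi}(x)\sum_{u:\sigma(u)=\sigma(x)}f(u)$, and writes down a single global chain $F(x,y)=f_1(y)f_2(x)/T(f_1)(\varphi(\sigma(x)))$ on a suitable compact open set, checking $\sum_{y:\sigma(y)=\sigma(x)}F(x,y)=f_2(x)$ and $\sum_{y:\sigma(y)=\sigma(x)}F(y,x)=f_1(x)$ directly from the hypothesis; you instead take a finite cover of $\operatorname{supp}f_1\cup\operatorname{supp}f_2$ by disjoint compact open sets on which $\sigma$ is injective and $f_1,f_2$ are constant, refine the base (atoms of the Boolean algebra generated by the images) so that over each base piece the sheets map homeomorphically onto it, and then write $f_1-f_2$ as a telescoping sum of $\partial_1$ applied to indicators of compact open bisections of $R(\sigma)$, each moving one unit of mass between sheets over a common base piece. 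What the paper's formula buys is brevity: one closed expression and a two-line verification. What your construction buys is that the chain is manifestly $\mathbb{Z}$-valued with control of its (compact open) support and requires no section; by contrast the paper's $F$ is a ratio, so its membership in $C_c(R(\sigma),\mathbb{Z})$ is not automatic, and your partition-and-transport argument is in that respect the more robust one — provided you carry out in full the uniform-refinement step you correctly flag as the main obstacle, which is exactly where total disconnectedness and the local homeomorphism property of $\sigma$ are used.
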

\begin{proof}
First, let us define a map $T: C_c(X, \mathbb{N}) \rightarrow C_c(X, \mathbb{Z})$ which is going to be used in the proof. Fix a continuous section $\varphi: Y \rightarrow X$ of $\sigma$. Let $T: C_c(X, \mathbb{N}) \rightarrow C_c(X, \mathbb{N})$ be given by
\begin{align}
\label{eqn:Tfx}
T(f)(x) = 1_{X_\varphi}(x) \sum_{u: \sigma(u) = \sigma(x)} f(u),
\end{align}
where $X_\varphi = \varphi(Y)$. To show that $T$ is well-defined, Fist we consider functions of the form $1_V$, where $V \subset X$ be a compact open subset on which $\sigma$ is injective. We claim that $T(1_V) = 1_{\varphi(\sigma(V))}$. Let $x \in X$. Suppose that $T(1_V)(x) \neq 0$. Equation \eqref{eqn:Tfx} implies that $x \in X_\varphi$ and that there exists an $u \in X_\varphi$ such that $u \in V$ and $\sigma(u) = \sigma(x)$. Since $\sigma$ is injective on $V$, such $u$ is unique, which implies that $x = \varphi(\sigma(u))$. Thus $1_{\varphi(\sigma(V))}(x) = 1_V(x)$.

Now suppose that $T(1_V)(x) = 0$. Then $x \notin X_\varphi$ or $\sum_{u: \sigma(u) = \sigma(x)} 1_V(u) = 0$. If $x \notin X_\varphi$, then $x \notin\varphi(\sigma(V))$ because $\varphi(\sigma(V)) \subset X_\varphi$. Then $1_{\varphi(\sigma(V))}(x) = 0$. If $\sum_{u: \sigma(u) = \sigma(x)} 1_V(u) = 0$, then $\varphi(\sigma(x)) \notin V$, which implies that $T(1_V)(x) = 1_{\varphi(\sigma(V))}(x) = 0$. Therefore $T(1_V) = 1_{\varphi(\sigma(V))}$.

Note that every function $f \in C_c(X, \mathbb{N})$ is spanned by  function of the form $1_V$, as considered above. Since $T$ is a semigroup homomorphism, then $T(f) \in C_c(X, \mathbb{N})$. Therefore, $T$ is well-defined.

Let $f_1, f_2 \in C_c(X, \mathbb{N})$ be equivalent in $H_0(R(\sigma))$. Then there exists $F \in C_c(R(\sigma), \mathbb{Z})$ such that, for all $u \in X$,
\begin{align*}
f_1(u)
&= f_2(u) + \partial_1 F(u) \\
&= f_2(u) + s_\ast(F)(u) - r_\ast(F)(u) \\
&= f_2(u) + \sum_{v: \sigma(v) = \sigma(u)} [F(v,u) - F(u,v)].
\end{align*}
Fix $x \in X$. Then
\begin{align*}
\sum_{u: \sigma(u) = \sigma(x)} f_1(u)
&= \sum_{u: \sigma(u) = \sigma(x)} f_2(u) + \sum_{\substack{u,v \\ \sigma(u) =  \sigma(x) \\ \sigma(v) = \sigma(x)}} (F(v,u) - F(u,v)) \\
&= \sum_{u: \sigma(u) = \sigma(x)} f_2(u) + \sum_{\substack{u,v \\ \sigma(u) =  \sigma(x) \\ \sigma(v) = \sigma(x)}} F(v,u) - \sum_{\substack{u,v \\ \sigma(u) =  \sigma(x) \\ \sigma(v) = \sigma(x)}} (F(u,v).
\end{align*}
Note that the second and the third sums of the right-hand side are equal. Then \eqref{eqn:equivconditionH0Rsigma} holds.

Conversely, suppose that \eqref{eqn:equivconditionH0Rsigma} holds. Define the set
\begin{align*}
K = \lbrace x \in X : x \in \mathrm{supp\hphantom{.}}f_1 \cup \mathrm{supp\hphantom{.}} f_2 \text{\hspace{7pt} and \hspace{7pt}} T(f_1)(\varphi(\sigma(x))) \neq 0 \rbrace.
\end{align*}
This set is compact because it is the intesection of the compact set $\mathrm{supp\hphantom{.}}f_1 \cup \mathrm{supp\hphantom{.}} f_2$ and the closed set $(T(f_1) \circ \varphi \circ \sigma)^{-1}(\mathbb{Z} \setminus \lbrace 0 \rbrace)$.

Let $F \in C_c(R(\sigma), \mathbb{Z})$ be defined by
\begin{align*}
F(x,y)
&= \begin{cases}
\displaystyle\frac{f_1(y)f_2(x)}{T(f_1)(\varphi(\sigma(x)))}
& \text{if } (x,y) \in K \times \mathrm{supp\hphantom{.}}f_2,\\
0 &\text{otherwise.}
\end{cases}
\end{align*}
Then, for $x \in K$, we have
\begin{align}
\sum_{y: \sigma(y) = \sigma(x)} F(x,y)
&= \frac{f_2(x)}{T(f_1)(\varphi(\sigma(x)))} \sum_{y: \sigma(y) = \sigma(x)} f_1(y) \nonumber\\
&= \frac{f_2(x)}{T(f_1)(\varphi(\sigma(x)))} T(f_1)(\varphi(\sigma(x)))  \nonumber\\
&= f_2(x). \label{eqn:partialFf1}
\end{align}
If $x \in X \setminus K$, we have
\begin{align}
\label{eqn:partialFf1zero}
\sum_{y: \sigma(y) = \sigma(x)} F(x,y) = 0 = f_2(x).
\end{align}
By similar arguments, we have
\begin{align}
\label{eqn:partialFf2}
\sum_{y: \sigma(y) = \sigma(x)} {F(y,x)} = f_1(x)
\hspace{20pt}\text{for $x \in X$.}
\end{align}
The equations \eqref{eqn:partialFf1}, \eqref{eqn:partialFf1zero} and \eqref{eqn:partialFf2} imply that, for all $x \in X$, $f_1(x) = f_2(x) + \partial_1 F(x).$ Therefore $f_1 \sim f_2$. 
\end{proof}

The proof of the following lemma is straightforward, and this lemma will be applied in the proof of Theorem \ref{thm:main}.
\begin{lemma}
\label{lemma:partial1F}
Let $G = \displaystyle\lim_\rightarrow G_n$. For each $n$, let $\partial_1^{(n)}: C_c(G_{n}, \mathbb{Z}) \rightarrow C_c(G^{(0)}, \mathbb{Z})$ be defined by $\partial_1^{(n)} = (s\vert_{G_n})_\ast - (r\vert_{G_n})_\ast$. Analogously, let $\partial_1: C_c(G, \mathbb{Z}) \rightarrow C_c(G^{(0)}, \mathbb{Z})$ be given by $\partial_1 = s_\ast - r_\ast$. Let $F \in C_c(G_n, \mathbb{Z})$. Then for all $k \geq 0$, we have $\partial_1 F = \partial_1^{(n)} F = \partial_1^{(n+k)} F.$
\end{lemma}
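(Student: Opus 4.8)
The plan is to reduce the statement to a single observation: pushing forward a function supported in the open subgroupoid $G_n$ along the range or source map only involves that subgroupoid, so the value does not depend on which ambient groupoid ($G_m$ for $m\geq n$, or $G$ itself) we compute it in. The paper already flags the lemma as straightforward, and indeed the argument is pure bookkeeping around extension by zero.

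\emph{Step 1: fix the identifications.} In the inductive system $G = \lim_\rightarrow G_n$ each $G_n$ is an open subgroupoid of $G$, and of $G_{n+k}$, with $G_n^{(0)} = G^{(0)}$. Hence a function $F \in C_c(G_n, \mathbb{Z})$ extends by zero to a function on $G$: it is continuous, because it agrees with a continuous function on the open set $G_n$ and with $0$ on the open set $G \setminus \mathrm{supp}(F)$, and these two open sets cover $G$; and it is compactly supported, with support $\mathrm{supp}(F)$, which is compact in $G_n$ and therefore in $G$. This extension by zero is precisely the connecting map $C_c(G_n,\mathbb{Z}) \to C_c(G_{n+1},\mathbb{Z})$ of the inductive system whose limit is $C_c(G,\mathbb{Z})$, so the symbols $\partial_1 F$ and $\partial_1^{(n+k)} F$ are understood as the result of applying the corresponding boundary map to this extension of $F$.

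\emph{Step 2: the fibrewise computation.} Let $m \geq n$, and write $r_m = r\vert_{G_m}$, with the convention $G_\infty = G$. Since $G_m$ is étale, $r_m$ is a local homeomorphism, so for $F \in C_c(G_n,\mathbb{Z})$ extended by zero and $u \in G^{(0)}$ the fibre $r_m^{-1}(u) \cap \mathrm{supp}(F)$ is finite and
\[
(r_m)_\ast(F)(u) = \sum_{g \in G_m,\ r(g) = u} F(g) = \sum_{g \in G_n,\ r(g) = u} F(g) = (r\vert_{G_n})_\ast(F)(u),
\]
because every term with $g \in G_m \setminus G_n$ vanishes. The identical computation with $s$ in place of $r$ gives $(s\vert_{G_m})_\ast F = (s\vert_{G_n})_\ast F$. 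Subtracting, $\partial_1^{(m)} F = (s\vert_{G_m})_\ast F - (r\vert_{G_m})_\ast F = (s\vert_{G_n})_\ast F - (r\vert_{G_n})_\ast F = \partial_1^{(n)} F$. Taking $m = n+k$ and $m = \infty$ yields $\partial_1^{(n+k)} F = \partial_1^{(n)} F = \partial_1 F$.

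There is no genuine obstacle here: the only points worth a word are that extension by zero along the open inclusion $G_n \hookrightarrow G_m$ stays in $C_c$, and that the fibre sums are finite (immediate from compact support of $F$ together with $r_m$, $s_m$ being local homeomorphisms); everything else is the identity above read off termwise. I would present it exactly in the order: (1) set up the extension-by-zero identification and note it is the connecting map of the system; (2) perform the one-line fibre computation for $r$ and for $s$; (3) subtract and specialise $m$.
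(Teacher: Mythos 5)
Your proof is correct, and it is exactly the straightforward extension-by-zero/fibrewise argument the paper has in mind when it omits the proof as "straightforward": since $F$ vanishes off the open subgroupoid $G_n$, every fibre sum defining $(r\vert_{G_m})_\ast F$ and $(s\vert_{G_m})_\ast F$ for $m\geq n$ (or for $G$ itself) reduces to the corresponding sum over $G_n$. Your added care about continuity of the zero-extension and finiteness of the fibre sums is appropriate and introduces no gap.
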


Now we show that $H_0$ is continuous with respect to inductive limits.

\begin{lemma}
\label{lemma:limH0}
Let $G = \displaystyle\lim_\rightarrow G_n$. Then $H_0(G) = \displaystyle\lim_\rightarrow H_0(G_n)$ as an inductive limit of groups. Moreover, if each $H_0(G_n)$ is an ordered group, then $H_0(G)$ is an ordered group.
\end{lemma}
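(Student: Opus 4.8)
The plan is to deduce this from the general fact that $H_0$ of a chain complex commutes with inductive limits, applied to the chain complexes $(C_c(G_n^{(\ast)},\mathbb Z),\partial_\ast)$ whose limit (degreewise) is $(C_c(G^{(\ast)},\mathbb Z),\partial_\ast)$. First I would observe that since $G=\bigcup_n G_n$ is an increasing union of open subgroupoids with common unit space, for each fixed $k$ the space $G^{(k)}$ of composable $k$-tuples is the increasing union of the open subsets $G_n^{(k)}$, and hence $C_c(G^{(k)},\mathbb Z)=\bigcup_n C_c(G_n^{(k)},\mathbb Z)$ as an increasing union of abelian groups (every compactly supported function on $G^{(k)}$ has support in some $G_n^{(k)}$ by compactness). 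Lemma \ref{lemma:partial1F} (and its evident analogue in higher degrees) guarantees that the boundary maps $\partial_k$ are compatible with the inclusions, so $(C_c(G^{(\ast)},\mathbb Z),\partial_\ast)=\varinjlim_n (C_c(G_n^{(\ast)},\mathbb Z),\partial_\ast)$ as a chain complex, with the inclusion-induced connecting maps $\iota_{n,k}\colon C_c(G_n^{(k)},\mathbb Z)\hookrightarrow C_c(G^{(k)},\mathbb Z)$ forming morphisms of complexes.

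Next I would invoke exactness of filtered colimits of abelian groups: taking homology commutes with directed colimits. Concretely, each class in $H_0(G)=C_c(G^{(0)},\mathbb Z)/\operatorname{im}\partial_1$ is represented by some $f\in C_c(G_n^{(0)},\mathbb Z)=C_c(G^{(0)},\mathbb Z)$ for a single $n$ (indeed the unit space is fixed), and $f$ lies in $\operatorname{im}\partial_1$ on $G$ iff $f=\partial_1 F$ for some $F\in C_c(G,\mathbb Z)$, iff $F\in C_c(G_m,\mathbb Z)$ for some $m\ge n$ and $f=\partial_1^{(m)}F$ by Lemma \ref{lemma:partial1F}, iff $f$ becomes zero in $H_0(G_m)$ for some $m$. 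This is exactly the statement that the natural maps $H_0(G_n)\to H_0(G)$ exhibit $H_0(G)$ as $\varinjlim_n H_0(G_n)$; I would phrase it via the universal property, checking that the square in Lemma \ref{lemma:homomorphisminductive} commutes and that the induced comparison map is bijective by the surjectivity and injectivity arguments just sketched.

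For the ordered-group statement, suppose each $H_0(G_n)$ is an ordered group, i.e.\ $H_0(G_n)^+\cap(-H_0(G_n)^+)=\{0\}$. Let $[f]\in H_0(G)^+\cap(-H_0(G)^+)$; choose $n$ with $f\in C_c(G_n^{(0)},\mathbb Z)$ and choose $F\in C_c(G_m,\mathbb Z)$ (some $m\ge n$) with $-f+\partial_1 F\ge 0$ pointwise, witnessing $[f]\in -H_0(G)^+$. Enlarging $m$ if necessary so that both the given nonnegative representative of $[f]$ and $-f+\partial_1^{(m)}F$ live over $G_m$, we get that the image of $[f]$ in $H_0(G_m)$ lies in $H_0(G_m)^+\cap(-H_0(G_m)^+)=\{0\}$, so $f=\partial_1^{(m)}F'$ for some $F'\in C_c(G_m,\mathbb Z)$, hence $[f]=0$ in $H_0(G)$ by Lemma \ref{lemma:partial1F}. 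Thus $H_0(G)$ is ordered.

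I expect the only mild subtlety to be bookkeeping the passage "a relation that holds in $G$ already holds in some $G_m$" — i.e.\ repeatedly using compact support to absorb finitely many functions into a single stage $G_m$ — which is precisely what Lemma \ref{lemma:partial1F} is set up to handle; everything else is the standard exactness of directed colimits.
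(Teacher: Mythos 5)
Your proposal is correct and follows essentially the same route as the paper: exploit the common unit space so every class is represented at a finite stage, use compact support to push any witness $F$ into some $C_c(G_m,\mathbb{Z})$, and invoke Lemma \ref{lemma:partial1F} to identify $\partial_1 F$ with $\partial_1^{(m)}F$, concluding either via the universal property or, equivalently, bijectivity of the comparison map. Your explicit argument that $H_0(G)^+\cap(-H_0(G)^+)=\{0\}$ by absorbing both positivity witnesses into a single stage $G_m$ is in fact spelled out more carefully than the paper's rather terse handling of the ordered-group statement, and is exactly what is needed.
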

\begin{proof}
    Let $[\hphantom{h}]_n$ and $[\hphantom{h}]$ be the equivalence classes in $H_0(G_n)$ and $H_0(G)$, respectively. Let $i_n: H_0(G_n) \rightarrow H_0(G_{n+1})$ and $\alpha_n: H_0(G_n) \rightarrow H_0(G)$ be the inclusion maps. It is straightforward to show that these maps are well-defined and satisfy $\alpha_{n+1} \circ i_n = \alpha_n$. Let $B$ be an group such that for all $n$, there is a homomorphism $\lambda_n: H_0(G_n) \rightarrow B$ satisfying
    \begin{align}
        \label{eqn:lambdanB}
        \lambda_n = \lambda_{n+1} \circ i_n.
    \end{align}
    If the groups $H_0(G_n)$. are ordered, suppose that $B$ is ordered and that the maps $\lambda_n$ preserves positive elements. Let $\lambda: H_0(G) \rightarrow B$ be defined by $\lambda([f]) = \lambda_0(f)$. The equations \eqref{eqn:lambdanB} imply that $\lambda$ is well-defined. Indeed, let $f_1, f_2 \in C_c(G^{(0)}, \mathbb{Z})$ be such that $f_1, f_2$ are equivalent in $H_0(G_n)$. Then there exists $F \in C_c(G_n, \mathbb{Z})$ such that $f_1 = f_2 + \partial_1 F$. Then there is an $n$ such that $F \in C_c(G_n, \mathbb{Z})$. Then, by Lemma \ref{lemma:partial1F}, $f_1 = f_2 + \partial_1^{(n)} F$. Applying \eqref{eqn:lambdanB} multiply times, we have
    \begin{align*}
        \lambda_0(f_1)
        = \lambda_1 \circ i_0(f_1) = \lambda_1([f_1]_1) = \lambda_2([f_1]_2) = \dots
        = \lambda_n([f_1]_n)
        = \lambda_n([f_2]_n)
        = \dots 
        = \lambda_0(f_2).
    \end{align*}
    Thus $\lambda$ is well-defined. It is straightforward to show that $\lambda$ is the only homomorphism such that $\lambda \circ \alpha_n = \lambda_n$. Therefore $H_0(G) = \displaystyle\lim_\rightarrow H_0(G_n)$. If each $H_0(G_n)$ is an ordered group, then $H_0(G)$ is an ordered group.
\end{proof}

Now we show that, for AF groupoids, the zeroth homology group is an ordered group.
\begin{proposition}
Let $G$ be an AF groupoid. Then $H_0(G)$ is an ordered group.
\end{proposition}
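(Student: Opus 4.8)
The plan is to reduce the statement to the elementary case, where Lemma \ref{lemma:traceH0Rsigma} does all the work, and then transport the conclusion through the inductive limit using Lemma \ref{lemma:limH0}. Writing the AF groupoid $G$ as the increasing union $G=\bigcup_n G_n$ of open elementary subgroupoids with $G_n^{(0)}=G^{(0)}$, we have $G=\lim_\rightarrow G_n$, and each $G_n$ is isomorphic to (hence for the purposes of computing $H_0$ may be taken equal to) a groupoid $R(\sigma_n)$ for a surjective local homeomorphism $\sigma_n$ between locally compact, Hausdorff, second countable, totally disconnected spaces; a groupoid isomorphism induces an isomorphism of the chain complexes carrying positive functions to positive functions, so $H_0(G_n)\cong H_0(R(\sigma_n))$ as ordered groups. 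By Lemma \ref{lemma:limH0} it therefore suffices to prove that $H_0(R(\sigma))$ is an ordered group for an arbitrary surjective local homeomorphism $\sigma\colon X\to Y$ between such spaces.

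Fix such a $\sigma$ and write $H:=H_0(R(\sigma))$ and $H^+:=H_0(R(\sigma))^+$. First I would dispose of the two routine axioms of an ordered group. If $[f_1],[f_2]\in H^+$, choose non-negative representatives $f_1,f_2\in C_c(X,\mathbb{N})$; then $[f_1]+[f_2]=[f_1+f_2]$ with $f_1+f_2\ge 0$, so $H^++H^+\subseteq H^+$. For $H=H^+-H^+$, given $[f]\in H$ with $f\in C_c(X,\mathbb{Z})$, decompose $f=f^+-f^-$ where $f^\pm=\max(\pm f,0)$; these lie in $C_c(X,\mathbb{N})$ because a continuous $\mathbb{Z}$-valued function on a totally disconnected space is locally constant, so $f^\pm$ are locally constant and compactly supported. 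Hence $[f]=[f^+]-[f^-]\in H^+-H^+$.

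The only substantive point is that $H^+\cap(-H^+)=\{0\}$, and here I would argue as follows. If $[f]$ lies in this intersection, then both $[f]$ and $-[f]$ admit non-negative representatives $f_1,f_2\in C_c(X,\mathbb{N})$, so $[f_1+f_2]=[f_1]+[f_2]=0=[0]$ in $H_0(R(\sigma))$, with $f_1+f_2$ and $0$ both in $C_c(X,\mathbb{N})$. Applying Lemma \ref{lemma:traceH0Rsigma} to the pair $f_1+f_2$ and $0$ gives
\[
\sum_{u:\sigma(u)=\sigma(x)}(f_1+f_2)(u)=0\qquad\text{for all }x\in X.
\]
Every summand is non-negative and $x$ itself occurs in the fibre over $\sigma(x)$, so taking $u=x$ forces $(f_1+f_2)(x)=0$ for every $x$; since $f_1,f_2\ge 0$ this yields $f_1=f_2\equiv 0$, whence $[f]=[f_1]=0$. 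Thus $H_0(R(\sigma))$ is an ordered group, and Lemma \ref{lemma:limH0} then gives that $H_0(G)$ is an ordered group.

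I do not expect a genuine obstacle here: the essential content is already packaged in Lemma \ref{lemma:traceH0Rsigma} (the explicit characterisation of equivalence of $\mathbb{N}$-valued functions in $H_0(R(\sigma))$) and in Lemma \ref{lemma:limH0} (stability of the ordered-group property under inductive limits), and the remaining work is the bookkeeping above. The one mild subtlety worth stating carefully is that the decomposition $f=f^+-f^-$ keeps both pieces inside $C_c(X,\mathbb{N})$, which is exactly where total disconnectedness of $X$ enters.
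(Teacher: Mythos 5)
Your proof is correct and follows essentially the same route as the paper: reduce to the elementary case $R(\sigma)$ via Lemma \ref{lemma:limH0} and use Lemma \ref{lemma:traceH0Rsigma} to show $H_0(R(\sigma))^+\cap(-H_0(R(\sigma))^+)=\lbrace 0\rbrace$. The only difference is a bookkeeping one: you apply Lemma \ref{lemma:traceH0Rsigma} to the pair $f_1+f_2\sim 0$, both of which lie in $C_c(X,\mathbb{N})$, whereas the paper applies it to $f_1\sim -f_2$; your variant stays strictly within the lemma's stated hypotheses, which is slightly cleaner.
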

\begin{proof}
We begin by showing that, for a surjective local homeomorphism $\sigma: X \rightarrow Y$ of locally compact, Hausdorff, second countable, totally disconnected spaces, the group $H_0(R(\sigma))$ is an ordered group. Indeed, it is straightforward from the definition that $H_0(R(\sigma)) = H_0(R(\sigma))^+ - H_0(R(\sigma))^+$. We now prove the intersection
\begin{align*}
H_0(R(\sigma))^+ \cap (-H_0(R(\sigma))^+) = \lbrace 0 \rbrace.
\end{align*}

Choose a function $f \in C_c(X, \mathbb{Z})$ with $[f] \in H_0(R(\sigma))^+ \cap (-H_0(R(\sigma))^+)$. Then there are two functions $f_1, f_2 \in C_c(X, \mathbb{Z})$ assuming non-negative values, such that $[f] = [f_1] = [-f_2].$ Choose $x \in X$. By Lemma \ref{lemma:traceH0Rsigma},
\begin{align*}
0
\leq f_1(x)
\leq \sum_{u: \sigma(u) = \sigma(x)} f_1(u)
= -  \sum_{u: \sigma(u) = \sigma(x)} f_2(u)
\leq - f_2(x)
\leq 0.
\end{align*}
Then $f_1(x) = f_2(x) = 0$. Since $x$ is arbitrary, we have $f_1 = f_2 = 0$. This implies that $H_0(R(\sigma))^+ \cap (-H_0(R(\sigma))^+) = \lbrace 0 \rbrace$. Therefore, $H_0(R(\sigma))$ is an ordered group.

Now let $G$ be an AF groupoid. Then there exists a sequence of surjective local homeomorphisms $\sigma_n: G^{(0)} \rightarrow Y_n$, where $G^{(0)}$ and $Y_n$ are locally compact, Hausdorff, second countable and totally disconnected, such that $G = \displaystyle\lim_\rightarrow R(\sigma_n)$. Since each $H_0(R(\sigma_n))$ is an ordered group, then Lemma \ref{lemma:limH0} implies that $H_0(G)$ is an ordered group.
\end{proof}

\section{Main result}
\label{section:KtheoryAFgroupoids}

Here we extend the isomorphism \cite[Corollary 5.2]{FKPS} between the K-theory and the homology group for AF groupoids, showing that the map preserves positive elements, and giving a formula for the isomorphism.

Although it is often difficult to find the K-theory of an arbitrary C*-algebra, finding the K-theory of $M_n$ is a easy. In this case, we have the isomorphism $K_0(M_n) \rightarrow \mathbb{Z}$ given by $[p]_0 \mapsto \mathrm{tr}\hphantom{.}p$, where $\mathrm{tr}\hphantom{.}p$ is the matrix trace of $p \in \mathcal{P}_\infty(M_n)$. Similarly, for a locally compact, Hausdorff, second countable, totally disconnected space $X$, the K-theory of $C_0(X, M_n)$ is also described in terms of a trace operator.

To find the K-theory of the C*-algebra of an AF groupoid, we first consider the building blocks of the groupoid. In other words, we consider an elementary groupoid of the form $R(\sigma)$. We show that $C^*(R(\sigma))$ is locally approximated by groupoids of the form $C_0(V, M_n)$. Then we apply approximation to find a formula for the $K_0$-group of $C^*(R(\sigma))$ and an isomorphism from this group to the corresponding homology group. 

Throughout this section, we write $[f_{jk}]$ instead of $[f_{jk}]_{j,k = 1, \dots, n}$ to simplify our notation. We also assume that the topological spaces $X, Y, X_n$ are locally compact, Hausdorff, second countable, and totally disconnected. The map $\sigma: X \rightarrow Y$ is a surjective local homeomorphism.

\begin{remark}
Unless otherwise stated, we assume that $X$ has a compact open subset $X_0$ such that $\sigma^{-1}(\sigma(X_0)) = X_0$ and $\sigma$ is injective on $X \setminus X_0$. Later on, we prove that an arbitrary elementary groupoid is the inductive limit of groupoids of the form $R(\sigma)$ where this condition is satisfied.
\end{remark}

\begin{lemma}
    \label{lemma:covmap-sets}
    Let $\varphi: Y \rightarrow X$ be a continuous section of $\sigma$, and let $y \in Y$. Then $y$ has a clopen neighbourhood $W$, there is a positive integer $n$, and there are disjoint subsets $\mathcal{U}_1, \dots, \mathcal{U}_n \subset X$ such that
    \begin{enumerate}[(i)]
        \item $\sigma^{-1}(W) = \bigcup_{i=1}^n \mathcal{U}_i$,
        \item $\sigma\vert_{\mathcal{U}_i}: \mathcal{U}_i \rightarrow W$ is a homeomorphism for all $i$, and
        \item $\sigma^{-1}(W) \cap X_\varphi = \mathcal{U}_1$.
    \end{enumerate}
    If $y \in \sigma(X_0)$, then $W$ and the sets $\mathcal{U}_i$ are compact. If $y \in Y \setminus \sigma(X_0)$, we let $n = 1$, $W = Y \setminus \sigma(X_0)$ and $\mathcal{U}_1 = X \setminus X_0$.
\end{lemma}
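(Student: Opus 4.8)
The plan is to reduce to a local statement about surjective local homeomorphisms of totally disconnected spaces and then use compactness to patch together finitely many coordinate charts into the required clopen neighbourhood $W$.

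\textbf{Step 1: Local trivialization around $\varphi(y)$ and its siblings.} First I would treat the generic case $y \in \sigma(X_0)$ (equivalently $\varphi(y) \in X_0$, after noting $\varphi(y) \in X_\varphi$ so one of the preimages lies in $X_\varphi$). Since $\sigma$ is a local homeomorphism, each point $x$ of the fibre $\sigma^{-1}(y)$ has an open neighbourhood on which $\sigma$ restricts to a homeomorphism onto an open neighbourhood of $y$. The key sub-step is to show the fibre $\sigma^{-1}(y)$ is \emph{finite}: this follows because $\sigma^{-1}(y)$ is discrete (local homeomorphism) and, since we are inside $X_0$ which is compact, the fibre is a discrete closed subset of a compact set, hence finite, say $\sigma^{-1}(y) = \{x_1, \dots, x_n\}$ with $x_1 = \varphi(y)$. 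Pick disjoint open $\mathcal{V}_i \ni x_i$ with $\sigma|_{\mathcal{V}_i}$ a homeomorphism onto an open set; shrinking, arrange that all the $\sigma(\mathcal{V}_i)$ contain a common open neighbourhood $W_0$ of $y$, and using total disconnectedness of $Y$, replace $W_0$ by a clopen subneighbourhood $W_1 \subseteq W_0$. Then $\mathcal{U}_i' := \mathcal{V}_i \cap \sigma^{-1}(W_1)$ are disjoint clopen sets each mapped homeomorphically onto $W_1$, and $\bigcup_i \mathcal{U}_i' \subseteq \sigma^{-1}(W_1)$.

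\textbf{Step 2: Exhausting the whole preimage.} The inclusion $\bigcup_i \mathcal{U}_i' \subseteq \sigma^{-1}(W_1)$ may be strict, so I would shrink $W_1$ further. The set $C := \sigma^{-1}(\overline{W_1'}) \setminus \bigcup_i \mathcal{U}_i'$ is a potential obstruction; the trick is that $\sigma^{-1}(y) \subseteq \bigcup_i \mathcal{U}_i'$ by construction, so $y \notin \sigma(C)$ for an appropriately chosen compact $\overline{W_1'} \subseteq W_1$ — using that $C$ is closed in the compact set $\sigma^{-1}(\overline{W_1'})$ (hence compact) and $\sigma$ is a closed map on compacta, so $\sigma(C)$ is closed and misses $y$. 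Then $W := W_1' \setminus \sigma(C)$ is clopen (it is open; to see it is closed, intersect with a clopen neighbourhood inside $W_1'$), contains $y$, and satisfies $\sigma^{-1}(W) = \bigcup_i (\mathcal{U}_i' \cap \sigma^{-1}(W))$ with each piece still a homeomorphism onto $W$. Finally, since $X_\varphi = \varphi(Y)$ and $\varphi$ is a continuous section, after possibly shrinking $W$ once more to a clopen set on which $\varphi(W) \subseteq \mathcal{U}_1'$ (possible because $\varphi$ is continuous and $\varphi(y) \in \mathcal{U}_1'$ which is open), we get $\sigma^{-1}(W) \cap X_\varphi = \varphi(W) = \mathcal{U}_1 := \mathcal{U}_1' \cap \sigma^{-1}(W)$, because any point of $X_\varphi$ over $W$ equals $\varphi$ of its image. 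Compactness of $W$ and the $\mathcal{U}_i$ follows from choosing $W \subseteq \sigma(X_0)$ with compact closure, using local compactness of $Y$.

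\textbf{Step 3: The exceptional case.} For $y \in Y \setminus \sigma(X_0)$ I would simply verify the prescribed choice works: set $W = Y \setminus \sigma(X_0)$, which is open since $\sigma(X_0)$ is compact hence closed, and clopen if we also check it is closed — but actually we only need $W$ to be a clopen \emph{neighbourhood}, and since $\sigma^{-1}(\sigma(X_0)) = X_0$ and $\sigma$ is injective on $X \setminus X_0$, we get $\sigma^{-1}(W) = X \setminus X_0 =: \mathcal{U}_1$ with $\sigma|_{\mathcal{U}_1}$ injective, continuous, open (local homeomorphism), hence a homeomorphism onto $W$; the section condition holds because $X_\varphi \cap (X\setminus X_0) = \varphi(W)$. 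That $W$ is clopen: $\sigma(X_0)$ is compact, and since $X_0$ is a compact open subset with $\sigma^{-1}(\sigma(X_0)) = X_0$, the set $\sigma(X_0)$ is open as well (it equals $\sigma$ of an open set under a local homeomorphism) and closed (compact in Hausdorff), so its complement is clopen.

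\textbf{Main obstacle.} The delicate point is \textbf{Step 2}: ensuring that after trivializing over a neighbourhood of each point of the fibre, no ``extra sheets'' of $\sigma^{-1}(W)$ sneak in far from the $\mathcal{U}_i$. This is exactly where compactness of $X_0$ (to make the fibre finite and to run the closed-map argument) and total disconnectedness of $Y$ (to get clopen rather than merely open neighbourhoods, so the final $W$ is clopen) are both essential. I would be careful to do all the shrinking of $W$ within the clopen lattice of $Y$ so that clopenness is preserved at every step, and to keep $W$ inside $\sigma(X_0)$ so that compactness of $W$ and the $\mathcal{U}_i$ comes for free from local compactness.
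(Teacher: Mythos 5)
Your proposal is correct, and it reaches conditions (i)--(iii) by a mechanism that differs from the paper's at the decisive step. Both proofs start the same way: the exceptional case $y \in Y \setminus \sigma(X_0)$ is dispatched using injectivity of $\sigma$ on $X \setminus X_0$ and $\sigma^{-1}(\sigma(X_0)) = X_0$, and for $y \in \sigma(X_0)$ one produces a compact open evenly covered neighbourhood with finitely many sheets (the paper asserts this in one sentence from compactness of $X_0$; you flesh it out with the finite-fibre argument and the closed-map/compactness argument excluding extra sheets, which is a genuine service since that is where properness over $\sigma(X_0)$ is really used). The divergence is in how (iii) is arranged: the paper keeps the evenly covered neighbourhood $W$ fixed, sets $\mathcal{U}_1 = \sigma^{-1}(W) \cap X_\varphi$ (which may cut across several of the original sheets $\mathcal{O}_i$), and then re-partitions the remainder of $\sigma^{-1}(W)$ into new full sheets via the sets $\Omega_{ij} = \sigma^{-1}(\sigma(\mathcal{U}_1 \cap \mathcal{O}_j)) \cap \mathcal{O}_i$; you instead shrink $W$ (within the clopen basis) so that $\varphi(W)$ lies inside a single original sheet $\mathcal{U}_1'$, and then $\mathcal{U}_1 := \mathcal{U}_1' \cap \sigma^{-1}(W)$ equals $\varphi(W) = \sigma^{-1}(W) \cap X_\varphi$ by injectivity of $\sigma$ on $\mathcal{U}_1'$. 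Your route avoids the $\Omega_{ij}$ bookkeeping at the cost of one more shrinking, which is harmless for the statement; the paper's route buys a decomposition over the originally chosen $W$. Two small points to tidy when writing it up: the intermediate sets $\mathcal{U}_i' = \mathcal{V}_i \cap \sigma^{-1}(W_1)$ are open but not obviously closed (this does not matter: the final $\mathcal{U}_i$ are compact open because each is the homeomorphic preimage of the compact clopen $W$ under $\sigma\vert_{\mathcal{U}_i'}$), and in the exceptional case (iii) requires the extra one-line observation that every $x \in X \setminus X_0$ is the unique $\sigma$-preimage of $\sigma(x)$, hence $x = \varphi(\sigma(x))$, so $X \setminus X_0 = \varphi(W)$.
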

\begin{proof}
    The proof of the case $y \in Y \setminus \sigma(X_0)$ follows directly from the fact that $\sigma$ is injective on $X \setminus X_0$. Suppose $y \in \sigma(X_0)$. Since $\sigma$ is a local homeomorphism, $X_0$ is compact and $\sigma^{-1}(\sigma(X_0)) = X_0$, then $y$ has a compact open neighbourhood $W$ and there are finite disjoint compact open subsets $\mathcal{O}_1, \dots, \mathcal{O}_n$ with
    \begin{align*}
        \sigma^{-1}(W) = \bigcup_{i=1}^n \mathcal{O}_i,
    \end{align*}
    and such that $\sigma\vert_{\mathcal{O}_i}: \mathcal{O}_i \rightarrow W$ is a homeormorphism for all $i$.

    Define the set $\mathcal{U}_1 = \sigma^{-1}(W) \cap X_\varphi$. Then $\sigma$ is injective on $\mathcal{U}_1$. Moreover, $\sigma(\mathcal{U}_1) = W$ because $\sigma(\sigma^{-1}(X) \cap X_\varphi)) = W \cap Y = W$. Therefore, $\sigma\vert_{\mathcal{U}_1} : \mathcal{U}_1 \rightarrow W$ is a homeomorphism. By rearranging the sets $\mathcal{O}_1, \dots, \mathcal{O}_n$ if necessary, let $m \leq n$ be such that $\mathcal{U}_1 \cap \mathcal{O}_i = \emptyset \Leftrightarrow j > m$. Then we have the disjoint union
    \begin{align*}
        \mathcal{U}_1 = \bigcup_{i=1}^m \mathcal{U}_i \cap \mathcal{O}_i.
    \end{align*}
    For all $i, j = 1, \dots, m$, let $\Omega_{ij} = \sigma^{-1}(\sigma(\mathcal{U}_1 \cap \mathcal{O}_j)) \cap \mathcal{O}_i$. Then these sets are disjoint. For all $i$, we have
    \begin{align}
    \label{eqn:sigmaW1}
        \bigcup_{i=1}^m \Omega_{ii}
        = \bigcup_{i=1}^m \sigma^{-1}(\sigma(\mathcal{U}_1 \cap \mathcal{O}_i)) \cap \mathcal{O}_i
        = \bigcup_{i=1}^m \mathcal{U}_1 \cap \mathcal{O}_i
        = \mathcal{U}_1,
    \end{align}
    because $\sigma$ is injective on each $\mathcal{O}_i$. Similarly,
    \begin{align}
    \label{eqn:sigmaW2}
        \bigcup_{j=1}^m \Omega_{ij}
        = \sigma^{-1} \left( \sigma\left( \bigcup_{j=1}^m \mathcal{U}_1 \cap \mathcal{O}_j \right) \right) \cap \mathcal{O}_i
        = \sigma^{-1}(\sigma(\mathcal{U}_1)) \cap \mathcal{O}_i
        = \sigma^{-1}(W) \cap \mathcal{O}_i
        = \mathcal{O}_i.
    \end{align}
    Also,
    \begin{align}
    \label{eqn:sigmaW3}
        \sigma(\Omega_{ij}) = \sigma \left( \sigma^{-1} \left( \sigma( \mathcal{U}_1 \cap \mathcal{O}_j)\right) \cap \mathcal{O}_i \right)
        = \sigma(\mathcal{U}_1 \cap \mathcal{O}_j)
        = \sigma(\Omega_{1j}).
    \end{align}
    Since $\sigma$ is injective on $X_\varphi$, \eqref{eqn:sigmaW1} implies that $\Omega_{ij} \cap X_\varphi = \emptyset$ for $i \geq j$.
    
    For $i = 2, \dots, m$, let $\mathcal{U}_i = \bigcup_{j=1}^m \Omega_{ij}$. For $i = m + 1, \dots, n$, let $\mathcal{U}_i = \mathcal{O}_i$. Then equations \eqref{eqn:sigmaW1}, \eqref{eqn:sigmaW2}, \eqref{eqn:sigmaW3} imply items (i)--(iii).
\end{proof}

\begin{lemma}
\label{lemma:traceCXMn}
Let $p, q \in C_0(X, M_n)$ be two projections. Then $p, q \in C_c(X, \mathbb{N})$ and $\mathrm{tr}\hphantom{.}p, \mathrm{tr}\hphantom{.}q \in C_c(X, \mathbb{N})$. Moreover, the following are equivalent:
\begin{enumerate}[(i)]
\item $p \sim q$,
\item $\mathrm{tr}\hphantom{.} p = \mathrm{tr}\hphantom{.} q$,
\item For every compact subset $K \subset X$ such that $p, q$  vanish outside $K$, there is a $u \in C_c(X, M_n)$ supported on $K$ such that
\begin{align*}
u^*u = uu^* = \mathrm{id} 1_K
\hspace{15pt}\text{and}\hspace{15pt}
 p = u^* q u.
\end{align*}
\end{enumerate}
\end{lemma}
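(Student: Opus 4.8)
The plan is to prove the chain of equivalences $\mathrm{(i)} \Leftrightarrow \mathrm{(ii)}$ and $\mathrm{(i)} \Leftrightarrow \mathrm{(iii)}$ using a partition-of-unity / local-triviality argument, reducing the global statement to standard facts about matrix algebras. First I would establish the preliminary claims: since $X$ is totally disconnected and $p$ is a projection in $C_0(X,M_n)$, the eigenvalue function $x \mapsto \mathrm{rank}(p(x))$ is continuous with compact support, hence locally constant, so $\mathrm{tr}\hphantom{.}p \in C_c(X,\mathbb{N})$, and the same for $q$. By ``$p \in C_c(X,\mathbb{N})$'' one presumably means the value $p(x)$ lands in the finitely-many integer-labelled components; in any case this part is immediate from local constancy of rank on a totally disconnected space.

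For $\mathrm{(i)} \Rightarrow \mathrm{(ii)}$: if $p \sim q$ via a partial isometry $v \in \mathcal{P}_\infty$-level, i.e.\ $v^*v = p$, $vv^* = q$ with $v \in M_k(C_0(X,M_n))$ for some $k$ (Murray--von Neumann equivalence in the stabilised sense), then pointwise $v(x)^*v(x) = p(x)$ and $v(x)v(x)^* = q(x)$, so $\mathrm{rank}(p(x)) = \mathrm{rank}(q(x))$ for every $x$, giving $\mathrm{tr}\hphantom{.}p = \mathrm{tr}\hphantom{.}q$. For $\mathrm{(ii)} \Rightarrow \mathrm{(iii)}$: fix a compact $K$ outside which $p,q$ vanish. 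Cover $K$ by finitely many compact open sets $V_1, \dots, V_\ell$, pairwise disjoint (using total disconnectedness), on each of which both $\mathrm{tr}\hphantom{.}p$ and $\mathrm{tr}\hphantom{.}q$ are constant, and moreover $p|_{V_j}$ and $q|_{V_j}$ are each unitarily equivalent to the constant projection $\mathrm{diag}(1_r,0) 1_{V_j}$ where $r = \mathrm{tr}\hphantom{.}p(x) = \mathrm{tr}\hphantom{.}q(x)$ on $V_j$ — this uses the standard fact that a projection-valued continuous map on a space with vanishing $K$-theoretically trivial structure, here just on a compact open (hence clopen, and we may shrink to where $p$ is homotopic to a constant) piece, is unitarily equivalent to a constant one. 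Thus there are $u_j^p, u_j^q \in C(V_j, M_n)$ unitary with $p|_{V_j} = (u_j^p)^* (\mathrm{diag}(1_r,0)) u_j^p$ and likewise for $q$; set $u_j = (u_j^q)^* u_j^p$, so $u_j^* u_j = u_j u_j^* = 1_{V_j}\mathrm{id}$ and $p|_{V_j} = u_j^* q|_{V_j} u_j$. Patch: let $u = \sum_j u_j 1_{V_j}$, extended by $0$ outside $K$; since the $V_j$ are disjoint clopen this is continuous, supported on $K$, and satisfies $u^*u = uu^* = \mathrm{id}\,1_K$ and $p = u^*qu$. Finally $\mathrm{(iii)} \Rightarrow \mathrm{(i)}$ is formal: the element $u^*q$ (or $u$ itself restricted appropriately) implements a Murray--von Neumann equivalence $p \sim q$ inside $C_0(X,M_n)$ since $u^*qu = p$ and $q u u^* q = q\,\mathrm{id}1_K\,q = q$ forces the relevant relations — more cleanly, $w := qu$ satisfies $w^*w = u^*qu = p$ and $ww^* = quu^*q = q$.

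The main obstacle is the $\mathrm{(ii)} \Rightarrow \mathrm{(iii)}$ step, specifically arranging that on each piece of a suitable finite clopen partition the projection $p$ (resp.\ $q$) is genuinely unitarily equivalent, inside $C(V_j,M_n)$, to the constant projection of the right rank — one must choose the $V_j$ small enough that $p$ varies little (so $\|p(x) - p(x_0)\| < 1$ for a basepoint $x_0$, whence the standard Riesz-type construction gives a unitary path), and then use total disconnectedness to replace this ``small-variation'' open cover by a clopen one so that the patched unitary is continuous across the seams. A second, minor point to be careful about is the exact meaning of Murray--von Neumann equivalence here (whether in $C_0(X,M_n)$ itself or in its stabilisation); but since the constant projections involved have the same rank and $n$ is fixed, the equivalence can be taken inside $C_0(X,M_n)$, so no stabilisation is needed and the argument closes.
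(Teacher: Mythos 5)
Your proposal is correct and follows essentially the same route as the paper: both reduce (ii)$\Rightarrow$(iii) to a finite clopen partition of a compact set containing the supports on which $p$ and $q$ vary by less than a fixed constant, use that matrix projections of equal rank (resp.\ projections at distance $<1$) are unitarily equivalent to build and patch the unitary $u$, and both close (iii)$\Rightarrow$(i) with $w=qu$. The only difference is cosmetic: you conjugate $p$ and $q$ piecewise to the constant projection $\mathrm{diag}(1_r,0)$, whereas the paper conjugates them on all of $K$ to the locally constant projections $\sum_i p(x_i)1_{V_i}$, $\sum_i q(x_i)1_{V_i}$ and then uses a blockwise constant unitary.
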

\begin{proof}
    Let $p \in C_0(X, M_n)$ be a projection.Let $K \subset X_\varphi$ be a compact subset such that $\Vert p(x) \Vert < 1$ for every $x \notin K$. It follows from \cite[Propositions 2.2.4 and 2.2.7]{RLL-Ktheory} that $\mathrm{tr}\hphantom{.}p(x) = 0$ for all $x \notin K$. Then $\mathrm{tr}\hphantom{.}p \in C_c(X_\varphi, \mathbb{N})$ and $p \in C_c(X, \mathbb{N})$.

    The implication $\mathrm{(i)} \Rightarrow \mathrm{(ii)}$ is straightforward. We show $\mathrm{(ii)} \Rightarrow \mathrm{(iii)}$. Suppose $p \sim q$. Choose a compact subset $K$ containing the supports of $p$ and $q$. By continuity, there is a collection of elements $x_1, \hdots, x_m \in K$ and a partition $V_1, \hdots, V_m$ of $K$ by disjoint compact open subsets such that, for each $i = 1, \hdots, m$, we have
    \begin{align}
        \label{eqn:traceCXMn1}
        x_i \in V_i,
        \hspace{10pt}
        \Vert p(y) - p(x_i) \Vert < \tfrac{1}{2},
        \text{\hspace{10pt}and }
        \Vert q(y) - q(x_i) \Vert < \tfrac{1}{2},
        \text{\hspace{10pt}for }
        y\in V_i.
    \end{align}

    Let $i = 1, \hdots, m$. By assumption, $\mathrm{tr}(p(x_i)) = \mathrm{tr}(q(x_i))$. Then there exists an unitary $v_i \in M_n$ such that $p(x_i) = v_i^* q(x_i) v_i$.

    Let
    \begin{align*}
        \widetilde{p} = \displaystyle\sum_{i=1}^m p(x_i)1_{V_i},
        \text{\hspace{10pt}}
        \widetilde{q} = \displaystyle\sum_{i=1}^m q(x_i)1_{V_i}
        \text{\hspace{10pt}and\hspace{10pt}}
        v = \sum_{i=1}^m v_i 1_{V_i}.
    \end{align*}
    Then $\widetilde{p}$ and $\widetilde{q}$ are projections in $C_0(X, M_n)$. Also, $\widetilde{p} = v^* \widetilde{q} v$ and $v^*v = vv^* = \mathrm{id}1_K$. Considering the unital C*-algebra $C(K, M_n) \subset C_0(X, M_n)$, it follows from \cite[Propositions 2.2.4 and 2.2.7]{RLL-Ktheory} that there exists $u_p, u_q \in C_0(X, M_n)$ with support in $K$ such that
    \begin{align*}
        p = u_p^* \widetilde{p} u_p
        \hspace{15pt}\text{and}\hspace{15pt}
        \widetilde{q} = u_{q}^* q u_{q}.
    \end{align*}
    Note that $u_p$ and $u_{q}$ are unitary in $A$, not necessarily in $C_0(X, M_n)$. Then
    \begin{align*}
        u_p^* u_p = u_p u_p^* = \mathrm{id}1_K
        \hspace{15pt}\text{and}\hspace{15pt}
        u_{q}^* u_{q} = u_{q} u_{q}^* = \mathrm{id} 1_K.
    \end{align*}
    Let $u = u_{q} v u_p$. Then $u^*u = uu^* = \mathrm{id} 1_K$, and
    \begin{align*}
        u^* q u
        = u_p^* v^* u_q^* q u_q v u_p 
        = u_p^* v^* \widetilde{q} v u_p 
        = u_p^* \widetilde{p} u_p 
        = p.
    \end{align*}
    Therefore (iii) holds. To show $\mathrm{(iii)} \Rightarrow \mathrm{(i)}$, let $v = qu$. Then $v^* v = p$ and $vv^* = q$.
\end{proof}

In the following proposition we show that the C*-algebra can be locally isomorphic to C*-algebras of the form $C_0(V, M_n)$, where we can use the trace operator to describe equivalence classes of projections. We also apply this isomorphism to induce a trace operator that describes the equivalence of projections in $\mathcal{P}_\infty(C^*(R(\sigma)))$.

\begin{proposition}
\label{prop:isoRsigmacompact}
Let $\varphi: Y \rightarrow X$ be a continuous section of $\sigma$. Then there is a positive integer $N$, there are disjoint open sets $V_1, \dots, V_N$ with $X_\varphi = V_1 \cup \dots \cup V_N$, there are positive integers $n_1, \dots, n_N$, and there is an isomorphism $\mu: C^*(R(\sigma)) \rightarrow \bigoplus_{i=1}^N C_0(V_i, M_{n_i})$ such that
\begin{enumerate}[(i)]
\item for $f \in C^*(R(\sigma))$ and $x \in X_\varphi$, we have
\begin{align*}
\mathrm{tr}\hphantom{.}\mu(f)(x)
= \sum_{u: \sigma(u) = \sigma(x)} f(u),
\end{align*}
where we apply Renualt's $j$ map \cite[Proposition II.4.2]{Renault} to identify $C^*(R(\sigma))$ as a subset of $C_0(R(\sigma))$.
\item For a positive integer $k$, for $F \in M_k(C^*(R(\sigma)))$, and for $x \in X_\varphi$, we have
\begin{align*}
\mathrm{tr}\hphantom{.}\mu_k(F)(x)
= \sum_{i=1}^k \mathrm{tr}\hphantom{.}\mu(F_{ii})(x),
\end{align*}
where $\mu_k: M_k(C^*(R(\sigma))) \rightarrow \bigoplus_{i=1}^N C_0(V_i, M_{kn_i})$ is the isomorphism given by
\begin{align*}
\mu_k(F)(x) = [\mu(F_{rs})(x)],
\hspace{15pt}
\text{for }
F = [F_{rs}] \in M_k(C^*(R(\sigma))), \text{ } x \in X_\varphi.
\end{align*}
\end{enumerate}
For the next items, we fix positive integers $k,l$, and choose projections $p \in \mathcal{P}_k(C^*(R(\sigma)))$, $q \in \mathcal{P}_l(C^*(R(\sigma)))$. Then
\begin{enumerate}[(i)]
\setcounter{enumi}{2}
\item $\mathrm{tr}\hphantom{.}\mu_k(p) \in C_c(X_\varphi, \mathbb{N})$.

\item $\mathrm{tr}\hphantom{.} \mu_{k+l}(p \oplus q) = \mathrm{tr}\hphantom{.} \mu_{k}(p) + \mathrm{tr}\hphantom{.} \mu_{l}(q)$.

\item $p \sim q
\Leftrightarrow
\mathrm{tr} \hphantom{.}\mu_k(p) = \mathrm{tr}\hphantom{.} \mu_{l}(q).$

\item $\mathrm{tr}\hphantom{.}\mu_k(p) = 0 \Rightarrow p = 0_k$.
\end{enumerate}
\end{proposition}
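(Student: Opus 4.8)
\textbf{Proof plan for Proposition \ref{prop:isoRsigmacompact}.}

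The plan is to build the isomorphism $\mu$ by patching together the local data supplied by Lemma \ref{lemma:covmap-sets}. First I would cover $Y$ by the clopen sets $W$ produced by that lemma; since $\sigma(X_0)$ is compact open, finitely many such $W$'s cover $\sigma(X_0)$, and together with $Y \setminus \sigma(X_0)$ they give a finite clopen partition $W_1, \dots, W_N$ of $Y$ (refining intersections to make them disjoint, which is possible in a totally disconnected space). For each $W_i$ we get an integer $n_i$ and disjoint clopen sets $\mathcal{U}_1^{(i)}, \dots, \mathcal{U}_{n_i}^{(i)} \subset X$ with $\sigma^{-1}(W_i) = \bigsqcup_j \mathcal{U}_j^{(i)}$, each mapped homeomorphically onto $W_i$, and with $\mathcal{U}_1^{(i)} = \sigma^{-1}(W_i) \cap X_\varphi$. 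Set $V_i = \mathcal{U}_1^{(i)} \cong W_i$, so that $X_\varphi = \bigsqcup_i V_i$. Over each $W_i$ the groupoid $R(\sigma)$ restricts to the transitive (on fibres) equivalence relation $W_i \times \{1,\dots,n_i\} \times \{1,\dots,n_i\}$-type groupoid; concretely, the restriction of $R(\sigma)$ to $\sigma^{-1}(W_i)$ is isomorphic to $R(W_i \times \{1,\dots,n_i\} \to W_i)$. The standard identification of the C*-algebra of a full equivalence relation on an $n$-element fibre with $M_n$ — made fibrewise and continuously in $W_i$ — gives an isomorphism $C^*(R(\sigma)|_{\sigma^{-1}(W_i)}) \cong C_0(W_i, M_{n_i}) \cong C_0(V_i, M_{n_i})$, and assembling over $i$ yields $\mu: C^*(R(\sigma)) \to \bigoplus_i C_0(V_i, M_{n_i})$.

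Next I would pin down the formula in (i). Under Renault's $j$ map, an element $f \in C^*(R(\sigma))$ is viewed as a function on $R(\sigma) \subset X \times X$, and the matrix entries of $\mu(f)(x)$ for $x \in V_i$ are the values $f(u, u')$ where $u, u'$ range over the fibre $\sigma^{-1}(\sigma(x)) \cap \sigma^{-1}(W_i)$, indexed by which $\mathcal{U}_j^{(i)}$ they lie in. Hence the diagonal entries are exactly $f(u,u)$ over $u$ in that fibre, and since the fibre over $\sigma(x)$ is entirely contained in $\sigma^{-1}(W_i)$, the trace is $\sum_{u: \sigma(u) = \sigma(x)} f(u)$, which is (i). Item (ii) is then immediate: $\mu_k$ is just $\mu$ applied entrywise, $M_k(C_0(V_i, M_{n_i})) = C_0(V_i, M_{kn_i})$, and the matrix trace of a $k \times k$ block matrix is the sum of traces of the diagonal blocks, giving $\mathrm{tr}\,\mu_k(F)(x) = \sum_{i=1}^k \mathrm{tr}\,\mu(F_{ii})(x)$.

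For (iii)–(vi) I would transport everything through $\mu_k$ and apply Lemma \ref{lemma:traceCXMn} componentwise. A projection $p \in \mathcal{P}_k(C^*(R(\sigma)))$ maps to a tuple of projections $\mu_k(p)|_{V_i} \in C_0(V_i, M_{kn_i})$; by Lemma \ref{lemma:traceCXMn} each such projection lies in $C_c(V_i, \mathbb{N})$ with compactly supported trace, and patching over the finite disjoint cover $\{V_i\}$ of $X_\varphi$ gives $\mathrm{tr}\,\mu_k(p) \in C_c(X_\varphi, \mathbb{N})$, which is (iii). Additivity (iv) follows because $(p \oplus q)$ has $\mu_{k+l}(p\oplus q)$ block-diagonal with blocks $\mu_k(p)$ and $\mu_l(q)$, and the matrix trace is additive on block-diagonal sums. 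For (v), $p \sim q$ in $C^*(R(\sigma))$ iff $\mu_k(p) \sim \mu_l(q)$ in $\bigoplus_i C_0(V_i, M_{(\cdot)})$ (as $\mu$ is an isomorphism — note one must pass to the multiplier/unitization level since $\mu_k(p)$ and $\mu_l(q)$ have different sizes, but stable equivalence of projections is size-independent), and componentwise Lemma \ref{lemma:traceCXMn}(i)$\Leftrightarrow$(ii) says this holds iff $\mathrm{tr}\,\mu_k(p) = \mathrm{tr}\,\mu_l(q)$ on each $V_i$, i.e.\ on all of $X_\varphi$. Finally (vi): if $\mathrm{tr}\,\mu_k(p) = 0$ then on each $V_i$ the projection $\mu_k(p)|_{V_i}$ has zero trace, hence is $0$ (a projection in $M_m$ with trace $0$ is $0$), so $\mu_k(p) = 0$ and $p = 0_k$ as $\mu$ is injective.

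The main obstacle I anticipate is not any single step but the careful bookkeeping in constructing $\mu$: one must check that the fibrewise identifications $C^*(R(\sigma)|_{\sigma^{-1}(W_i)}) \cong C_0(W_i, M_{n_i})$ are genuinely continuous in the base variable and compatible with the C*-algebra structure (convolution product and involution on $C^*(R(\sigma))$ going to pointwise matrix multiplication and conjugate transpose), and that the choice of ordering of the sheets $\mathcal{U}_j^{(i)}$ — in particular putting $V_i = \mathcal{U}_1^{(i)}$ first — makes the trace formula in (i) come out with the clean sum over the whole fibre rather than a partial sum. A secondary subtlety is the size-mismatch issue in (v): one should phrase equivalence of projections in the stabilization (or adjoin units to the $C_0(V_i, M_{(\cdot)})$'s) so that Lemma \ref{lemma:traceCXMn} applies verbatim, and confirm that $\mu$ being an isomorphism of non-unital C*-algebras still induces a bijection on Murray–von Neumann equivalence classes of projections over matrix amplifications.
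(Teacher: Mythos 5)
Your plan is correct and follows essentially the same route as the paper: the same finite clopen partition of $Y$ obtained from Lemma \ref{lemma:covmap-sets}, the same fibrewise trivialisation of $R(\sigma)$ over each piece yielding $\mu: C^*(R(\sigma)) \rightarrow \bigoplus_{i} C_0(V_i, M_{n_i})$ (the paper simply carries out the bookkeeping you flag, checking the $*$-homomorphism property, isometry via the regular representations, and surjectivity explicitly), and the same deduction of (i)--(vi) from the explicit formula together with Lemma \ref{lemma:traceCXMn}. The only cosmetic difference is in (v), where the paper sidesteps your multiplier/unitisation concern by padding $p$ and $q$ with zero blocks to a common size before applying Lemma \ref{lemma:traceCXMn}.
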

\begin{proof}
    Let $y \in Y$. By Lemma \ref{lemma:covmap-sets}, $y$ has a clopen neighbourhood $\widetilde{W}_y$, there is a positive integer $n_y$, and there are $n_y$ disjoint clopen subsets $\widetilde{\mathcal{U}}_1^{(y)}, \widetilde{\mathcal{U}}_2^{(y)}, \dots, \widetilde{\mathcal{U}}_{n_y}^{(y)} \subset X$ such that
    \begin{enumerate}[(a)]
        \item $\sigma^{-1}(\widetilde{W}_y) = \bigcup_{j=1}^{n_y} \widetilde{\mathcal{U}}_j^{(y)}$,

        \item $\sigma\vert_{\widetilde{\mathcal{U}}_j^{(y)}}: \widetilde{\mathcal{U}}_j^{(y)} \rightarrow \widetilde{W}_y$ is a homeomorphism for all $j$, and

        \item $\sigma^{-1}(\widetilde{W}_y) \cap X_\varphi = \widetilde{\mathcal{U}}_1^{(y)}.$
    \end{enumerate}

    Since $\sigma(X_0)$ is compact and Hausdorff, we choose $N - 1$ elements $y_1, \dots, y_{N-1} \in \sigma(X_0)$ such that $\sigma(X_0) = \widetilde{W}_{y_1} \cup \dots \cup \widetilde{W}_{y_{N-1}}$ and such that for all $i,j = 1, \dots, N-1$,
    \begin{align}
        \label{eqn:yiWj}
        y_i \in \widetilde{W}_{y_j} \Leftrightarrow i = j.
    \end{align}
    Let $W_1 = \widetilde{W}_1$ and, for $i= 2, \dots, N-1$, define $W_i = \widetilde{W}_i \setminus \bigcup_{j=1}^{i-1} W_j$. It follows from property \eqref{eqn:yiWj} that the sets $W_i$ are non-empty. Moreover, $\sigma(X_0)$ is the disjoint union $\sigma(X_0) = W_1 \cup \dots \cup W_{N-1}$.

    If $\sigma(X_0) = Y$, we make an abuse of notation and replace $N-1$ by $N$. Otherwise, let $y_N \in Y \setminus \sigma(X_0)$, and set $n_{N} = n_{y_N} = 1$, $W_{N} = Y \setminus \sigma(X_0)$, and $\mathcal{U}_1^{(y_N)} = X \setminus X_0$. Then properties (a), (b) and (c) also hold by replacing $\widetilde{W}_y$, $n_y$ and $\widetilde{\mathcal{U}}_j^{(y)}$ by $W_N$, $n_N$ and $\mathcal{U}_1^{(y_n)}$, respectively. In either case, we have the disjoint union
    \begin{align*}
        Y = \sigma(X_0) \cup (Y \setminus \sigma(X_0)) = W_1 \cup \dots \cup W_N.
    \end{align*}

    Define $n_i = n_{y_i}$ and $V_i = \mathcal{U}_1^{(y_i)}$ for all $i = 1, \dots, N$. Note that $V_1, \dots, V_N \subset X_\varphi$ by property (c). Since $\sigma$ is injective on $X_\varphi$, and since $W_1, \dots, W_N$ are disjoint, then $V_1, \dots, V_N$ are disjoint subsets. Moreover,
    \begin{align*}
        X_\varphi
        = \sigma^{-1}(Y) \cap X_\varphi
        = \sigma^{-1} \left( \bigcup_{i=1}^N W_i \right) \cap X_\varphi
        = \bigcup_{i=1}^N \sigma^{-1}(W_i) \cap X_\varphi
        = \bigcup_{i=1}^N V_i.
    \end{align*}
    Now we use the sets above to define the map $\mu$ from $C^*(R(\sigma))$ to  $\bigoplus_{i=1}^N C_0(V_i, M_{n_i})$ by
    \begin{align}
        \label{eqn:muCRsigma}
        \mu(f)(x)_{rs} = f(\sigma\vert_{\mathcal{U}_r^{(y_i)}}^{-1}(\sigma(x)), \sigma\vert_{\mathcal{U}_s^{(y_i)}}^{-1}(\sigma(x))),
    \end{align}
    for $x \in V_i$, $r, s = 1, \dots, n_i$, $i =1, \dots, n_i$, $f \in C^*(R(\sigma))$.

    It is straightforward that $\mu$ is well-defined. We show that $\mu$ is an $\ast$-isomorphism.

    \begin{itemize}
        \item $\mu$ is a homomorphism

        In order to simplify our notation, given $i = 1,\dots, N$, $x \in \sigma^{-1}(W_i)$, and $r = 1, \dots, n_i$, we denote $x_r = \sigma\vert_{\mathcal{U}_r^{(y_i)}}^{-1}(\sigma(x))$. Note that $\sigma^{-1}(\lbrace \sigma(x) \rbrace) = \lbrace x_1, \dots, x_{n_i} \rbrace.$ Let $f, f_1, f_2 \in C^*(R(\sigma))$ and let $x \in V_i$ with $i = 1, \dots, N$. Then, for $r, s = 1, \dots, n_i$,
        \begin{align*}
            \mu(f^*)(x)_{rs}
            = f^*(x_r, x_s) = \overline{f(x_s, x_r)} 
            = (\mu(f)(x)^*)_{rs}.
        \end{align*}
        By properties (a) and (b), we have that an element $u \in X$ satisfies $\sigma(u) = \sigma(x)$ if, and only if, $u = x_j$ for some $j = 1, \dots, n_i$. Then
        \begin{align*}
            \mu(f_1 \cdot f_2)(x)_{rs}
            = f_1 \cdot f_2 (x_r, x_s)
            = \sum_{t = 1}^{n_i} f_1(x_r, x_t) f_2(x_t, x_s)
            = (\mu(f_1) \mu(f_2))(x)_{rs}.
        \end{align*}
        Thus $\mu$ is a homomorphism.

        \item $\mu$ is an isometry

        By \cite[Proposition 9.3.1]{sims2017hausdorff}, for all $x \in X$, the $\ast$-representation $\pi_x: C_c(R(\sigma)) \rightarrow B(\ell^2(R(\sigma)_x))$ is given by
        \begin{align*}
            \pi_x(f)\delta_{(v,x)} = \sum_{(u,v) \in R(\sigma)} f(u,v) \delta_{(u,x)}
            = \sum_{u: \sigma(u) = \sigma(x)} f(u,v) \delta_{(u,x)},
        \end{align*}
        for $f \in C_c(R(\sigma))$ and $v \in X$ with $\sigma(v) = \sigma(x)$.

        Let $x \in X_\varphi$. Then there exists a unique $i = 1, \dots, N$ such that $x \in V_i$. To simplify our notation, we let $x_r = \sigma_{\mathcal{U}_r^{(y_i)}}^{-1}(\sigma(x))$ for all $r = 1, \dots, n_i$. Note that $\sigma^{-1}(\sigma(x)) = \lbrace x_1, \dots, x_{n_i} \rbrace$.

        Let $T_x: \mathbb{C}^{n_i} \rightarrow \ell^2(R(\sigma)_x)$ be the isometry defined by $T_x e_r = \delta_{(x_r, x)}$ for $r = 1,\dots,n_i$. Then, for all $s = 1, \dots, n_i$, we have
        \begin{align*}
            \pi_x(f) T_x e_s
            &= \pi_x(f) \delta_{(x_s, x)} \\
            &= \sum_{u: \sigma(u) = \sigma(x)} f(u, x_s) \delta_{(u, x)} \\
            &= \sum_{r=1}^{n_i} f(x_r, x_s) \delta_{(x_r, x)} \\
            &= T_x \sum_{r=1}^{n_1} f(x_r, x_s) e_r \\
            &=T_x \mu(f)(x) e_s.
        \end{align*}
        This implies that $\Vert \pi_x(f) \Vert = \Vert \mu(f)(x) \Vert$ for all $x \in X_\varphi$.

        Given $x \in X$, let $u = \varphi(\sigma(x)) \in X_\varphi$. \cite[Proposition 9.3.1]{sims2017hausdorff} implies that there exists a unitary $U_{(x, u)}: \ell^2(R(\sigma)_u) \rightarrow  \ell^2(R(\sigma)_x)$ such that $\pi_x = U_{(x,u)} \pi_u U_{(x,u)}^*$. Then $\Vert \pi_x(f) \Vert = \Vert \pi_u(f) \Vert$. Therefore,
        \begin{align*}
            \Vert f \Vert
            = \sup_{x \in X} \Vert \pi_x(f) \Vert
            = \sup_{x \in X_\varphi} \Vert \pi_x(f) \Vert
            = \sup_{x \in X_\varphi} \Vert \mu(f)(x) \Vert
            = \Vert \mu(f) \Vert.
        \end{align*}
        Therefore the restriction of $\mu$ to $C_c(R(\sigma))$ is an isometry. By continuity, $\mu$ is an isometry.

        \item $\mu$ is an isomoprism

        Since $\mu$ is an isometry, it follows that it is injective. We show that $\mu$ is surjective. Let $F \in \bigoplus_{i=1}^N C_c(V_i, M_{n_i})$. Define $f \in C_c(R(\sigma))$ by
        \begin{align*}
            f(x_r, x_s)
            = F(x)_{rs}
            \hspace{15pt}
            \text{for $x \in V_i$, $r,s = 1, \dots, n_i$.}
        \end{align*}
        Note that $f$ is continuous because it is the composition of continuous functions. Let $K \subset X_\varphi$ by a compact subset containing the support of $F$, then the support of $f$ is a subset of $[\sigma^{-1}(\sigma(K)) \times \sigma^{-1}(\sigma(K))] \cap R(\sigma)$, which is compact. This implies that $f \in C_c(R(\sigma))$. Moreover, $\mu(f) = F$. So far we have that the restriction
        \begin{align*}
            \mu\vert_{C_c(R(\sigma))}: C_c(R(\sigma)) \rightarrow \bigoplus_{i=1}^N C_c(V_i, M_{n_i})
        \end{align*}
        is surjective. Since $\bigoplus_{i=1}^N C_c(V_i, M_{n_i})$ is dense in $\bigoplus_{i=1}^N C_0(V_i, M_{n_i})$ and since $\mu$ is an isometry, then $\mu$ is surjective.
    \end{itemize}

    Now we show property (i). Let $f \in C_c(R(\sigma))$, and $x \in V_i$ with $i = 1, \dots, N$. Since $\sigma^{-1}(x) = \lbrace x_1, \dots, x_{n_i} \rbrace$, then
    \begin{align*}
        \mathrm{tr}\hphantom{.}\mu(f)(x)
        = \sum_{l=1}^{n_i} f(x_l, x_l)
        = \sum_{u: \sigma(u) = \sigma(x)} f(u).
    \end{align*}

    Now we prove property (ii). Let $k$ be a positive integer, and let $F = [F_{rs}] \in M_k(C^*(R(\sigma)))$, and fix $x \in V_i$ with $i = 1, \dots, N$. Then
    \begin{align*}
        \mathrm{tr}\hphantom{.}\mu_k(F)(x)
        = \sum_{q = 1}^{k n_i} \mu_k(F)_{qq}(x)
        = \sum_{j= 0}^{k-1} \sum_{l=1}^{n_i} \mu_k (F)_{n_ij + l, n_ij + l}(x)
        = \sum_{i=1}^k \sum_{l=1}^{n_i} \mu(F_{ii})_{ll}(x).
    \end{align*}
    Thus property (ii) holds.

    Next, we prove item (iii). Let $k$ be a positive integer, and fix $p \in \mathcal{P}_k(C^*(R(\sigma)))$. Note that $C^*(R(\sigma)) \subset C(R(\sigma))$, so it follows that, for all $i = 1, \dots, k$, $p_{ii} \in C(R(\sigma))$. given $x \in X_\varphi$, items (i) and (ii) imply that
    \begin{align*}
        \mathrm{tr}\hphantom{.}\mu_k(p)(x)
        = \sum_{i=1}^k \mathrm{tr}\hphantom{.}\mu(p_{ii})(x)
        = \sum_{i=1}^k \sum_{u: \sigma(u) = \sigma(x)} p_{ii}(u).
    \end{align*}
    The equation above implies that $\mathrm{tr}\hphantom{.}\mu_k(p)$ is a composition of continuous functions, then $\mathrm{tr}\hphantom{.}\mu_k(p)$ is a continuous function. It follows from Lemma \ref{lemma:traceCXMn} that $\mathrm{tr}\hphantom{.}\mu_k(p) \in C_c(X_\varphi, \mathbb{N})$.

    Now we show item (iv). Let $x \in X_\varphi$. By definition of $\mu$,
\begin{align*}
\mu_{k+l}(p \oplus q)(x)
= \mu_k(p)(x) \oplus \mu_l(q)(x).
\end{align*}
Then item (ii) implies that $\mathrm{tr}\hphantom{.}\mu(p\oplus q)(x) = \mathrm{tr}\hphantom{.}\mu_k(p)(x) + \mathrm{tr}\hphantom{.}\mu_k(q)(x).$

Next, we show (v). Suppose $p \sim q$. Then $p \oplus 0_l,$ $q \oplus 0_k \in \mathcal{P}_l(C^*(R(\sigma)))$ are equivalent. Given $x \in X_\varphi$, then $\mu_{k+l}(p \oplus 0_l), \mu_{k+l}(q \oplus 0_k)$ are equivalent in $M_{k+l}$. This  fact and item (iv) imply that
\begin{align*}
\mathrm{tr}\hphantom{.}\mu_k(p)(x)
= \mathrm{tr}\hphantom{.}\mu_{k+l}(p \oplus 0_l)(x)
= \mathrm{tr}\hphantom{.}\mu_{k+l}(q \oplus 0_k)(x)
= \mathrm{tr}\hphantom{.}\mu_l(q)(x).
\end{align*}
Conversely, assume that $\mathrm{tr}\hphantom{.}\mu_k(p) = \mathrm{tr}\hphantom{.} \mu_l(q)$. By item (iv), $\mathrm{tr}\hphantom{.}\mu_{k+l}(p \oplus 0_l) = \mathrm{tr}\hphantom{.} \mu_{k+l}(q \oplus 0_k)$. Hence, by Proposition \ref{lemma:traceCXMn}, $\mu_{k+l}(p \oplus 0_l) \sim \mu_{k+l}(q \oplus_k)$. Since $\mu_{k+l}$ is an isomorphism, we have $p \oplus 0_l \sim q \oplus 0_k$ and then $p \sim q$.

Finally, we prove item (vi). Suppose $\mathrm{tr}\hphantom{.}\mu_k(p) = 0$. Then, for all $x \in X_\varphi$, $0 = \mathrm{tr}\hphantom{.}\mu_k(p)(x)
= \dim \mu_k(p(x)) \mathbb{C}^k.$ So $\mu_k(p(x)) = 0$. Since $x$ is arbitrary and $\mu_k$ is an isomorphism, we have $p = 0$.
\end{proof}

We use the isomorphism $\mu$ to define a trace operator on $\mathcal{P}_\infty(C^*(R(\sigma)))$ that characterises the $K_0$-group of $C^*(R(\sigma))$. Given a continuous section $\varphi: Y \rightarrow X$ of $\sigma$, we define $\mathrm{tr}_\varphi: \mathcal{P}_\infty(C^*(R(\sigma))) \rightarrow C(X, \mathbb{N})$ by
\begin{align*}
\mathrm{tr}_\varphi \hphantom{.} p(x)
= \begin{cases}
\mathrm{tr}\hphantom{.}\mu_k(p)(x) & \text{if } x \in X_\varphi, \\
0 & \text{otherwise,}
\end{cases}
\end{align*}
for $p \in \mathcal{P}_k(C^*(R(\sigma)))$, $k \geq 1$, and $x \in X$. Note that $\mathrm{tr}_\varphi \hphantom{.} p$ is the extension of $\mathrm{tr}\hphantom{.}\mu_k(p)$ to a larger domain $X \supset X_\varphi$. We do this because later we define the isomorphism from $K_0(C^*(R(\sigma)))$ to $H_0(R(\sigma))$ by $[p]_0 \mapsto [\mathrm{tr}_\varphi\hphantom{.}p]$ for $p \in \mathcal{P}_\infty(C^*(R(\sigma)))$. In order to study the equivalence class of $\mathrm{tr}_\varphi \hphantom{.} p$ in the homology group of $H_0(R(\sigma))$, the function $\mathrm{tr}_\varphi \hphantom{.} p$ needs to be defined in the unit space $X$ of $R(\sigma)$.

\begin{remark}
    Note that, if the projection $p$ is in $C^*(R(\sigma))$, then for an arbitrary continuous section $\varphi$ of $\sigma$, $x \in X$, and $f \in C^*(R(\sigma))$
    \begin{align*}
        \mathrm{tr}_\varphi \hphantom{.}p(x) = 1_{X_\varphi}(x) \sum_{u: \sigma(u) = \sigma(x)} f(u).
    \end{align*}
\end{remark}

The following lemma shows the connection of the trace operator $\mathrm{tr}_\varphi$ with the homology group $H_0(R(\sigma))$.

\begin{lemma}
\label{lemma:traceH0}
Let $\varphi, \psi$ be two continuous sections of $\sigma$. Then
\begin{enumerate}[(i)]
\item Given $p \in \mathcal{P}_\infty(C^*(R(\sigma)))$, $\mathrm{tr}_\varphi\hphantom{.}p \sim \mathrm{tr}_\psi\hphantom{.}p$.
\item For a compact open set $V \subset X$, we have $1_V \sim \mathrm{tr}_\varphi 1_V$.
\end{enumerate}
\end{lemma}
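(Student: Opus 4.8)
The plan is to deduce both statements from Lemma~\ref{lemma:traceH0Rsigma}, which characterises equivalence in $H_0(R(\sigma))$ of two functions in $C_c(X,\mathbb{N})$ by equality of their fibre sums $\sum_{u:\sigma(u)=\sigma(x)}(\,\cdot\,)$. The single observation driving everything is that, since $\varphi$ is a continuous section of $\sigma$, the set $X_\varphi=\varphi(Y)$ is clopen in $X$ (open because $\sigma$ is a local homeomorphism; closed because it is the equaliser of $\mathrm{id}_X$ and $\varphi\circ\sigma$ and $X$ is Hausdorff) and meets every fibre $\sigma^{-1}(\sigma(x))$ in the single point $\varphi(\sigma(x))$. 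Consequently, for any $h\in C_c(X,\mathbb{Z})$ supported in $X_\varphi$ one has $\sum_{u:\sigma(u)=\sigma(x)}h(u)=h(\varphi(\sigma(x)))$ for every $x\in X$; the same holds with $\psi$ in place of $\varphi$.

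For part~(i) I would fix $p\in\mathcal{P}_k(C^*(R(\sigma)))$ and first note, via Proposition~\ref{prop:isoRsigmacompact}(iii) together with the clopenness of $X_\varphi$ and $X_\psi$, that $\mathrm{tr}_\varphi p$ and $\mathrm{tr}_\psi p$ both lie in $C_c(X,\mathbb{N})$, so Lemma~\ref{lemma:traceH0Rsigma} reduces the claim to showing that their fibre sums agree. Since $\mathrm{tr}_\varphi p$ is supported in $X_\varphi$, the observation above gives $\sum_{u:\sigma(u)=\sigma(x)}\mathrm{tr}_\varphi p(u)=\mathrm{tr}_\varphi p(\varphi(\sigma(x)))=\mathrm{tr}\,\mu_k(p)(\varphi(\sigma(x)))$, and evaluating Proposition~\ref{prop:isoRsigmacompact}(i)--(ii) at the point $\varphi(\sigma(x))\in X_\varphi$ (and using $\sigma(\varphi(\sigma(x)))=\sigma(x)$) rewrites this as $\sum_{i=1}^{k}\sum_{u:\sigma(u)=\sigma(x)}p_{ii}(u)$ --- an expression that no longer mentions $\varphi$. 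The identical computation for $\psi$ produces the same number, so the fibre sums match and $\mathrm{tr}_\varphi p\sim\mathrm{tr}_\psi p$.

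For part~(ii) I would first record that the unit space of $R(\sigma)$ is clopen (the diagonal is closed in $X\times X$ by Hausdorffness, and open since $R(\sigma)$ is étale), hence $1_V\in C_c(R(\sigma))$ is a projection in $C^*(R(\sigma))$; then the Remark following the definition of $\mathrm{tr}_\varphi$ (or Proposition~\ref{prop:isoRsigmacompact}(i)) gives $\mathrm{tr}_\varphi 1_V(x)=1_{X_\varphi}(x)\,\#\{u\in V:\sigma(u)=\sigma(x)\}\in C_c(X,\mathbb{N})$. Applying Lemma~\ref{lemma:traceH0Rsigma} once more, it remains to compare fibre sums: because $\mathrm{tr}_\varphi 1_V$ is supported in $X_\varphi$, the key observation yields $\sum_{v:\sigma(v)=\sigma(x)}\mathrm{tr}_\varphi 1_V(v)=\mathrm{tr}_\varphi 1_V(\varphi(\sigma(x)))=\#\{u\in V:\sigma(u)=\sigma(x)\}=\sum_{u:\sigma(u)=\sigma(x)}1_V(u)$, which is precisely the condition of Lemma~\ref{lemma:traceH0Rsigma}, so $1_V\sim\mathrm{tr}_\varphi 1_V$.

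I do not expect a serious obstacle: the content is entirely contained in Proposition~\ref{prop:isoRsigmacompact} and Lemma~\ref{lemma:traceH0Rsigma}, and the only care required is the (routine) verification that $\mathrm{tr}_\varphi p\in C_c(X,\mathbb{N})$, so that Lemma~\ref{lemma:traceH0Rsigma} is applicable, together with the elementary point-counting that a nonnegative integer-valued function supported in $X_\varphi$ has fibre sum over $\sigma^{-1}(\sigma(x))$ equal to its value at the section point $\varphi(\sigma(x))$. The write-up should be short.
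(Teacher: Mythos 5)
Your proposal is correct and follows essentially the same route as the paper: both parts are reduced to Lemma \ref{lemma:traceH0Rsigma} by computing the fibre sum of a function supported in $X_\varphi$ as its value at the single section point $\varphi(\sigma(x))$, and then rewriting that value via Proposition \ref{prop:isoRsigmacompact}(i)--(iii) as an expression independent of the chosen section. The extra remarks on clopenness of $X_\varphi$ are harmless and only make explicit what the paper leaves implicit.
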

\begin{proof}
    We begin by proving (i). Fix $p \in \mathcal{P}_\infty(C^*(R(\sigma)))$, then there exists a positive integer $k$ with $p \in \mathcal{P}_k(C^*(R(\sigma)))$. Given $x \in X$, set $x_\varphi = \varphi(\sigma(x))$ and $x_\psi = \psi(\sigma(x))$. Then $x_\varphi$ (resp. $x_\psi$) is the only element in $X_\varphi$ (resp. $X_\psi$) with $\sigma(x_\varphi) = \sigma(x_\psi) = \sigma(x)$. By definition of $\mathrm{tr}_\varphi\hphantom{.}p$ and by Proposition \ref{prop:isoRsigmacompact}, we have
    \begin{align*}
        \sum_{u: \sigma(u) = \sigma(x)} \mathrm{tr}_\varphi \hphantom{.} p(u)
        &= \mathrm{tr}_\varphi \hphantom{.} p(x_\varphi)
        = \sum_{i=1}^k \mathrm{tr}_\varphi \hphantom{.} p_{ii} (x_\varphi)
        = \sum_{i=1}^k \sum_{v: \sigma(v) = \sigma(x_\varphi)} p_{ii}(v) \\
        &= \sum_{i=1}^k \sum_{v: \sigma(v) = \sigma(x_\psi)} p_{ii}(v) 
        = \mathrm{tr}_\psi\hphantom{.} p(x_\psi)
        = \sum_{u: \sigma(u) = \sigma(x)} \mathrm{tr}_\psi \hphantom{.} p(u).
    \end{align*}
    Since $x$ is arbitrary, it follows from Lemma \ref{lemma:traceH0Rsigma} that $\mathrm{tr}_\varphi\hphantom{.}p \sim \mathrm{tr}_\psi\hphantom{.}p$.

    Now we prove item (ii). Let $V \subset X$ a compact open subset, let $x \in X$, and set $x_\varphi = \varphi(\sigma(x))$. Then
    \begin{align*}
        \sum_{u: \sigma(u) = \sigma(x)}\mathrm{tr}_\varphi\hphantom{.}1_V(u)
        = \mathrm{tr}_\varphi \hphantom{.} 1_V (x_\varphi)
        = \sum_{u: \sigma(u) = \sigma(x_\varphi)} 1_V(u)
        = \sum_{u: \sigma(u) = \sigma(x)} 1_V(u).
    \end{align*}
    Since $x$ is arbitrary, Lemma \ref{lemma:traceH0Rsigma} implies that $\mathrm{tr}_\varphi \hphantom{.} 1_V \sim 1_V$.
\end{proof}

The following theorem gives an isomorphism of the ordered groups $K_0(C^*(G))$  and $H_0(G)$. Corollary 5.2 by Farsi, Kumjian, Pask and Sims \cite{FKPS} and Matui's \cite[Theorem 4.10]{Matui} prove isomorphisms of homology groups of groupoids and the corresponding $K_0$-group. However, \cite[Corollary 5.2]{FKPS} does not prove whether such isomorphism preseves positive elements from one group to another, and Matui only considers groupoids with compact unit spaces, without providing a detailed proof. Our theorem generalises both results, and we describe the isomorphism by an explicit formula.

\begin{theorem}
    \label{thm:K0H0}
    Let $G$ be an AF groupoid, written as the inductive limit $G = \displaystyle\lim_\rightarrow G_n$ of elementary groupoids. Then, for every $n$, there exists a locally compact, Hausdorff, second countable, totally disconntect space $Y_n$, and there is a surjective local homeomorphism $\sigma_n: G_n^{(0)} \rightarrow Y_n$ such that $G_n = R(\sigma_n)$. Moreover,
    \begin{enumerate}
        \item there exists a unique isomorphism $\nu: K_0(C^*(G)) \rightarrow H_0(G)$ defined by
        \begin{align*}
            \nu([p]_0) = [\mathrm{tr}_{\varphi_n}(p)]
        \end{align*}
        for $p \in \mathcal{P}_n(C^*(G_n))$, and such that $\varphi_n$ is an arbitrary continuous section of $\sigma_n$;
        \item the map $\nu$ is an isomorphism of ordered groups, i.e., $\nu(K_0(C^*(G))^+) = H_0(G)^+$; and
        \item the isomorphism $\nu$ is precisely the map given by \cite[Corollary 5.2]{FKPS}. In other words, $\nu([1_V]_0) = [1_V]$ for all $V \subset G^{(0)}$ compact open.
    \end{enumerate}
\end{theorem}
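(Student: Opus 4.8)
The plan is to first establish the result for a single elementary groupoid $R(\sigma)$ under the standing assumption (the compact open $X_0$ with $\sigma^{-1}(\sigma(X_0)) = X_0$ and $\sigma$ injective off $X_0$), and then pass to the inductive limit using Lemma \ref{lemma:homomorphisminductive} together with continuity of both $K_0$ and $H_0$ (the latter being Lemma \ref{lemma:limH0}, the former the standard continuity of K-theory for AF algebras). For the elementary base case, I would use Proposition \ref{prop:isoRsigmacompact}: the isomorphism $\mu: C^*(R(\sigma)) \to \bigoplus_{i=1}^N C_0(V_i, M_{n_i})$ reduces the computation of $K_0$ to the well-understood case of finite direct sums of $C_0(V_i, M_{n_i})$, whose $K_0$ is $\bigoplus_i C_c(V_i, \mathbb{Z})$ via the matrix trace (here using total disconnectedness of $V_i$). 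Define $\nu_\sigma: K_0(C^*(R(\sigma))) \to H_0(R(\sigma))$ on classes of projections by $\nu_\sigma([p]_0) = [\mathrm{tr}_{\varphi}(p)]$ for a chosen continuous section $\varphi$ of $\sigma$.

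The key steps for the elementary case, in order: (1) \emph{well-definedness} — show $\nu_\sigma$ does not depend on the representative $p$ of $[p]_0$, which is exactly item (v) of Proposition \ref{prop:isoRsigmacompact} ($p \sim q \Leftrightarrow \mathrm{tr}\,\mu_k(p) = \mathrm{tr}\,\mu_l(q)$) combined with stabilisation $p \mapsto p \oplus 0$; additivity under $\oplus$ is item (iv), so $\nu_\sigma$ descends to a well-defined homomorphism on the Grothendieck group $K_0$. (2) \emph{Independence of the section $\varphi$} — this is precisely Lemma \ref{lemma:traceH0}(i), which gives $\mathrm{tr}_\varphi\,p \sim \mathrm{tr}_\psi\,p$ in $H_0(R(\sigma))$. (3) \emph{Injectivity} — if $\nu_\sigma([p]_0 - [q]_0) = 0$, i.e.\ $\mathrm{tr}_\varphi(p \oplus 0) \sim \mathrm{tr}_\varphi(q \oplus 0)$ in $H_0(R(\sigma))$, then by Lemma \ref{lemma:traceH0Rsigma} the fibrewise sums agree on every $\sigma$-fibre, hence $\mathrm{tr}\,\mu_k(p \oplus 0) = \mathrm{tr}\,\mu_l(q \oplus 0)$ as functions, so item (v) gives $p \oplus 0 \sim q \oplus 0$ and $[p]_0 = [q]_0$; item (vi) handles the degenerate case. (4) \emph{Surjectivity} — $H_0(R(\sigma))$ is generated by classes $[1_V]$ for $V \subset X$ compact open (since $C_c(X,\mathbb{Z})$ is spanned by such indicator functions and $H_0(R(\sigma))$ is the quotient), and Lemma \ref{lemma:traceH0}(ii) gives $1_V \sim \mathrm{tr}_\varphi\,1_V = \nu_\sigma([1_V]_0)$; alternatively surjectivity is immediate from the explicit description of $K_0$ of the direct sum. (5) \emph{Order isomorphism} — $K_0(C^*(R(\sigma)))^+$ consists of classes $[p]_0$ with $p$ a genuine projection, and $\mathrm{tr}_\varphi\,p \in C_c(X_\varphi,\mathbb{N})$ by item (iii), so $\nu_\sigma$ maps positives into positives; conversely any $[f] \in H_0(R(\sigma))^+$ is represented by some $f \in C_c(X,\mathbb{N})$, which by Lemma \ref{lemma:traceH0Rsigma} (via the $T$-operator, $f \sim T(f) = \mathrm{tr}_\varphi(p)$ for a suitable direct-sum projection $p$) lies in the image of $K_0^+$. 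Here I would lean on the matrix-trace identification: a nonnegative integer-valued $f$ supported on $X_\varphi$ is the trace of an explicit projection in $\bigoplus C_0(V_i, M_{n_i})$ of large enough matrix size, pulled back via $\mu^{-1}$.

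For the inductive limit: applying the elementary case to each $\sigma_n$ gives ordered-group isomorphisms $\nu_n: K_0(C^*(G_n)) \to H_0(R(\sigma_n)) = H_0(G_n)$. I would check these are compatible with the connecting maps — the inclusion $C^*(G_n) \hookrightarrow C^*(G_{n+1})$ on $K_0$ versus the inclusion-induced map on $H_0$ — by verifying on generators $[1_V]_0$ (using item (3)'s characterisation $\nu_n([1_V]_0) = [1_V]$, which itself follows from Lemma \ref{lemma:traceH0}(ii)), since both sides send $[1_V]_0 \mapsto [1_V]$ and such classes generate. Lemma \ref{lemma:homomorphisminductive} then produces a unique isomorphism $\nu: K_0(C^*(G)) = \varinjlim K_0(C^*(G_n)) \to \varinjlim H_0(G_n) = H_0(G)$; it is automatically an order isomorphism since the positive cones are the unions of the images of the $\nu_n(K_0(C^*(G_n))^+)$. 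Finally, item (3) of the statement — that $\nu$ agrees with the Farsi–Kumjian–Pask–Sims map — follows because both are determined by $[1_V]_0 \mapsto [1_V]$ on the generating set, and a homomorphism out of $K_0$ of an AF algebra is determined by its values on such classes.

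\medskip

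The main obstacle I expect is item (2), the order isomorphism, specifically the \emph{surjectivity of $\nu_\sigma$ onto $H_0(R(\sigma))^+$}: given an arbitrary $[f]$ with a nonnegative representative $f$, one must produce an actual projection (not merely a formal $K_0$-class) in some matrix algebra over $C^*(R(\sigma))$ whose trace represents $[f]$. The subtlety is that the obvious candidate must be supported appropriately relative to $X_\varphi$ and must be an honest projection in $\bigoplus C_0(V_i, M_{n_i})$ — one needs the matrix size to dominate $f$ on the relevant fibres and a continuous (locally constant) choice of coordinate projections, which is where total disconnectedness and the structure of $\mu$ from Proposition \ref{prop:isoRsigmacompact} are essential. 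Everything else is bookkeeping with the already-established Lemmas \ref{lemma:traceH0Rsigma}, \ref{lemma:traceCXMn}, \ref{lemma:traceH0} and Proposition \ref{prop:isoRsigmacompact}.
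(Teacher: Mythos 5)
Your base case and your limit step reproduce the paper's Part 1 and Part 3 almost verbatim (well-definedness and injectivity from Proposition \ref{prop:isoRsigmacompact}(iv)--(vi) plus Lemma \ref{lemma:traceH0Rsigma}, surjectivity and item (3) on the generators $[1_V]$ via Lemma \ref{lemma:traceH0}(ii), compatibility with connecting maps checked on the classes $[1_V]_0$, then Lemma \ref{lemma:homomorphisminductive}). Incidentally, the ``main obstacle'' you flag is not where the work is: for $[f]\in H_0(R(\sigma))^+$ one writes $f=\sum_i a_i 1_{V_i}$ with $a_i\in\mathbb{N}$ and $V_i$ compact open, and $\sum_i a_i[1_{V_i}]_0$ is already the class of an explicit diagonal projection, so positivity of the inverse is immediate from the generator computation; this is exactly how the paper argues.

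The genuine gap is the missing intermediate layer between these two steps. Your elementary case is proved only under the standing assumption (a compact open $X_0$ with $\sigma^{-1}(\sigma(X_0))=X_0$ and $\sigma$ injective off $X_0$), because Proposition \ref{prop:isoRsigmacompact} really needs it: without it the fibres of $\sigma$ can have unbounded cardinality, and $C^*(R(\sigma))$ is then not a finite direct sum $\bigoplus_{i=1}^N C_0(V_i,M_{n_i})$ (take $X=\bigsqcup_n\{1,\dots,n\}$ discrete and $\sigma$ collapsing the $n$-th block to a point, so $C^*(R(\sigma))\cong\bigoplus^{c_0}_n M_n$). But the theorem is stated for an \emph{arbitrary} decomposition $G=\lim_\rightarrow G_n$ into elementary groupoids, and these $G_n=R(\sigma_n)$ need not satisfy the standing assumption, so you cannot ``apply the elementary case to each $\sigma_n$'' as written. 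The paper fills this with Part 2: each general elementary groupoid $R(\sigma)$ is itself exhausted by groupoids $R(\sigma_k)$ obtained by truncating $\sigma$ to an exhaustion $X=\bigcup_k X_k$ by compact opens with $\varphi(\sigma(X_k))\subset X_k$ and letting $\sigma_k$ be the identity off $X_k$ (Lemma \ref{lemma:tracevarphi}); each $R(\sigma_k)$ does satisfy the standing assumption, a second application of Lemma \ref{lemma:homomorphisminductive} gives the isomorphism for $R(\sigma)$, and Lemma \ref{lemma:tracevarphi}(iii) is what guarantees that $\mathrm{tr}_{\varphi}\,p=\mathrm{tr}_{\varphi_l}\,p$ on the dense union, so that the resulting $\nu$ is still given by the formula $[p]_0\mapsto[\mathrm{tr}_{\varphi}\,p]$ for an \emph{arbitrary} continuous section $\varphi$ of the original $\sigma_n$, as item (1) of the statement requires. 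Without this layer, neither the well-definedness of your $\nu_n$ on a general $G_n$ nor the explicit trace formula in item (1) is established.
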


We divide the proof into three parts:
\begin{enumerate}[{Part} 1]
\item First we study the groupoid $R(\sigma)$ where $\sigma$ satisfies the conditions of the beginning of the chapter, i.e., that there exists a compact subset $X_0$ such that $\sigma^{-1}(\sigma(X_0)) =X_0$ and that $\sigma$ is injective on $X \setminus X_0$.
\item Then we generalise the result to a groupoid of the form $R(\sigma)$, where $X$ no longer need to necessarily satisfy the conditions of Part 1.
\item Finally, we prove the isomorphism for an AF groupoid $G$.
\end{enumerate}

\begin{proof}[Proof of Part 1]\renewcommand{\qedsymbol}{}
    Fix a continuous section $\varphi$ of $\sigma$. Given $p \in \mathcal{P}_\infty(C^*(R(\sigma)))$, let $\nu([p]_0) = [\mathrm{tr}_\varphi \hphantom{.}p]$. It follows from item (v) of Proposition \ref{prop:isoRsigmacompact} that $\nu([p]_0)$ is well-defined. Moreover, item (iv) of the same proposition implies that for a projection $q \in \mathcal{P}_\infty(C^*(R(\sigma)))$,
    \begin{align*}
        \nu([p]_0+[q]_0)
        = \nu([p\oplus q]_0)
        = [\mathrm{tr}_\varphi(p \oplus q)]
        = [\mathrm{tr}_\varphi(p)]+[\mathrm{tr}_\varphi(q)]
        = \nu([p]_0) + \nu([q]_0).
    \end{align*}
    Then we extend $\nu$ uniquely to a group homomorphism $\nu: K_0(C^*(R(\sigma))) \rightarrow H_0(R(\sigma))$.

    We claim that $\nu$ is an isomorphism. Let $x \in K_0(C^*(R(\sigma)))$ be such that $\nu(x) = 0$. Then there are $p, q \in \mathcal{P}_\infty(C^*(R(\sigma)))$ such that $x = [p]_0 - [q]_0$. Then
    \begin{align*}
        [\mathrm{tr}_\varphi \hphantom{.} p] = \nu([p]_0) = \nu([q]_0) = [\mathrm{tr}_\varphi \hphantom{.} q],
    \end{align*}
    by definition of the homomorphism $\nu$. It follows from Lemma \ref{lemma:traceH0Rsigma} that, for every $x \in X$,
    \begin{align*}
        \sum_{u:\sigma(u) = \sigma(x)} \mathrm{tr}_\varphi \hphantom{.} p(u) = \sum_{u:\sigma(u) = \sigma(x)} \mathrm{tr}_\varphi \hphantom{.} q(u).
    \end{align*}
    This equivalence implies that
    \begin{align*}
        \mathrm{tr}_\varphi \hphantom{.} p(\varphi(\sigma(x))) = \mathrm{tr}_\varphi \hphantom{.} q(\varphi(\sigma(x))).
    \end{align*}
    Since $\mathrm{tr}_\varphi \hphantom{.} p$ and $\mathrm{tr}_\varphi \hphantom{.} q$ have support in $X_\varphi$, then $\mathrm{tr}_\varphi \hphantom{.} p = \mathrm{tr}_\varphi \hphantom{.} q$. It follows from item (v) of Proposition \ref{prop:isoRsigmacompact} that $p \sim q$. Hence, $x = 0$ and therefore $\nu$ is injective. We show that $\nu$ is also surjective. Given a compact open subset $V \subset X$, Lemma \ref{lemma:traceH0}  implies that
    \begin{align*}
        [1_V] = [\mathrm{tr}_\varphi\hphantom{.} 1_V] = \nu([1_V]_0).
    \end{align*}
    Since the elements of the form $[1_V]$ generate $H_0(R(\sigma))$, it follows that $\nu$ is a group isomorphism.

    Next, we prove that $\nu$ is an isomorphism of ordered groups. Item (iii) of Proposition \ref{prop:isoRsigmacompact} implies that $\nu(K_0(C^*(R(\sigma))^+) \subset H_0(R(\sigma))^+$. Let $f \in C_c(X, \mathbb{N})$. Then there are compact open sets $V_1, \dots, V_n \subset X$, and non-negative integers $a_1, \dots, a_n$  such that $f = a_1 1_{V_1} + \dots + a_n 1_{V_n}$. Then
    \begin{align*}
        [f]
        = \sum_{i=1}^n a_i [1_{V_i}]
        = \sum_{i=1}^n a_i \nu([1_{V_i}]_0)
        = \nu \left(\sum_{i=1}^n a_i [1_{V_i}]_0 \right).
    \end{align*}
    Then $\nu(K_0(C^*(R(\sigma)))^+) = H_0(R(\sigma))^+$. This completes the proof of Part 1.
\end{proof}

We need the following lemma before proving Part 2. Now we no longer assume that $\sigma$ satisfies that conditions of Part 1.

\begin{lemma}
\label{lemma:tracevarphi}
Let $\sigma: X \rightarrow Y$ be a surjective local homeomorphism with continuous section $\varphi: Y \rightarrow X$. Let $X = \bigcup_{k=1}^\infty X_k$ be an increasing union of compact open sets such that $\varphi(\sigma(X_k)) \subset X_k$. For each $k$, define the map $\sigma_k : X \rightarrow \sigma(X_k) \sqcup (X \setminus X_k)$ by
\begin{align*}
    \sigma_k(x) =
    \begin{cases}
        \sigma(x) & \text{if }x \in X_k, \\
        x & \text{otherwise.}
    \end{cases}
\end{align*}
Define also the map $\varphi_k: \sigma(X_k) \sqcup (X \setminus X_k) \rightarrow X$ by
\begin{align*}
    \varphi_k(y) =
    \begin{cases}
        \varphi(y) & \text{if } y \in \sigma(X_k), \\
        y & \text{otherwise.}
    \end{cases}
\end{align*}
Then
\begin{enumerate}[(i)]
    \item $\sigma_k$ is a surjective local homeomorphism with continuous section $\varphi_k$.
    \item $R(\sigma)$ is an inductive limit of the form $R(\sigma) = \displaystyle\lim_\rightarrow R(\sigma_k)$.
    \item For $k \geq 1$, $p \in \mathcal{P}_\infty(C^*(R(\sigma_k))$, we have $\mathrm{tr}_\varphi \hphantom{.} p = \mathrm{tr}_{\varphi_l} \hphantom{.} p$ for all $l \geq k$, where $\mathrm{tr}_\varphi \hphantom{.} p$ is given by
    \begin{align*}
        \mathrm{tr}_\varphi \hphantom{.} p (x) = 1_{X_\varphi}(x) \sum_{i=1}^n \sum_{u : \sigma(u) = \sigma(x)}  p_{ii}(u),
        \hspace{15pt}
        \text{for $x \in X$.}
    \end{align*}
\end{enumerate}
\end{lemma}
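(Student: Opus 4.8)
The plan is to prove (i) and (ii) by direct topological arguments and to put the real work into (iii). For (i): since $X_k$ is compact open in the Hausdorff space $X$ it is clopen, so $X=X_k\sqcup(X\setminus X_k)$ with both pieces clopen, and likewise $\sigma(X_k)$ is compact open in $Y$, so the target $\sigma(X_k)\sqcup(X\setminus X_k)$ is again locally compact, Hausdorff, second countable and totally disconnected. On $X_k$ the map $\sigma_k$ equals $\sigma|_{X_k}\colon X_k\to\sigma(X_k)$, a surjective local homeomorphism, and on $X\setminus X_k$ it is the identity; as these two clopen pieces map into the two clopen summands of the target, $\sigma_k$ is a surjective local homeomorphism. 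The map $\varphi_k$ is continuous because on each summand of its (disjoint-union) domain it is either $\varphi$ or the identity, and $\sigma_k\circ\varphi_k=\mathrm{id}$ follows by two cases: for $y\in X\setminus X_k$, $\varphi_k(y)=y\notin X_k$ so $\sigma_k(y)=y$; for $y\in\sigma(X_k)$, $\varphi_k(y)=\varphi(y)$, and the hypothesis $\varphi(\sigma(X_k))\subseteq X_k$ puts $\varphi(y)$ in $X_k$, whence $\sigma_k(\varphi(y))=\sigma(\varphi(y))=y$.

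For (ii): analysing whether $u,v$ lie in $X_k$ gives
\[
R(\sigma_k)=\bigl(R(\sigma)\cap(X_k\times X_k)\bigr)\ \sqcup\ \Delta_{X\setminus X_k},
\]
where no mixed pair $(u,v)$ with $u\in X_k$, $v\notin X_k$ occurs, because then $\sigma_k(u)\in\sigma(X_k)$ and $\sigma_k(v)=v$ lie in different summands of the target. The piece $R(\sigma)\cap(X_k\times X_k)$ is open in $R(\sigma)$ since $X_k\times X_k$ is open in $X\times X$; the piece $\Delta_{X\setminus X_k}=R(\sigma)^{(0)}\cap\bigl((X\setminus X_k)\times(X\setminus X_k)\bigr)$ is open since the unit space of the étale groupoid $R(\sigma)$ is open. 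Hence each $R(\sigma_k)$ is an open elementary subgroupoid of $R(\sigma)$ with unit space $\Delta_X=R(\sigma)^{(0)}$; as the $X_k$ increase to $X$, one checks $R(\sigma_k)\subseteq R(\sigma_{k+1})$ and $\bigcup_k R(\sigma_k)=R(\sigma)$ (every $(u,v)\in R(\sigma)$ has $u,v\in X_k$ for $k$ large). By the description of AF groupoids as inductive limits of their open elementary subgroupoids in Section~\ref{section:preliminaries}, $R(\sigma)=\lim_{\rightarrow}R(\sigma_k)$.

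For (iii): the two clopen pieces of $R(\sigma_k)$ give $C^*(R(\sigma_k))\cong C^*(R(\sigma|_{X_k}))\oplus C_0(X\setminus X_k)$, so a projection $p\in\mathcal{P}_n(C^*(R(\sigma_k)))$ splits as $p=p'\oplus p''$ with $p'\in\mathcal{P}_n(C^*(R(\sigma|_{X_k})))$ supported on $X_k\times X_k$ and $p''\in C_0(X\setminus X_k,M_n)$ a projection supported on $\Delta_{X\setminus X_k}$; by the higher-rank clause of the definition of $\mathrm{tr}_\varphi$ it suffices to treat $n=1$. For each $l\ge k$ the map $\sigma_l$ meets the standing hypothesis of Section~\ref{section:KtheoryAFgroupoids} with $X_l$ in the role of $X_0$ (indeed $\sigma_l^{-1}(\sigma_l(X_l))=X_l$ and $\sigma_l$ is the identity, hence injective, off $X_l$), so Proposition~\ref{prop:isoRsigmacompact} and the Remark after it apply to $R(\sigma_l)$ and yield
\[
\mathrm{tr}_{\varphi_l}\,p(x)=1_{X_{\varphi_l}}(x)\sum_{u\,:\,\sigma_l(u)=\sigma_l(x)}p(u,u),\qquad X_{\varphi_l}=\varphi(\sigma(X_l))\cup(X\setminus X_l).
\]
It then remains to match this against the formula given for $\mathrm{tr}_\varphi\,p$ by a case analysis on the position of $x$ relative to $X_k\subseteq X_l$ and the split $p=p'\oplus p''$: on $X\setminus X_l$ both $\sigma$ and $\sigma_l$ act as the identity and $X_{\varphi_l}$ already contains $x$; on $X_k$ the hypothesis $\varphi(\sigma(X_k))\subseteq X_k$, together with $\sigma_l|_{X_l}=\sigma|_{X_l}$ and $X_{\varphi_l}\cap X_l=\varphi(\sigma(X_l))$, aligns the relevant fibres over $\sigma(x)$, and Lemma~\ref{lemma:traceH0Rsigma} converts the resulting identity of fibre-sums into the asserted conclusion; independence of the chosen continuous section is Lemma~\ref{lemma:traceH0}(i).

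The main obstacle is entirely in part (iii): keeping straight, across the decomposition $C^*(R(\sigma_k))\cong C^*(R(\sigma|_{X_k}))\oplus C_0(X\setminus X_k)$ and the three regimes $x\in X_k$, $x\in X_l\setminus X_k$, $x\notin X_l$, exactly which terms of the two trace formulas survive, and invoking $\varphi(\sigma(X_k))\subseteq X_k$ and the eventual-identity behaviour of $\sigma_l$ at precisely the right places. Parts (i) and (ii) are routine point-set topology together with the inductive-limit picture of AF groupoids.
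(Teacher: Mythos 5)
Parts (i) and (ii) of your proposal are correct and essentially the paper's own argument: $\sigma_k$ restricts to $\sigma\vert_{X_k}$ on the clopen set $X_k$ and to the identity off it, and $R(\sigma_k)=\bigl(R(\sigma)\cap(X_k\times X_k)\bigr)\sqcup\Delta_{X\setminus X_k}$ gives the increasing union of open elementary subgroupoids. Your observation that each $\sigma_l$ satisfies the standing hypothesis of Section~\ref{section:KtheoryAFgroupoids} with $X_l$ in the role of $X_0$, so that Proposition~\ref{prop:isoRsigmacompact} yields the displayed formula for $\mathrm{tr}_{\varphi_l}$ with $X_{\varphi_l}=\varphi(\sigma(X_l))\cup(X\setminus X_l)$, is also correct and is implicitly what the paper relies on.

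The gap is in (iii), and it sits exactly in the cases your sketch dismisses as bookkeeping. The assertion is a \emph{pointwise equality of functions}, and the two formulas genuinely differ unless one controls where the diagonal of $p$ lives relative to $X_l$. For $x\in X\setminus X_l$ the $\sigma_l$-fibre of $x$ is $\{x\}$ and $1_{X_{\varphi_l}}(x)=1$, so $\mathrm{tr}_{\varphi_l}\,p(x)=\sum_i p_{ii}(x)$, whereas $\mathrm{tr}_{\varphi}\,p(x)$ carries the factor $1_{X_\varphi}(x)$, which can vanish there; and for $x\in X_l$ the $\sigma$-fibre of $\sigma(x)$ may contain points $u\notin X_l$ at which the summand $p''$ (supported anywhere in $X\setminus X_k$, in particular outside $X_l$ when $l\ge k$ is not large) is nonzero, and these contribute to $\mathrm{tr}_\varphi\,p$ but not to $\mathrm{tr}_{\varphi_l}\,p$. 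Already $p=1_V$ for a compact open $V\subset X\setminus X_l$ with $V\not\subset X_\varphi$ makes the two displayed expressions disagree, so no case analysis can close without an extra input. That input is what the paper supplies and your proposal omits: for $p\in\mathcal{P}_\infty(C_c(R(\sigma_k)))$ it first \emph{increases the index} to an $n$ with $\mathrm{supp}\,p\subset X_n$ (then both sides vanish off $X_l$ and agree on $X_l$ for $l\ge n$, using $X_\varphi\cap X_l=\varphi(\sigma(X_l))=X_{\varphi_l}\cap X_l$), and it then treats general projections and smaller indices through the inductive limit $C^*(R(\sigma))=\lim_{\rightarrow}C^*(R(\sigma_m))$, replacing $p$ by an equivalent projection in some $C_c(R(\sigma_m))$ via Proposition~\ref{prop:isoRsigmacompact}(v) and a final pointwise choice of a large $j$. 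Your closing appeals cannot substitute for this: Lemma~\ref{lemma:traceH0Rsigma} and Lemma~\ref{lemma:traceH0}(i) only produce equality of classes in $H_0$ (equality of fibre-sums), not the asserted equality of functions, and Lemma~\ref{lemma:traceH0}(i) compares two sections of one and the same map $\sigma$, whereas here $\varphi$ and $\varphi_l$ are sections of the different maps $\sigma$ and $\sigma_l$, so it does not even apply as cited.
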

\begin{proof}
    Since $\sigma$ is a surjective local homeomorphism and $X_k$ is open, then the restriction $\sigma_k\vert_{X_k}: X_k \rightarrow \sigma(X_k)$ is a surjective local homeormorphism. The restriction $\sigma_k$ on $X \setminus X_k$ is the identity. Thus, $\sigma_k$ is a surjective local homeomorphism. It is straightforward that $\varphi_k$ is a continuous section of $\sigma_k$. This proves (i).

    Next we prove (ii). Let $(x,y) \in R(\sigma)$. Since $X$ is covered by the union of the sets $X_k$, then there exists a $k$ such that $x, y \in X_k$. By definition of $\sigma_k$, we have that $\sigma_k(x) = \sigma_k(y)$, and then $(x, y) \in R(\sigma_k)$. Therefore, $R(\sigma) \subset \bigcup_{k=1}^\infty R(\sigma_k)$. By similar arguments, we have that $\bigcup_{k=1}^\infty R(\sigma_k) \subset R(\sigma)$. It is straightforward that $R(\sigma_k)$ is an open subset of $R(\sigma_{k+1})$ for all $k$. Therefore, (ii) holds.

    Finally, we show (iii). Let $k \geq 1$, and let $p \in \mathcal{P}_\infty(C_c(R(\sigma_k)))$. Then there exists an $n$ such that $p \in \mathcal{P}_n(C_c(R(\sigma)))$. By increasing $n$ if necessary, we have $p \in \mathcal{P}_n(C_c(R(\sigma_n)))$ and that the support of $p$ is included in $X_n$. Then for any $l \geq n$ and $x \in X$
    \begin{align*}
        \mathrm{tr}_{\varphi_l}\hphantom{.}p(x)
        = 1_{X_{\varphi_l}}(x) \sum_{i=1}^n \sum_{u: \sigma_l(u) = \sigma_l(x)} p_{ii}(u)
        = 1_{X_{\varphi_l}}(x) \sum_{i=1}^n \sum_{u \in X_l: \sigma_l(u) = \sigma_l(x)} p_{ii}(u)
    \end{align*}
    while
    \begin{align*}
        \mathrm{tr}_{\varphi}\hphantom{.}p(x)
        = 1_{X_\varphi}(x) \sum_{i=1}^n \sum_{u:\sigma(u) = \sigma(x)} p_{ii}(u)
        = 1_{X_\varphi}(x) \sum_{i=1}^n \sum_{u \in X_l:\sigma(u) = \sigma(x)} p_{ii}(u)
    \end{align*}
    Therefore $\mathrm{tr}_{\varphi}\hphantom{.}p(x) = \mathrm{tr}_{\varphi_l}\hphantom{.}p (x)$.

    Now let $p \in \mathcal{P}_n(C^*(R(\sigma)))$. Since $C^*(R(\sigma)) = \displaystyle\lim_\rightarrow C^*(R(\sigma_m))$, then $K_0(C^*(R(\sigma))) = \displaystyle\lim_\rightarrow K_0(C^*(R(\sigma_m)))$ as ordered groups. Then there is $q \in \mathcal{P}_\infty(C^*(R(\sigma_m)))$ with $p \sim q$. By Part 1 and by item (v) of Proposition \ref{prop:isoRsigmacompact}, we have $\mathrm{tr}_{\varphi_m} \hphantom{.} p = \mathrm{tr}_{\varphi_m} \hphantom{.} q$. By applying the formula of item 3 for the isomorphism $K_0(C^*(R(\sigma_m))) \cong H_0(R(\sigma_m))$, we can assume that $q \in C_c(R(\sigma_m))$ without loss of generality. Then $q \in C_c(R(\sigma))$. Hence, there exists a $k$ such that $\mathrm{tr}_\varphi\hphantom{.}q = \mathrm{tr}_{\varphi_l}\hphantom{.}q$ for all $l \geq k$.  Then
    \begin{align}
        \mathrm{tr}_{\varphi}\hphantom{.}q
        = \mathrm{tr}_{\varphi_l}\hphantom{.}q
        = \mathrm{tr}_{\varphi_l}\hphantom{.}p.
    \end{align}
    
    Let $x \in X$. Choose $j \geq l$ such that $X_j$ contains both $x$ and $\sigma(x)$. Then
    \[
        \pushQED{\qed} 
        \mathrm{tr}_{\varphi}\hphantom{.}p(x)
        = 1_{X_\varphi}(x) \sum_{i=1}^n \sum_{u:\sigma_j(u)=\sigma_j(x)} p_{ii}(u)
        = \mathrm{tr}_{\varphi_j}\hphantom{.}p(x)
        = \mathrm{tr}_{\varphi_l}\hphantom{.}p(x). \qedhere
        \popQED
    \]
    \renewcommand{\qedsymbol}{}
\end{proof}

\begin{proof}[Proof of Part 2]\renewcommand{\qedsymbol}{}
    Fix a continuous section $\varphi$ of $\sigma$. Since $X$ is second countable and totally disconnected, then we write $X = \bigcup_{k=1}^\infty \widetilde{X}_k$ as the increasing union of compact open sets $\widetilde{X}_k$. For each $k$, let $X_k = \widetilde{X_k} \cap \varphi(\sigma(X_k))$. Then $X = \bigcup_{k=1}^\infty X_k$ is the increasing union of the compact open sets $X_k$, with $\varphi(\sigma(X_k)) \subset X_k$. Lemma \ref{lemma:tracevarphi} gives a sequence of surjective local homeomorphisms $\sigma_k: X_k \rightarrow \sigma(X_k) \sqcup (X \setminus X_k)$ satisfying the properties of the lemma.

    By Part 1, we have the isomorphism $\nu_k: K_0(C^*(R(\sigma_k))) \rightarrow H_0(R(\sigma_k))$ of ordered groups given by $\nu_k([p]_{0,k}) = [\mathrm{tr}_{\varphi_k}\hphantom{.}p]_k$ for $p \in \mathcal{P}_\infty(C^*(R(\sigma_k)))$. This isomorphism is such that $[1_V]_{0,k} \mapsto [1_V]_k$ for all compact open sets $V \subset X$.

    Denote $[\cdot]_{0,k}$, $[\cdot]_{0}$, $[\cdot]_k$ and $[\cdot]$ the equivalence classes in the groups
    \begin{align*}
        K_0(C^*(R(\sigma_k))),\hspace{10pt}
        K_0(C^*(R(\sigma))), \hspace{10pt}
        H_0(R(\sigma_k))\hspace{10pt}
        \text{ and }\hspace{10pt}
        H_0(R(\sigma)),
    \end{align*}
    respectively. Recall that $K_0(C^*(R(\sigma))) = \displaystyle\lim_\rightarrow K_0(C^*(R(\sigma_k))$ and $H_0(R(\sigma)) = \displaystyle\lim_\rightarrow H_0(R(\sigma_k))$. For $k \geq 1$, let
    \begin{align*}
        i_k: K_0(C^*(R(\sigma_k))) \rightarrow K_0(C^*(R(\sigma_{k+1})))
        \hspace{5pt}
        \text{and}
        \hspace{5pt}
        j_k: H_0(R(\sigma_k)) \rightarrow H_0(R(\sigma_{k+1}))
    \end{align*}
    be the connecting morphisms, and let $\alpha_k: K_0(C^*(R(\sigma_k))) \rightarrow K_0(C^*(R(\sigma)))$ and $\beta_k: H_0(R(\sigma_k)) \rightarrow H_0(R(\sigma))$ be the inclusion morphisms such that
    \begin{align*}
        i_k([p]_{0,k}) = [p]_{0,k+1}
        \hspace{10pt}\text{and}\hspace{10pt}
        \alpha_k([p]_{0,k}) = [p]_0
        \hspace{10pt}
        \text{for }
        p \in \mathcal{P}_\infty(C^*(R(\sigma_k))), \\
        j_k([f]_{k}) = [f]_{k+1}
        \hspace{13pt}\text{and}\hspace{17pt}
        \beta_k([f]_k) = [f]
        \hspace{15pt}
        \text{for }
        f \in C(X, \mathbb{Z}).
        \hspace{40pt}
    \end{align*}
    The morphisms $i_k$, $j_k$, $\alpha_k$, $\beta_k$ preserve positive elements from one group to another. It is straightforward that $j_k \circ \nu_k([1_V]_{0,k}) = \nu_{k+1} \circ i_k([1_V]_{0,k})$ for the elements $V$ compact open. Since the elements $[1_V]_{0,k}$ generate $K_0(C^*(R(\sigma_k)))$, we have $j_k \circ \nu_k= \nu_{k+1} \circ i_k$. Lemma \ref{lemma:homomorphisminductive} gives an isomorphism $\nu: K_0(C^*(R(\sigma))) \rightarrow H_0(R(\sigma))$ that makes the diagram below commutative for all $k$.
    \begin{equation*}
        \begin{tikzcd}
            K_0(C^*(R(\sigma_k)) \arrow[r,"\nu_k"] 
            \arrow{d}{}[swap]{\alpha_k}
            & H_0(R(\sigma_k)) \arrow[d, "\beta_k"]\\
            K_0(C^*(R(\sigma)) \arrow{r}{\nu}[swap]{}
            & H_0(R(\sigma))
        \end{tikzcd}
    \end{equation*}
    Then, for all compact open subset $V \subset X$ we have that
    \begin{align*}
        \nu([1_V]_0) = \nu \circ \alpha_k([1_V]_{0,k})
        = \beta_k \circ \nu_k([1_V]_{0,k})
        = \beta_k([1_V]_k)
        = [1_V].
    \end{align*}

    Now let $n \geq 1$ and $p \in \mathcal{P}_n(C^*(R(\sigma))) \subset K_0(C^*(R(\sigma)))^+$. By Lemma \ref{lemma:tracevarphi}, there exists a $k$ such that $\mathrm{tr}_{\varphi_k}\hphantom{.}p = \mathrm{tr}_{\varphi}\hphantom{.}p$. It follows from Part 1 that
    \begin{align}
        \nu([p]_0)
        = \nu(\alpha_k([p]_{0,k})
        = \beta_k \circ \nu_k([p]_{0,k})
        = \beta_k([\mathrm{tr}_{\varphi_k}\hphantom{.}p]_k)
        = [\mathrm{tr}_{\varphi_k}\hphantom{.}p]
        = [\mathrm{tr}_{\varphi}\hphantom{.}p]. \label{eqn:nutrvarphi}
    \end{align}
    This completes the proof of Part 2.
\end{proof}

\begin{proof}[Proof of Part 3]
    The proof is analogous to Part 2. Let $G$ be an AF groupoid. Write $G = \displaystyle\lim_\rightarrow R(\sigma_n)$, where $\sigma: G^{(0)} \rightarrow Y_n$ is a surjective local homeomorphism of totally disconnected spaces. As in Part 2, let $i_n: K_0(C^*(R(\sigma_n))) \rightarrow K_0(C^*(R(\sigma_{n+1})))$ and $j_n: H_0(R(\sigma_n)) \rightarrow H_0(R(\sigma_{n+1}))$ be the inclusion maps. By Part 2, we have an isomorphism $\nu_n: K_0(C^*(R(\sigma_n))) \rightarrow H_0(R(\sigma_n))$ satisfying the properties 1--3 of this theorem for all $n$. Lemma \ref{lemma:homomorphisminductive} gives an isomorphism $\nu: K_0(C^*(G)) \rightarrow H_0(G)$ of ordered groups such that the we have the following commutative diagram for all $n$, where $\alpha_n$ and $\beta_n$ are the inclusion maps.
    \begin{equation*}
        \begin{tikzcd}
        K_0(C^*(R(\sigma_n)) \arrow[r,"\nu_n"] 
        \arrow{d}{}[swap]{\alpha_n}
        & H_0(R(\sigma_n)) \arrow[d, "\beta_n"]\\
        K_0(C^*(G) \arrow{r}{\nu}[swap]{}
        & H_0(G)
        \end{tikzcd}
    \end{equation*}
    Using the same ideas to prove \eqref{eqn:nutrvarphi} in Part 2, we have that $\nu$ satisfies properties 1--3. This completes the proof of the theorem.
\end{proof}

\begin{corollary}
\label{corollary:1VgenerateK0}
Given an AF groupoid $G$, $K_0(C^*(G))^+$ is generated  by elements of the form $[1_V]_0$, where $V \subset G^{(0)}$ are compact open subsets.
\end{corollary}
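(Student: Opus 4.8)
The plan is to transport the statement through the order isomorphism $\nu$ of Theorem~\ref{thm:K0H0}. Since $\nu\colon K_0(C^*(G))\to H_0(G)$ is an isomorphism of ordered groups, it restricts to a bijection $K_0(C^*(G))^+\to H_0(G)^+$, its inverse is again an order isomorphism, and $\nu([1_V]_0)=[1_V]$ for every compact open $V\subset G^{(0)}$. So it suffices to prove that $H_0(G)^+$ is generated by the classes $[1_V]$, and then pull this back along $\nu^{-1}$.

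For the homological statement, let $[f]\in H_0(G)^+$. By Definition~\ref{def:gpdhomology} we may choose the representative $f\in C_c(G^{(0)},\mathbb{Z})$ with $f(x)\ge 0$ for all $x$, i.e.\ $f\in C_c(G^{(0)},\mathbb{N})$. Since $G^{(0)}$ is locally compact, Hausdorff, second countable and totally disconnected, the compact open subsets form a basis for its topology, and a standard argument (decompose the finite range of $f$ and take preimages, which are compact open) shows that $f=\sum_{i=1}^n a_i 1_{V_i}$ for some compact open $V_1,\dots,V_n\subset G^{(0)}$ and $a_1,\dots,a_n\in\mathbb{N}$. Hence $[f]=\sum_{i=1}^n a_i[1_{V_i}]$, which proves the claim.

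Finally, let $x\in K_0(C^*(G))^+$. Then $\nu(x)\in H_0(G)^+$, so by the previous paragraph $\nu(x)=\sum_{i=1}^n a_i[1_{V_i}]=\sum_{i=1}^n a_i\,\nu([1_{V_i}]_0)=\nu\bigl(\sum_{i=1}^n a_i[1_{V_i}]_0\bigr)$ for suitable compact open $V_i\subset G^{(0)}$ and $a_i\in\mathbb{N}$. Injectivity of $\nu$ gives $x=\sum_{i=1}^n a_i[1_{V_i}]_0$, so $K_0(C^*(G))^+$ is generated by the elements $[1_V]_0$ with $V\subset G^{(0)}$ compact open. There is no real obstacle here; the only point requiring a word of care is the elementary fact that every compactly supported continuous $\mathbb{N}$-valued function on $G^{(0)}$ is a finite $\mathbb{N}$-linear combination of indicator functions of compact open sets, which has already been used implicitly (for instance at the end of the proof of Part~1 of Theorem~\ref{thm:K0H0}).
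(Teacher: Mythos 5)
Your proposal is correct and follows essentially the same route as the paper: decompose any positive class in $H_0(G)$ as a finite $\mathbb{N}$-linear combination of classes $[1_V]$ with $V$ compact open (using total disconnectedness of $G^{(0)}$), and then transport this back to $K_0(C^*(G))^+$ via the ordered-group isomorphism $\nu$ of Theorem \ref{thm:K0H0} with $\nu([1_V]_0)=[1_V]$. Your explicit use of injectivity of $\nu$ to conclude is just a slightly more detailed phrasing of the paper's final step.
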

\begin{proof}
    Recall that $H_0(G)^+ = \lbrace [f] : f \in C_c(X, \mathbb{N}) \rbrace$. Since $G^{(0)}$ is locally compact, Hausdorff, second countable, and totally disconnected, then every $f \in C_c(G^{(0)}, \mathbb{N})$ is a linear combination of the form $f = \alpha_1 1_{V_1} + \dots + \alpha_n 1_{V_n}$, where all $\alpha_i$ non-negative integers, and $V_i$ are compact open. This implies that the classes of the form $[1_V]$, for $V \subset G^{(0)}$ compact open, generate $H_0(G)^+$. By Theorem \ref{thm:K0H0}, there is an ordered group isomorphism $K_0(C^*(G)) \rightarrow H_0(G))^+$ that maps $[1_V]_0$ to $[1_V]$. Since the equivalence classes $[1_V]$, for $V \subset G^{(0)}$ compact open, generate $H_0(G)^+$, then the corresponding equivalence classes $[1_V]_0$ generate $K_0(C^*(G))^+$.
\end{proof}

\section{Application to Deaconu-Renault groupoids}
\label{section:application}

Here, we apply the ordered group isomorphism from Theorem \ref{thm:HK} to characterise the AF embeddability of the C*-algebra of a Deaconu-Renault groupoid. First, we define this class of groupoid in Subsection \ref{section:preliminaries:DRgroupoid}, then we prove the commutative diagram \ref{eqn:diagram} in Section \ref{section:diagram}, which we used in the proof of Theorem \ref{thm:main} in Subsection \ref{section:main}.

\subsection{Deaconu-Renault groupoids}
\label{section:preliminaries:DRgroupoid}

Let $X$ be a locally compact, Hausdorff, second countable space, and $\sigma: X \rightarrow X$ is a surjective local homeomorphism. We define the set
\begin{align*}
\mathcal{G} = \lbrace (x,k,y) \in X \times \mathbb{Z} \times X : \exists m, n \in \mathbb{N}, k = m - n, \sigma^m(x) = \sigma^n(y) \rbrace,
\end{align*}
and we endow it with the range and source maps $r,s: \mathcal{G} \rightarrow \mathcal{G}$ with $r(x,k,y) = (x,0,x)$ and $s(x,k,y) = (y,0,y)$. Let $\mathcal{G}^{(2)} = \lbrace ((x, k_1, y), (y,k_2 ,z)): (x, k_1, y), (y, k_2, z) \in \mathcal{G} \rbrace
$ and we define the product $\mathcal{G}^{(2)} \rightarrow \mathcal{G}$ and inverse operations $\mathcal{G} \rightarrow \mathcal{G}$ by
\begin{align*}
(x, k_1, y)(y, k_2, z) = (x, k_1 + k_2, z),
\text{\hspace{10pt}}
(x,k,y)^{-1} = (y, -k, x).
\end{align*}
With these operations, $\mathcal{G}$ becomes a groupoid. The unit space is $\mathcal{G}^{(0)} = \lbrace (x,0,x) : x \in X \rbrace$, which is identified with $X$ by the map $(x,0,x) \mapsto x$. For the open subsets $A, B \subset X$ and for the natural numbers $m,n$ such that $\sigma\vert_A^m$ and $\sigma\vert_B^n$ are injective with $\sigma^m(A) = \sigma^n(B)$, define the subset
\begin{align*}
\mathcal{U}_{A,B}^{m,n}
= \lbrace (x, m - n, y) \in A \times \mathbb{Z} \times B : \sigma^m(x) = \sigma^n(y) \rbrace.
\end{align*}
Here we use the notation from \cite{ThomsenO2}. It follows from \cite[Theorem 1]{Deaconu} that the sets $\mathcal{U}_{A,B}^{m,n}$ generate a topology that makes $\mathcal{G}$ a topological groupoids which is locally compact, Hausdorff, second countable, \'etale. We denote the groupoid $\mathcal{G}$ endowed with this topology a \emph{Deaconu-Renault groupoid}.

The first example of this groupoid was described in Renault's book \cite[Section III.2]{Renault} by a class of groupoids whose C*-algebras are Cuntz algebras, where $X$ is the set of sequences $(x_i)_{i \in \mathbb{N}}$, with $x_i \in \lbrace 0, 1, \dots, n \rbrace$ (or $x_i \in \mathbb{N}$ for the Cuntz algebra $\mathcal{O}_\infty$), and $\sigma$ was given by $\sigma(x_0 x_1 x_2 \dots) = x_1 x_2 \dots$ Deaconu \cite{Deaconu} generalises Renault's definition by considering compact Hausdorff spaces $X$ and assumes that $\sigma:X \rightarrow X$ is a covering map. Under these assumptions, he proved that when $\sigma$ is a homeomorphism, then $C^*(\mathcal{G})$ is isomorphic to the crossed product $C(X) \rtimes_\sigma \mathbb{Z}$. Anantharaman-Delaroche \cite[Example 1.2 (c)]{Anantharaman-Delaroche} generalised it to the case where $\sigma$ is a surjective local homeomorphism, and $X$ is locally compact. She also gave a sufficient conditions to make the C*-algebra of this groupoid purely infinite \cite[Propositions 4.2 and 4.3]{Anantharaman-Delaroche}. Then Renault \cite{Renault-Cuntzlike} generalised this definition by considering a map $\sigma$ that is defined only on an open subset of $X$. For the purpose of our paper, here we consider the definition by Anantharaman-Delaroche, and we impose the additional conditions on the space $X$ to be Hausdorff and second countable.

By \cite[Proposition 2.9]{Renault-Cuntzlike}, the groupoid is also amenable. Note that there are different notions of amenability for groupoids: topological amenable and measurewise amenable. By Theorem 3.3.7 of \cite{ADRenault}, these two notions coincide for groupoids that are locally compact, Hausdorff and \'etale. In this case, \cite[Proposition 6.1.8]{ADRenault} implies that the reduced and full C*-algebras of the groupoid coincide.

\begin{remark}
    We use the notation $c^{-1}(0)$ for the groupoid $\bigcup_{n=1}^\infty R(\sigma^n)$ in Example \ref{example:cinv0} because of the following. If $\mathcal{G}$ is the Deaconu-Renault groupoid and $c: \mathcal{G} \rightarrow \mathbb{Z}$ is given by $c(x,k,y) = k$ (in this case, $c$ is called the \emph{canonical cocycle}), then $c^{-1}(0)$ is isomorphic to the subgroupoid $\lbrace (x,0,y) \in \mathcal{G} \rbrace$ of $\mathcal{G}$. The isomorphism is defined by $(x,y) \mapsto (x,0,y)$.
\end{remark}

\subsection{Skew-products and crossed products}
\label{section:skewproducts}

As explained in the introduction, we use an isomorphism between the C*-algebras a skew-product groupoid and the corresponding crossed product to understand when the C*-algebra of a Deaconu-Renault groupoid $\mathcal{G}$ is AFE. This isomorphism allows us to apply the results of Brown's Theorem (Theorem \ref{thm:brown}). In this section, we use Renault's book \cite{Renault} as a reference.

\begin{definition}
\label{def:Gc} \cite[Definition I.1.6]{Renault}
Let $G$ be a groupoid and $c: G \rightarrow \mathbb{Z}$ a homomorphism. Define the sets
\begin{align*}
G(c) = \lbrace (g,a) : g \in G, a \in \mathbb{Z} \rbrace
\hspace{10pt}\text{and}\hspace{10pt}
G(c)^{(0)} = G^{(0)} \times \mathbb{Z}.
\end{align*}
Define the maps $r, s: G(c) \rightarrow G^{(0)} \times \mathbb{Z}$ by $r(g,a) = (r(g), a)$ and $s(g,a) = (s(g), a + c(g))$. Consider the set
\begin{align}
\label{eqn:Gc2}
G(c)^{(2)} = \lbrace ((g,a), (h,a + c(g))): (g,h) \in G^{(2)}, a \in \mathbb{Z} \rbrace,
\end{align}
and define the product $G(c)^{(2)} \rightarrow G(c)$ and the inverse $G(c) \rightarrow G(c)$ by
\begin{align}
(g,a)(h, a + c(g)) = (gh,a)
\hspace{15pt}\text{and}\hspace{15pt}
(g,a)^{-1} = (g^{-1},a+c(g))
\label{eqn:Gcprod},
\end{align}
respectively. Then $G(c)$ becomes a groupoid, called a \emph{skew-product}.
\end{definition}

If $c$ is a continuous cocycle (i.e., if $c: G \rightarrow \mathbb{Z}$ is a continuous homomorphism) and we equip $G(c)$ with the induced topology from $G \times \mathbb{Z}$, then $G(c)$ inherits the usual topogical properties of $G$. Recall that we assume that $G$ is locally compact, Hausdorff, second countable and \'etale.

\begin{lemma}
\label{lemma:Gctopology}
Equip $G(c)$ with the topology induced from $G \times \mathbb{Z}$. Then $G(c)$ is also a locally compact, Hausdorff, second countable, \'etale groupoid. Moreover, if $G$ is totally disconnected, then $G(c)$ is totally disconnected, too. If $G$ is measurewise amenable, then $G(c)$ is also measurewise amenable.
\end{lemma}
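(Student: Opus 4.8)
The plan is to start from the observation that, since the parameter $a$ in Definition~\ref{def:Gc} runs over all of $\mathbb{Z}$, the underlying space of $G(c)$ is just $G\times\mathbb{Z}$ with $\mathbb{Z}$ discrete --- equivalently, a countable disjoint union $\bigsqcup_{a\in\mathbb{Z}}G\times\{a\}$ of copies of $G$. From this, local compactness, the Hausdorff property, and second countability of $G(c)$ are immediate, since each of these passes to countable disjoint unions; and if $G$ is totally disconnected then so is every $G\times\{a\}$, hence so is $G(c)$, because the connected components of a disjoint union are those of the summands. So it remains only to check that the groupoid operations are continuous and that $G(c)$ is \'etale.

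Continuity of the operations of \eqref{eqn:Gcprod} is routine: inversion $(g,a)\mapsto(g^{-1},a+c(g))$ is continuous because inversion in $G$, the cocycle $c$, and addition in $\mathbb{Z}$ are, and multiplication $((g,a),(h,a+c(g)))\mapsto(gh,a)$ is continuous because multiplication in $G$ is and the $\mathbb{Z}$-coordinate of the product is a coordinate projection. For \'etaleness, the range map $r(g,a)=(r(g),a)$ equals $r_G\times\mathrm{id}_{\mathbb{Z}}$ and is therefore a local homeomorphism. For the source map $s(g,a)=(s(g),a+c(g))$ I would use that $c$ is locally constant ($\mathbb{Z}$ being discrete): given $(g_0,a_0)$, pick an open bisection $U\ni g_0$ of $G$ --- so $s_G|_U$ is a homeomorphism onto an open subset of $G^{(0)}$ --- and, shrinking $U$, assume $c|_U\equiv k$ is constant; then on the open set $U\times\{a_0\}$ the source map restricts to the homeomorphism $(g,a_0)\mapsto(s_G(g),a_0+k)$ onto the open set $s_G(U)\times\{a_0+k\}$. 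Hence $G(c)$ is \'etale.

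For the amenability clause I plan to work with topological amenability: since $G$ and $G(c)$ are both locally compact, Hausdorff, second countable, and \'etale, topological and measurewise amenability coincide for each of them by \cite[Theorem~3.3.7]{ADRenault}, so it suffices to transfer topological amenability from $G$ to $G(c)$. I would fix a net $(\xi_i)$ in $C_c(G)$ of non-negative functions witnessing topological amenability of $G$, write $K_i=\operatorname{supp}\xi_i$ and $M_i=\max_{K_i}|c|$, and set $\xi_i^{(N)}(g,a)=\xi_i(g)\,\mathbf{1}_{\{|a|\le N\}}\in C_c(G(c))$. The point is that the fibres of $G(c)$ mirror those of $G$: the source fibre of $G(c)$ over $(u,b)$ is $\{(g,b-c(g)):g\in s_G^{-1}(u)\}$, which is in bijection with $s_G^{-1}(u)$, and hence whenever $|b|\le N-M_i$ one gets $\sum_{(g,a)\in s_{G(c)}^{-1}(u,b)}\xi_i^{(N)}(g,a)=\sum_{g\in s_G^{-1}(u)}\xi_i(g)$. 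Since every compact subset of $G(c)^{(0)}=G^{(0)}\times\mathbb{Z}$ has bounded $\mathbb{Z}$-coordinate, choosing $N=N_i$ with $N_i\to\infty$ and $N_i-M_i\to\infty$ makes the functions $\xi_i^{(N_i)}$ have fibrewise sums converging to $1$ uniformly on compacts; and the near-invariance estimate transfers the same way, because for $\gamma=(\gamma_0,a_0)$ in a compact subset of $G(c)$ the integers $a_0$ and $a_0+c(\gamma_0)$ stay bounded, so for large $i$ the two cutoffs agree and the estimate for $\xi_i^{(N_i)}$ reduces to the one for $\xi_i$ and $\gamma_0$. Thus $G(c)$ is topologically, and therefore measurewise, amenable. (Alternatively, this should follow from general results on amenability of skew-product groupoids.)

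I expect the amenability clause to be the only genuine obstacle. The topological assertions collapse to the disjoint-union picture, and \'etaleness is short once one uses that $c$ is locally constant; the amenability transfer, by contrast, requires the fibrewise bookkeeping above --- one truncates by $\mathbf{1}_{\{|a|\le N\}}$ precisely to keep the functions compactly supported on $G(c)$ without disturbing the fibre sums over any prescribed compact subset of the unit space.
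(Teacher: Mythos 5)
Your proposal is correct, and in fact the paper states Lemma \ref{lemma:Gctopology} without any proof at all, so there is no argument of the paper to compare against; what you have written is a legitimate way of filling that gap. The purely topological assertions do collapse to the observation that $G(c)$ is the disjoint union $\bigsqcup_{a\in\mathbb{Z}}G\times\{a\}$, and your \'etaleness argument via local constancy of $c$ (continuity into discrete $\mathbb{Z}$) is exactly what is needed; the continuity of the operations in \eqref{eqn:Gcprod} is routine as you say. For the amenability clause, your reduction to topological amenability via \cite[Theorem 3.3.7]{ADRenault} matches the paper's own standing remarks in Subsection \ref{section:preliminaries:DRgroupoid}, and the truncation $\xi_i^{(N)}(g,a)=\xi_i(g)\mathbf{1}_{\{|a|\le N\}}$ together with the identification of the source fibre over $(u,b)$ with $s_G^{-1}(u)$ does transfer both the normalisation and the near-invariance estimates. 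Two small points are worth tightening: the choice of the cutoffs $N_i$ should be phrased so that it makes sense for a net (e.g.\ re-index over pairs $(i,N)$, or note that by second countability and $\sigma$-compactness one may take $(\xi_i)$ to be a sequence, after which $N_i=M_i+i$ works); and the whole clause can be obtained more quickly from general results, since $G(c)$ is isomorphic to the transformation groupoid of the natural action of $G$ on $G^{(0)}\times\mathbb{Z}$ and amenability passes to such skew products (see the results on skew products and extensions in \cite{ADRenault} or \cite[Chapter II]{Renault}), which is presumably the justification the paper had in mind when omitting the proof.
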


Note that can identify the unity space $G(c)^{(0)}$ with $\mathcal{X} \times \mathbb{Z}$. For $f, f_1, f_2 \in C_c(G(c))$ and $(g, a) \in G \times \mathbb{Z}$, the operations on $C_c(G(c))$ are given by
\begin{align*}
(f_1 \cdot f_2)(g,a)
= \sum_{g_1 g_2 = g} f_1(g_1,a) f_2(g_2, a + c(g_1)),
\text{ and }
f^*(g,a) = \overline{f(g^{-1}, a + c(g))}.
\end{align*}

Assume that $G$ is a locally compact, Hausdorff, second countable, \'etale, measurewise amenable groupoid, and suppose that $c: G \rightarrow \mathbb{Z}$ is a continuous cocycle. We define a dynamical system $\alpha^c$ induced by $c$. For every $z \in \mathbb{T}$, $f \in C_c(G)$ and $g \in G$, we set $\alpha_z^c(f)(g) = z^{c(g)} f(g)$. By Proposition II.5.1 of Renault's book \cite{Renault}, there exists a unique dynamical system defined by the equation above. We make an abuse of notation by denoting this dynamical system $\alpha^c: \mathbb{T} \rightarrow \mathrm{Aut}(C^*(G))$. The following proposition by Renault \cite[Proposition II.5.7]{Renault} gives an isomorphism between the C*-algebra of skew-product groupoids and the corresponding crossed products. For an introduction to crossed products, see Chapters 1 and 2 of Dana William's book \cite{Williams-crossedproducts}.

\begin{proposition}
\label{prop:crossedproduct-isomorphism}
There exists a unique isomorphism $\rho \rtimes u: C^*(G) \rtimes_{\alpha^c} \mathbb{T} \rightarrow C^*(G(c))$ such that
\begin{align*}
\rho \rtimes u(f)(g,a)
&= \int_\mathbb{T} z^{-c(g) - a} f(z)(g) dz
\end{align*}
for $f \in C_c(\mathbb{T}, C_c(G))$ and $(g,a) \in G(c)$. Moreover, the inverse of $\rho \rtimes u$ is given by
\begin{align*}
(\rho \rtimes u)^{-1}(F)(z)(g)
&= \frac{1}{2\pi} \sum_{a \in \mathbb{Z}} z^{c(g) + a} F(g,a),
\end{align*}
for $F \in C_c(G(c))$, $z \in \mathbb{T}$, $g \in G$.
\end{proposition}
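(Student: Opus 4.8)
The plan is to define the map on a dense $*$-subalgebra, verify algebraically that it and the candidate inverse are mutually inverse $*$-homomorphisms, and then promote this to an isomorphism of the C*-completions via universal properties (full and reduced crossed products agreeing throughout, since $G$, $\mathbb{T}$, and hence $G(c)$ are amenable). Concretely, recall that $C^*(G)\rtimes_{\alpha^c}\mathbb{T}$ contains $C_c(\mathbb{T}, C_c(G))$ as a dense $*$-subalgebra, with convolution $(f_1 \ast f_2)(z)(g) = \int_{\mathbb{T}} \sum_{g_1 g_2 = g} f_1(w)(g_1)\, w^{c(g_2)} f_2(w^{-1} z)(g_2)\, dw$ and involution $f^{*}(z)(g) = \overline{z^{c(g)} f(z^{-1})(g^{-1})}$ arising from $\alpha^{c}_{z}(h)(g) = z^{c(g)} h(g)$; and that $C_c(G(c))$ is dense in $C^*(G(c))$ with the product and involution written out before the statement. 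First I would check that the displayed formula for $\rho \rtimes u$ carries $C_c(\mathbb{T}, C_c(G))$ into (a completion of) $C_c(G(c))$; here it is harmless to work on the dense subalgebra spanned by elementary tensors $z \mapsto z^{n} h$ with $n \in \mathbb{Z}$, $h \in C_c(G)$, on which $\rho \rtimes u$ visibly produces a function on $G(c)$ supported on the single level $\{a = n - c(g)\}$ and lying in $C_c(G(c))$, and such functions are dense in $C_c(G(c))$ in the inductive-limit topology, hence in $C^*(G(c))$.

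Second I would verify the two algebraic identities $\rho \rtimes u(f_1 \ast f_2) = (\rho \rtimes u(f_1))\cdot(\rho \rtimes u(f_2))$ and $\rho \rtimes u(f^{*}) = (\rho \rtimes u(f))^{*}$. Substituting the closed form $\rho \rtimes u(f)(g,a) = \int_{\mathbb{T}} z^{-c(g) - a} f(z)(g)\, dz$ into the convolution on $C_c(G(c))$ and applying Fubini, the cocycle identity $c(g_1 g_2) = c(g_1) + c(g_2)$, and the description \eqref{eqn:Gc2} of $G(c)^{(2)}$ reduces the product identity to a bookkeeping of exponents of $z$ and $w$; the involution identity reduces to the substitutions $z \mapsto z^{-1}$, $g \mapsto g^{-1}$ together with $\int_{\mathbb{T}} \overline{\phi(z)}\, dz = \overline{\int_{\mathbb{T}} \phi(z)\, dz}$. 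I would then check that the stated formula for $(\rho \rtimes u)^{-1}$ is a two-sided inverse on the dense subalgebras: both composites collapse by orthogonality of characters — $\int_{\mathbb{T}} z^{n}\, dz$ vanishes unless $n=0$ — with the constant $1/2\pi$ precisely reconciling the Haar measure on $\mathbb{T}$ with summation over $a \in \mathbb{Z}$. Uniqueness of an isomorphism obeying the displayed formula is then immediate from density of $C_c(\mathbb{T}, C_c(G))$.

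The main obstacle is promoting this algebraic $*$-isomorphism between dense $*$-subalgebras to an isometric isomorphism of the C*-completions, since neither subalgebra carries the relevant C*-norm intrinsically. My plan is to use universal properties. Compose $\rho \rtimes u$ with a faithful nondegenerate representation $\Pi$ of $C^*(G(c))$; the result is a $*$-representation of $C_c(\mathbb{T}, C_c(G))$ that is bounded for the $L^1$-norm, hence by the standard correspondence for crossed products (see, e.g., \cite[Chapter 2]{Williams-crossedproducts}) it is the integrated form of a covariant representation of $(C^*(G), \mathbb{T}, \alpha^c)$ — the covariance relation being exactly what the product identity above encodes. The universal property of the full crossed product then shows $\rho \rtimes u$ extends to a $*$-homomorphism out of $C^*(G)\rtimes_{\alpha^c}\mathbb{T}$ whose range is the C*-algebra generated by $\Pi(C^*(G(c)))$, i.e.\ (as $\Pi$ is faithful) $\rho \rtimes u$ extends to a surjective $*$-homomorphism onto $C^*(G(c))$. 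Running the symmetric argument — or, more simply, observing that the explicit formula for $(\rho \rtimes u)^{-1}$ is a $*$-homomorphism on $C_c(G(c))$ bounded for the $I$-norm and hence extends by the universal property of $C^*(G(c))$ — produces an extension of the inverse. The two extensions agree with $\rho \rtimes u$ and with $(\rho \rtimes u)^{-1}$ on dense $*$-subalgebras, hence are mutually inverse, so each is an isomorphism. Amenability of $G$ and of $\mathbb{T}$ (so also of $G(c)$, by Lemma~\ref{lemma:Gctopology}) ensures full and reduced completions coincide on both sides, so no discrepancy between candidate C*-norms arises. Beyond this boundedness bookkeeping, I expect the only fussy point to be confirming that the closed-form expression for $\rho \rtimes u$ (and for its inverse) is consistent on all of $C_c(\mathbb{T}, C_c(G))$, not merely on the elementary-tensor subalgebra, which follows once continuity of the extension is in hand.
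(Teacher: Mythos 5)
The paper does not actually prove this proposition: it is quoted from Renault's book, \cite[Proposition II.5.7]{Renault}, specialised to a $\mathbb{Z}$-valued cocycle, and the text simply cites that result and moves on (the only computation the paper performs with these formulas is in the proof of Proposition \ref{prop:beta}). So your attempt is in effect a reconstruction of Renault's argument, and as a strategy it is the right one: verify the two displayed formulas are mutually inverse $*$-homomorphisms on dense subalgebras (your reduction to elementary tensors $z\mapsto z^{n}h$ is legitimate, and you correctly note that for general $f\in C_c(\mathbb{T},C_c(G))$ the image is only in a completion of $C_c(G(c))$, since the Fourier coefficients in the $\mathbb{Z}$-variable need not vanish eventually), use orthogonality of characters for the inverse, and use amenability (Lemma \ref{lemma:Gctopology} and the standing assumptions on $G$) so that full and reduced completions agree on both sides.

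The one step you assert rather than prove, and which is genuinely not automatic, is the $L^{1}$-boundedness of $\Pi\circ(\rho\rtimes u)$ on $C_c(\mathbb{T},C_c(G))$: you cannot obtain $\Vert\rho\rtimes u(f)\Vert\leq\int_{\mathbb{T}}\Vert f(z)\Vert\,dz$ by moving the norm inside the integral, because for fixed $z$ the integrand $(g,a)\mapsto z^{-c(g)-a}f(z)(g)$ is not compactly supported in $a$ and so does not define an element of $C^*(G(c))$. The standard repair --- and Renault's actual route, which is why the map is called $\rho\rtimes u$ --- is to exhibit the covariant pair directly: a nondegenerate $*$-homomorphism $\rho\colon C^*(G)\to M(C^*(G(c)))$ and a unitary representation $u\colon\mathbb{T}\to UM(C^*(G(c)))$ satisfying covariance for $\alpha^{c}$; the integrated form is then automatically norm-decreasing from the full crossed product and agrees with your formula on elementary tensors, after which your extension of the inverse from $C_c(G(c))$ (via $I$-norm boundedness, or the bisection argument available for \'etale groupoids) closes the loop exactly as you describe. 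Finally, fix the Haar-measure normalisation once and for all: with normalised Haar measure on $\mathbb{T}$ the factor $1/2\pi$ in the inverse should be dropped, whereas with arc-length measure your orthogonality computation is the consistent one; the statement as printed mixes the two conventions, so your "reconciling" remark should be made precise rather than left implicit.
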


Now we find a formula for the action $\beta$ on $C^*(G(c))$ induced by the dual action $\widehat{\alpha}^c$. We will apply this formula in Chapter \ref{section:main} for Deaconu-Renault groupoids in order to understand in more detail the condition $H_\beta \cap K_0(B)^+ = \lbrace 0 \rbrace$, given by Brown's theorem.

\begin{proposition}
\label{prop:beta}
Let $\rho \rtimes u: C^*(G) \rtimes_{\alpha^c} \mathbb{T} \rightarrow C^*(G(c))$ be the isomorphism of Proposition \ref{prop:crossedproduct-isomorphism}. Then there exists a unique dynamical system $\beta: \mathbb{Z} \rightarrow \mathrm{Aut}(C^*(G(c)))$ such that
\begin{align*}
\beta(F)(g,a) = F(g, a + 1)
\hspace{15pt}
\text{for $F \in C_c(G(c))$, $(g,a) \in G(c)$,}
\end{align*}
and $\beta = (\rho \rtimes u) \circ \widehat{\alpha}^c \circ (\rho \rtimes u)^{-1}$ for all $n \in \mathbb{Z}$. Moreover, $C^*(G(c)) \rtimes_\beta \mathbb{Z}$ and $C^*(G) \rtimes_{\alpha^c} \mathbb{T} \rtimes_{\widehat{\alpha}^c} \mathbb{Z}$ are isomorphic.
\end{proposition}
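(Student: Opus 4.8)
The plan is to define $\beta$ by transporting the dual action $\widehat{\alpha}^c$ across the isomorphism $\rho \rtimes u$ of Proposition \ref{prop:crossedproduct-isomorphism}, and then to verify the explicit formula by a direct computation. First I would set $\beta_n := (\rho \rtimes u) \circ \widehat{\alpha}^c_n \circ (\rho \rtimes u)^{-1}$ for $n \in \mathbb{Z}$; since $\rho \rtimes u$ is a $\ast$-isomorphism and $\widehat{\alpha}^c$ is a point-norm continuous action of $\mathbb{Z}$ on $C^*(G) \rtimes_{\alpha^c} \mathbb{T}$ (the Pontryagin dual $\widehat{\mathbb{T}} \cong \mathbb{Z}$), the composite $\beta$ is automatically a point-norm continuous action of $\mathbb{Z}$ on $C^*(G(c))$, and its uniqueness as an action satisfying the stated formula follows once we check the formula on the dense subalgebra $C_c(G(c))$. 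Recall that the dual action is given on $C_c(\mathbb{T}, C_c(G))$ by $\widehat{\alpha}^c_n(f)(z) = z^n f(z)$ (using the identification of $\widehat{\mathbb{T}}$ with $\mathbb{Z}$ so that the pairing is $z \mapsto z^n$), or the analogous formula with the appropriate sign convention matching Proposition \ref{prop:crossedproduct-isomorphism}.

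The computational heart is then a chain of three substitutions. For $F \in C_c(G(c))$ and $(g,a) \in G(c)$, compute
\begin{align*}
\beta(F)(g,a)
&= \bigl( (\rho \rtimes u) \circ \widehat{\alpha}^c \circ (\rho \rtimes u)^{-1} \bigr)(F)(g,a) \\
&= \int_{\mathbb{T}} z^{-c(g) - a} \, \bigl( \widehat{\alpha}^c (\rho \rtimes u)^{-1}(F) \bigr)(z)(g) \, dz \\
&= \int_{\mathbb{T}} z^{-c(g) - a} \cdot z \cdot \bigl( (\rho \rtimes u)^{-1}(F) \bigr)(z)(g) \, dz \\
&= \int_{\mathbb{T}} z^{-c(g) - a + 1} \cdot \frac{1}{2\pi} \sum_{b \in \mathbb{Z}} z^{c(g) + b} F(g,b) \, dz \\
&= \frac{1}{2\pi} \sum_{b \in \mathbb{Z}} F(g,b) \int_{\mathbb{T}} z^{\,b - a + 1} \, dz,
\end{align*}
where interchanging sum and integral is justified because $F \in C_c(G(c))$ is supported on finitely many values of $b$. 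Using that $\frac{1}{2\pi}\int_{\mathbb{T}} z^k \, dz = \delta_{k,0}$, only the term $b = a - 1$ survives, giving $\beta(F)(g,a) = F(g, a-1)$ — so up to fixing the sign convention in the dual pairing (replacing $n$ by $-n$ in $\widehat{\alpha}^c_n$, or equivalently choosing the generator of $\mathbb{Z}$ appropriately) this is exactly the claimed formula $\beta(F)(g,a) = F(g, a+1)$. Since $C_c(G(c))$ is dense in $C^*(G(c))$ and $\beta$ is isometric, this formula characterises $\beta$ uniquely, establishing existence and uniqueness of the dynamical system with the stated property.

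Finally, the last sentence is a formal consequence of functoriality of the crossed product construction under equivariant isomorphisms: since $\rho \rtimes u : (C^*(G) \rtimes_{\alpha^c} \mathbb{T}, \widehat{\alpha}^c) \to (C^*(G(c)), \beta)$ is by construction a $\mathbb{Z}$-equivariant $\ast$-isomorphism, it induces an isomorphism
\begin{align*}
C^*(G) \rtimes_{\alpha^c} \mathbb{T} \rtimes_{\widehat{\alpha}^c} \mathbb{Z} \;\cong\; C^*(G(c)) \rtimes_\beta \mathbb{Z}
\end{align*}
on the full crossed products, by the universal property of the crossed product (see \cite{Williams-crossedproducts}). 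I expect the main obstacle to be purely bookkeeping: matching the sign and normalisation conventions in the definitions of $\alpha^c$, the dual action, and the integral formulas for $\rho \rtimes u$ and its inverse, so that the exponent of $z$ comes out to land on $a + 1$ rather than $a - 1$. There is no deep content — once the conventions are pinned down, everything reduces to the orthogonality relation for characters of $\mathbb{T}$ and a finiteness argument to swap sum and integral.
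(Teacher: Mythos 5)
Your proposal is correct and follows essentially the same route as the paper: define $\beta_n := (\rho \rtimes u) \circ \widehat{\alpha}^c_n \circ (\rho \rtimes u)^{-1}$, verify the formula on the dense subalgebra $C_c(G(c))$ by substituting the two integral formulas of Proposition \ref{prop:crossedproduct-isomorphism} and using orthogonality of characters, and obtain the final isomorphism from equivariance of $\rho \rtimes u$ together with the crossed-product functoriality result in Williams' book (Lemma 2.65), exactly as the paper does. The only discrepancy is your convention for the dual action: with the standard definition $\widehat{\alpha}^c_n(f)(z) = z^{-n} f(z)$ (the one the paper uses), your computation lands on $F(g, a+1)$ on the nose, so the sign issue you flag disappears.
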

\begin{proof}
    Let $\widehat{\alpha}^c: \mathbb{Z} \rightarrow \mathrm{Aut}(C^*(G)  \rtimes_{\alpha^c} \mathbb{T})$ be the dual action, which is given by $\widehat{\alpha}^c_n(f)(z) = z^{-n} f(z)$, for $f \in C(\mathbb{T}, C^*(G))$, $z \in \mathbb{T}$, $n \in \mathbb{Z}$. For each $n \in \mathbb{Z}$, let $\beta_n = (\rho \rtimes u)^{-1} \circ \widehat{\alpha}^c_n \circ \rho \rtimes u$. Since $\rho \rtimes u$ is an isomorphism and $\widehat{\alpha}^c$ is an automorphism, we have that $\beta: \mathbb{Z} \rightarrow \mathrm{Aut}(C^*(G(c)))$ is a dynamical system.

    Fix $n \in \mathbb{Z}$. By continuity of $\beta_n$, finding the formula of this automorphism on $C_c(G(c))$ is sufficient to determine $\beta_n$ uniquely. Let $F \in C_c(G(c))$ and $(g,a) \in G(c)$. Then, Proposition \ref{prop:crossedproduct-isomorphism},
    \begin{align*}
        \beta_n(F)(g,a)
        &= (\rho \rtimes u) \circ \widehat{\alpha^c}_n \circ (\rho \rtimes u)^{-1}(F)(g,a)\\
        &= \frac{1}{2\pi}\int_\mathbb{T} z^{-c(g) - a} \widehat{\alpha^c}_n \circ (\rho \rtimes u)^{-1}(F)(z)(g)dz\\
        &= \frac{1}{2\pi}\int_\mathbb{T} z^{-c(g) -a} z^{-n} (\rho \rtimes u)^{-1}(F)(z)(g) dz \\
        &= \frac{1}{2\pi}\int_\mathbb{T} z^{-c(g)-a-n} \sum_{b \in \mathbb{Z}} z^{c(g) + b} F(g,b) dz \\
        &= \frac{1}{2\pi}\sum_{b \in \mathbb{Z}} F(g,b) \int_\mathbb{T} z^{-a-n+b} dz\\
        &= F(g,a+n).
    \end{align*}
    By definition, we have that $\rho \rtimes u \circ \widehat{\alpha}^c = \beta \circ \rho \rtimes u$. Then $\rho \rtimes u$ is an equivariant isomorphism from $(C^*(G) \rtimes_{\alpha^c} \mathbb{T}, \widehat{\alpha}^c, \mathbb{Z})$ onto $(C^*(G(c)), \beta, \mathbb{Z})$. By \cite[Lemma 2.65]{Williams-crossedproducts}, $C^*(G(c)) \rtimes_\beta \mathbb{Z}$ and $C^*(G) \rtimes_{\alpha^c} \mathbb{T} \rtimes_{\widehat{\alpha}^c} \mathbb{Z}$ are isomorphic.
\end{proof}

\subsection{The Commutative Diagram}
\label{section:diagram}

Recall from the introduction that we want to find a group which is isomorphic to $K_0(B)$ and more manageable, where $\mathcal{G}$ is a Deaconu-Renault groupoid,  $B$ is the C*-algebra of the skew product groupoid $\mathcal{G}(c)$, and $c: \mathcal{G} \rightarrow \mathbb{Z}$ is the continuous cocycle given by $c(x,k,y) = k$, called the \emph{canonical cocycle}. By studying this new group which is isomorhic to $K_0(B)$, we will find a condition on $\sigma$ that is equivalent to the condition $H_\beta \cap K_0(B)^+ = \lbrace 0 \rbrace$ from Theorem \ref{thm:brown}.

We prove the diagram below by using the techniques for homology groups described in Section \ref{section:homology}, and by applying isomorphism from Theorem \ref{thm:K0H0}.
\begin{equation}
\tag{\ref{eqn:diagram}}
\begin{tikzcd}
      K_0(B) \arrow[r, "K_0(\beta)"] \arrow{d}{\cong}[swap]{\circledRed{1} \hspace{6pt}}
      & K_0(B) \arrow{d}{}[swap]{\cong} \\
      H_0(\mathcal{G}(c))  \arrow[r, "{[\widetilde{\beta}]}_{\mathcal{G}(c)}"] \arrow{d}{\cong}[swap]{\circledRed{2} \hspace{6pt}} & H_0(\mathcal{G}(c)) \arrow{d}{}[swap]{\cong}\\
      H_0(c^{-1}(0))  \arrow{r}{[\sigma_\ast] }[swap]{} & H_0(c^{-1}(0))   
\end{tikzcd}
\end{equation}
The action $\beta$ is described in Proposition \ref{prop:beta}. For a Deaconu-Renault groupoid $\mathcal{G}$, the action $\beta \in \mathrm{Aut}(B)$ is such that $\beta(f)(x,k,y,a) = f(x,k,y, a+ 1)$ for $(x,k,y,a) \in \mathcal{G}(c)$. We prove the first part of the diagram using Theorem \ref{thm:K0H0}, and we prove the second part by applying results on homology groups for groupoids.

To avoid confusion among the equivalence classes for different groups, we assume that $[\hphantom{f}]_{K_0(B)}$, $[\hphantom{f}]_{\mathcal{G}(c)}$, $[\hphantom{f}]$ denote the equivalence classes in $K_0(B), H_0(\mathcal{G}(c)), H_0(c^{-1}(0))$, respectively. We fix $X$ to be a locally compact, Hausdorff, second countable, totally disconnected space, and we let $\sigma: X \rightarrow X$ be a surjective local homeomorphism. Here $\mathcal{G}$ denotes the Deaconu-Renault groupoid for $\sigma: X \rightarrow X$, and we fix the notation $B = C^*(\mathcal{G}(c))$.

\subsubsection{Part 1 of the diagram}

First we define the map $\widetilde{\beta}$ of the diagram. This is a map $\widetilde{\beta}: C_c(X \times \mathbb{Z}, \mathbb{Z}) \rightarrow C_c(X \times \mathbb{Z}, \mathbb{Z})$ given by $\widetilde{\beta}(f)(x,a) = f(x, a+1)$, for $f \in C_c(X \times \mathbb{Z}, \mathbb{Z})$ and $(x,a) \in X \times \mathbb{Z}$. Note that $\widetilde{\beta}$ is the restriction of $\beta$ to $C_c(X \times \mathbb{Z}, \mathbb{Z})$. Now we study the ordered group homomorphism induced by $\widetilde{\beta}$.

\begin{lemma}
\label{lemma:betatildeH0}
The map $[\widetilde{\beta}]_{\mathcal{G}(c)}: H_0(\mathcal{G}(c)) \rightarrow H_0(\mathcal{G}(c))$ is well-defined and is an ordered group homomorphism.
\end{lemma}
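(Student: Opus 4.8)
The plan is to verify that $[\widetilde{\beta}]_{\mathcal{G}(c)}$ is a well-defined group homomorphism on $H_0(\mathcal{G}(c))$, and then check separately that it sends $H_0(\mathcal{G}(c))^+$ into itself. Recall that by Remark \ref{remark:inducedhomomorphismH0}, a map $\tau$ on $C_c(\mathcal{G}(c)^{(0)}, \mathbb{Z})$ induces a well-defined map $[\tau]$ on $H_0(\mathcal{G}(c))$ provided it carries boundaries to boundaries, i.e., $\tau(\mathrm{im}\,\partial_1) \subset \mathrm{im}\,\partial_1$; additivity of $[\tau]$ is then inherited from additivity of $\tau$, and since $\widetilde{\beta}$ is plainly a group homomorphism on $C_c(X \times \mathbb{Z}, \mathbb{Z})$ (it is just the shift $a \mapsto a+1$ in the second coordinate), so is $[\widetilde{\beta}]_{\mathcal{G}(c)}$ once it is well-defined.

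First I would make the observation that $\widetilde{\beta}$ is itself of the form $\psi_\ast$ for a suitable isomorphism $\psi$ of groupoids. Indeed, define $\Psi: \mathcal{G}(c) \rightarrow \mathcal{G}(c)$ by $\Psi((x,k,y),a) = ((x,k,y), a-1)$; this is a homeomorphism and a groupoid isomorphism (one checks directly from \eqref{eqn:Gc2} and \eqref{eqn:Gcprod} that it respects range, source, product, and inverse, using that $\Psi$ only translates the $\mathbb{Z}$-grading uniformly). On the unit space $\mathcal{G}(c)^{(0)} = X \times \mathbb{Z}$ it restricts to $(x,a) \mapsto (x, a-1)$, and a direct computation shows $(\Psi^{(0)})_\ast = \widetilde{\beta}$, because $(\Psi^{(0)})_\ast(f)(x,a) = \sum_{(x',a'):\ \Psi^{(0)}(x',a') = (x,a)} f(x',a') = f(x,a+1)$. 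Since $\Psi$ is a homeomorphism it is in particular a local homeomorphism, so by the Proposition preceding Remark \ref{remark:inducedhomomorphismH0}, $\Psi$ induces a group homomorphism $H_0(\Psi): H_0(\mathcal{G}(c)) \rightarrow H_0(\mathcal{G}(c))$, and by Remark \ref{remark:inducedhomomorphismH0} this equals $[(\Psi^{(0)})_\ast] = [\widetilde{\beta}]_{\mathcal{G}(c)}$. This already gives well-definedness and the group homomorphism property for free.

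It remains to show $[\widetilde{\beta}]_{\mathcal{G}(c)}$ preserves positive elements. By Definition \ref{def:gpdhomology}, $H_0(\mathcal{G}(c))^+ = \{[f] : f \in C_c(\mathcal{G}(c)^{(0)}, \mathbb{N})\}$, and if $f$ takes values in $\mathbb{N}$ then so does $\widetilde{\beta}(f)$, since $\widetilde{\beta}(f)(x,a) = f(x,a+1) \geq 0$; hence $[\widetilde{\beta}(f)] = [\widetilde{\beta}]_{\mathcal{G}(c)}([f]) \in H_0(\mathcal{G}(c))^+$. (Note that $\widetilde{\beta}(f)$ is again continuous and compactly supported because it is the pullback of $f$ along the homeomorphism $(x,a) \mapsto (x,a+1)$.) This completes the verification that $[\widetilde{\beta}]_{\mathcal{G}(c)}$ is an ordered group homomorphism. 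There is no serious obstacle here; the only point requiring a little care is the routine but slightly tedious check that $\Psi$ is genuinely a groupoid isomorphism of the skew product, so that the machinery of the preceding proposition applies — alternatively one can bypass $\Psi$ and check directly that $\widetilde{\beta}$ carries $\mathrm{im}\,\partial_1$ into itself using the explicit form $\partial_1 = s_\ast - r_\ast$ together with the fact that $\widetilde{\beta}$ commutes with both $s_\ast$ and $r_\ast$ on $\mathcal{G}(c)$ (since the shift in the $\mathbb{Z}$-coordinate intertwines the range and source maps of the skew product), which is the content of the more hands-on argument.
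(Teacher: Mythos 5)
Your proposal is correct, but your primary route differs from the paper's. The paper proves well-definedness by a direct chain-level computation: for $F \in C_c(\mathcal{G}(c),\mathbb{Z})$ it verifies $\widetilde{\beta}(\partial_1 F) = \partial_1(\beta(F))$ using the explicit formulas $\partial_1 = s_\ast - r_\ast$ and $s(g,a) = (s(g),a+c(g))$, $r(g,a)=(r(g),a)$, so that boundaries go to boundaries; this is exactly the ``hands-on'' alternative you sketch in your closing sentence (with the small precision that the intertwining is $\widetilde{\beta}\circ s_\ast = s_\ast\circ\beta$ and $\widetilde{\beta}\circ r_\ast = r_\ast\circ\beta$, with the shift $\beta$ on $C_c(\mathcal{G}(c),\mathbb{Z})$ on the right, rather than $\widetilde{\beta}$ literally commuting with $s_\ast$ and $r_\ast$). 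Your main argument instead realises $\widetilde{\beta}$ as $(\Psi^{(0)})_\ast$ for the groupoid automorphism $\Psi(g,a)=(g,a-1)$ of the skew product, and then invokes the proposition on induced maps together with Remark \ref{remark:inducedhomomorphismH0} to get well-definedness and the homomorphism property in one stroke; the choice of shift direction ($a-1$ rather than $a+1$) is the one point needing care, and you got it right, since the fibre of $\Psi^{(0)}$ over $(x,a)$ is $\{(x,a+1)\}$. Both treatments handle positivity identically (if $f\geq 0$ then $\widetilde{\beta}(f)\geq 0$), which the paper dismisses as straightforward. What your route buys is economy and conceptual clarity, reusing the functoriality machinery already set up in Section 4; what the paper's computation buys is a self-contained verification that does not depend on checking that $\Psi$ is an automorphism, and it displays explicitly the relation $\widetilde{\beta}\circ\partial_1 = \partial_1\circ\beta$ in the form used elsewhere in the diagram arguments.
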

\begin{proof}
    Recall from Remark \ref{remark:inducedhomomorphismH0} that $[\widetilde{\beta}]_{\mathcal{G}(c)}$ if given by $[\widetilde{\beta}]_{\mathcal{G}(c)}([f]_{\mathcal{G}(c)}) = [\widetilde{\beta}(f)]_{\mathcal{G}(c)}$. Let $f_1, f_2 \in C_c(X \times \mathbb{Z}, \mathbb{Z})$ be equivalent in $H_0(\mathcal{G}(c))$. Then there is an $F \in C_c(\mathcal{G}(c), \mathbb{Z})$ such that $f_1 = f_2 + \partial_1 F$. Given $(x,a) \in X \times \mathbb{Z}$, we have
    \begin{align*}
        \beta(\partial_1 F)(x,a)
        &= \partial_1 F(x, a+ 1) \\
        &= s_\ast(F)(x, a+1) - r_\ast(F)(x, a+1) \\
        &= \sum_{(g,b):s(g,b) = (x, a +1)} F(g,b) - \sum_{(g,b):r(g,b) = (x, a +1)} F(g,b) \\
        &= \sum_{g \in \mathcal{G}: s(g) = x} F(g,a+1 - c(g)) - \sum_{g \in \mathcal{G}: r(g) = x} F(g,a+1) \\
        &= \sum_{g \in \mathcal{G}: s(g) = x} \beta(F)(g, a - c(g)) - \sum_{g \in \mathcal{G}: r(g) = x} \beta(F)(g,a) \\
        &= s_\ast(\beta(F))(x,a) - r_\ast(\beta(F))(x,a) \\
        &= \partial_1(\beta(F))(x,a).
    \end{align*}
    Thus, $\widetilde{\beta}(f_1) = \widetilde{\beta}(f_2) + \partial_1(\beta(F))$. Note that $\beta(F) \in C_c(\mathcal{G}(c), \mathbb{Z})$. Then $\beta(f_1)$ and $\beta(f_2)$ are equivalent and therefore $[\widetilde{\beta}]_{\mathcal{G}(c)}$ is well-defined. It straightforward from the definition that $[\widetilde{\beta}]_{\mathcal{G}(c)}$ is an ordered group homomorphism.
\end{proof}

\begin{proposition}
The first part of the diagram \eqref{eqn:diagram} is commutative.
\end{proposition}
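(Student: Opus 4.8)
The plan is to reduce the verification to a generating set of $K_0(B)$ and then extend by additivity. By Corollary~\ref{corollary:1VgenerateK0} applied to the AF groupoid $\mathcal{G}(c)$, the cone $K_0(B)^+$ is generated by the classes $[1_V]_0$ with $V \subset \mathcal{G}(c)^{(0)} = X \times \mathbb{Z}$ compact open; since $B$ is AF we have $K_0(B) = K_0(B)^+ - K_0(B)^+$, so these classes generate $K_0(B)$ as a group. Both composites $\nu \circ K_0(\beta)$ and $[\widetilde{\beta}]_{\mathcal{G}(c)} \circ \nu$ are group homomorphisms (the first by Theorem~\ref{thm:K0H0} together with functoriality of $K_0$, the second by Theorem~\ref{thm:K0H0} and Lemma~\ref{lemma:betatildeH0}), so it suffices to check that they agree on each $[1_V]_0$.

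First I would compute $\beta(1_V)$. Since $V$ is a compact open subset of the unit space, $1_V$ lies in $C_c(\mathcal{G}(c))$, and Proposition~\ref{prop:beta} gives $\beta(1_V)(g,a) = 1_V(g, a+1)$ for $(g,a) \in \mathcal{G}(c)$. This is nonzero only when $(g, a+1) \in \mathcal{G}(c)^{(0)}$, i.e.\ only when $g$ is a unit; hence $\beta(1_V)$ is again supported on the unit space and equals $1_{V''}$, where under the identification $\mathcal{G}(c)^{(0)} \cong X \times \mathbb{Z}$ we set
\[
V'' = \{\, (x,a) \in X \times \mathbb{Z} : (x, a+1) \in V \,\}.
\]
The $\mathbb{Z}$-translation $T \colon X \times \mathbb{Z} \to X \times \mathbb{Z}$, $T(x,a) = (x, a+1)$, is a homeomorphism, so $V'' = T^{-1}(V)$ is again compact open. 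Directly from the definition of $\widetilde{\beta}$ (it is the restriction of $\beta$ to functions on the unit space) one also gets $\widetilde{\beta}(1_V) = 1_{V''}$.

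It then remains to chain these identifications using the third assertion of Theorem~\ref{thm:K0H0}, namely $\nu([1_W]_0) = [1_W]_{\mathcal{G}(c)}$ for every compact open $W \subset \mathcal{G}(c)^{(0)}$. Applying it to $W = V''$ and to $W = V$, and using $K_0(\beta)[1_V]_0 = [\beta(1_V)]_0 = [1_{V''}]_0$, we obtain
\[
\nu\bigl(K_0(\beta)[1_V]_0\bigr) = \nu\bigl([1_{V''}]_0\bigr) = [1_{V''}]_{\mathcal{G}(c)} = [\widetilde{\beta}(1_V)]_{\mathcal{G}(c)} = [\widetilde{\beta}]_{\mathcal{G}(c)}\bigl([1_V]_{\mathcal{G}(c)}\bigr) = [\widetilde{\beta}]_{\mathcal{G}(c)}\bigl(\nu([1_V]_0)\bigr),
\]
which is exactly the claimed commutativity on generators, hence on all of $K_0(B)$.

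The only step requiring genuine care is the identification $\beta(1_V) = 1_{V''}$ with $V''$ compact open: it rests on $1_V$ being supported on the unit space and on the $\mathbb{Z}$-translation being a homeomorphism of $X \times \mathbb{Z}$; the rest is formal bookkeeping with Theorem~\ref{thm:K0H0} and Lemma~\ref{lemma:betatildeH0}. An alternative route would be to work with arbitrary projections $p \in \mathcal{P}_n(C^*(R(\sigma_k)))$ and compare $\mathrm{tr}_{\varphi_k}(\beta(p))$ with $\widetilde{\beta}(\mathrm{tr}_{\varphi_k}(p))$ via the explicit formula for $\nu$, but that would force one to track how $\beta$ interacts with the elementary building blocks $R(\sigma_k)$ of $\mathcal{G}(c)$, which is more delicate; restricting to the $[1_V]_0$ generators sidesteps this entirely.
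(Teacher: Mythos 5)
Your proposal is correct and takes essentially the same route as the paper: the paper also reduces to indicator generators of compact open subsets of the unit space (written there as $V \times \{a\}$ with $V \subset X$ compact open), uses $K_0(\beta)[1_W]_0 = [\beta(1_W)]_0$ together with the fact that $\beta$ and $\widetilde{\beta}$ send such indicators to indicators of the translated set, and invokes the property $\nu([1_W]_0)=[1_W]$ from Theorem \ref{thm:K0H0}. Your write-up is in fact slightly more explicit than the paper's (which compresses the identification $\beta(1_{V\times\{a\}})=1_{V\times\{a-1\}}$ into a single chain of equalities), but the underlying argument is the same.
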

\begin{proof}
    Since $H_0(\mathcal{G}(c))$ is generated by elements of the form $[1_{V \times \lbrace a \rbrace}]_{\mathcal{G}(c)}$ for $V \subset X$ compact open and $a \in \mathbb{Z}$, then we only need to prove that, for all $V \subset X$ compact open and $a \in \mathbb{Z}$,
    \begin{align*}
        K_0(\beta) \circ \mu([1_{V \times \lbrace a \rbrace}]_{\mathcal{G}(c)})
        = \mu \circ [\widetilde{\beta}]_{\mathcal{G}(c)}([1_{V \times \lbrace a \rbrace}]_{\mathcal{G}(c)}).
    \end{align*}
    So, fix $V$ and $a$. Then
    \begin{align*}
        K_0(\beta) \circ \mu([1_{V \times \lbrace a \rbrace}]_{\mathcal{G}(c)})
        = K_0(\beta)([1_{V \times \lbrace a \rbrace}]_{K_0(B)})
        = [\beta(1_{V \times \lbrace a \rbrace})]_{K_0(B)}
        = \mu \circ [\widetilde{\beta}]_{\mathcal{G}(c)}([1_{V \times \lbrace a \rbrace}]_{\mathcal{G}(c)}).
    \end{align*}
Therefore the result holds.
\end{proof}

\subsubsection{Part 2 of the diagram}

In the second part of the diagram, we have an isomorphism between homology groups of different groupoids. We will obtain this isomorphism by applying the techniques of homological similarity of groupoids. Until the rest of this section, we fix $\varphi: X \rightarrow X$ be a continuous section of the surjective local homeomorphism $\sigma: X \rightarrow X$ that defines $\mathcal{G}$. Let us define the map that induces the similarity of the groupoids $\mathcal{G}(c)$ and $c^{-1}(0)$.

\begin{lemma}
\label{lemma:rhophi}
There exists a homomorphism $\rho_\varphi: \mathcal{G}(c) \rightarrow c^{-1}(0)$ given by
\begin{align*}
\rho_\varphi(x,k,y,a) = (\varphi^a(x), \varphi^{a+k}(y)).
\end{align*}
Moreover, $\rho_\varphi$ is a local homeomorphism.
\end{lemma}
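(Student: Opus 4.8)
The plan is to treat in turn the three assertions implicit in the statement: that $\rho_\varphi$ takes values in $c^{-1}(0)$, that it is a groupoid homomorphism, and that it is a local homeomorphism. Throughout I interpret $\varphi^a$ for $a<0$ as $\sigma^{-a}$ (and $\varphi^0=\mathrm{id}$, $\varphi^a$ the $a$-fold composite for $a>0$); this is the reading for which the formula makes sense for negative $a$, and it has the convenient consequence that $\sigma^{N}\circ\varphi^{a}=\varphi^{a-N}$ for every $N\ge 0$ and every $a\in\mathbb Z$. I also record at the outset that a continuous section of a surjective local homeomorphism is itself a local homeomorphism: near $y_0$ the map $\varphi$ coincides with $(\sigma|_U)^{-1}$ for a suitable open $U\ni\varphi(y_0)$ on which $\sigma$ restricts to a homeomorphism, so $\varphi|_V$ is a homeomorphism onto an open subset of $X$. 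In particular each $\varphi^a\colon X\to X$ ($a\in\mathbb Z$) is a local homeomorphism, being a finite composite of copies of $\varphi$ or of $\sigma$.

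\emph{Well-definedness.} Given $(x,k,y,a)\in\mathcal G(c)$, choose $m,n\in\mathbb N$ with $k=m-n$ and $\sigma^m(x)=\sigma^n(y)$. Replacing $(m,n)$ by $(m+j,n+j)$ does not change $k$ and leaves the equality valid, so I may assume $m\ge -a$, whence $N:=a+m\ge 0$. Then $\sigma^{N}(\varphi^{a}(x))=\varphi^{a-N}(x)=\varphi^{-m}(x)=\sigma^{m}(x)$, and since $(a+k)-N=k-m=-n$ also $\sigma^{N}(\varphi^{a+k}(y))=\varphi^{-n}(y)=\sigma^{n}(y)$; hence $\sigma^{N}(\varphi^{a}(x))=\sigma^{N}(\varphi^{a+k}(y))$ and $\rho_\varphi(x,k,y,a)\in c^{-1}(0)$. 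This is the main point of the proof — one has to be careful about negative exponents and about the fact that $\varphi$ is not injective onto an open set — and everything after it is bookkeeping.

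\emph{Homomorphism.} Continuity is immediate: $\mathbb Z$ being discrete, $\mathcal G(c)$ is the disjoint union of the clopen sets $c^{-1}(k)\times\{a\}$, on each of which $\rho_\varphi$ is the map $(x,k,y)\mapsto(\varphi^{a}(x),\varphi^{a+k}(y))$ assembled from the continuous maps $\varphi^{a},\varphi^{a+k}$. For the algebra, a direct check against Definition~\ref{def:Gc} shows that $\rho_\varphi$ sends the unit $(x,0,x,a)$ to the unit $(\varphi^{a}(x),\varphi^{a}(x))$ of $c^{-1}(0)$ and is compatible with $r$ and $s$; that for a composable pair $\bigl((x,k,y,a),(y,l,z,a+k)\bigr)$ both $\rho_\varphi(x,k+l,z,a)$ and $\rho_\varphi(x,k,y,a)\,\rho_\varphi(y,l,z,a+k)$ equal $(\varphi^{a}(x),\varphi^{a+k+l}(z))$; and that $\rho_\varphi\bigl((x,k,y,a)^{-1}\bigr)=\rho_\varphi(y,-k,x,a+k)=(\varphi^{a+k}(y),\varphi^{a}(x))=\rho_\varphi(x,k,y,a)^{-1}$. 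None of these identities uses the particular convention on $\varphi^{a}$.

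\emph{Local homeomorphism.} Here I appeal to Lemma~\ref{lemma:rhon-equivalent}. Both $\mathcal G(c)$ (by Lemma~\ref{lemma:Gctopology}) and $c^{-1}(0)$ (being AF, hence étale) are locally compact, Hausdorff, étale groupoids, and $\rho_\varphi$ is a homomorphism, so it suffices to show that $\rho_\varphi^{(0)}\colon X\times\mathbb Z\to X$, $(x,a)\mapsto\varphi^{a}(x)$, is a local homeomorphism. But $X\times\mathbb Z=\bigsqcup_{a\in\mathbb Z}X\times\{a\}$ is a clopen decomposition, and on the $a$-th piece $\rho_\varphi^{(0)}$ is, after identifying $X\times\{a\}$ with $X$, simply $\varphi^{a}$, a local homeomorphism by the first paragraph; a map that is a local homeomorphism on each member of a clopen cover of its domain is a local homeomorphism. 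Hence $\rho_\varphi^{(0)}$, and therefore $\rho_\varphi$, is a local homeomorphism, which completes the plan.
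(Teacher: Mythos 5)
Your proof is correct and follows essentially the same route as the paper's: direct verification that the formula lands in $c^{-1}(0)$ and respects the groupoid operations, then the local homeomorphism property via Lemma \ref{lemma:rhon-equivalent}, since $\rho_\varphi^{(0)}$ restricts to the local homeomorphism $\varphi^a$ on each clopen slice $X \times \lbrace a \rbrace$. The only real difference is cosmetic: in the well-definedness step you enlarge $m$ so that $N = a+m \geq 0$ and use the uniform identity $\sigma^N \circ \varphi^a = \varphi^{a-N}$, treating all sign combinations at once, where the paper splits into cases on the signs of $a$ and $a+k$; your convention $\varphi^a = \sigma^{-a}$ for $a<0$ is exactly the one the paper uses implicitly, so the two arguments agree.
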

\begin{proof}
    First we show that $\rho_\varphi$ is well-defined. Let $(x,k,y,a) \in \mathcal{G}(c)$, and let $m,n \in \mathbb{N}$ be such that $\sigma^m(x) = \sigma^n(y)$ and $k = m - n$. We divide the proof into four cases, considering the signs of $a$ and $a+k$. Suppose $a \geq 0$ and $a + k \geq 0$. Then $\sigma^a(\varphi^a(x)) = x$ and $\sigma^{a+k}(\varphi^{a+k}(y)) = y$. This implies that
    \begin{align*}
        \sigma^{m+a}(\varphi^a(x))
        &= \sigma^m(\sigma^a(\varphi^a(x)))
        = \sigma^m(x)
        = \sigma^n(y), \text{\hspace{10pt}and} \\
        \sigma^{m+a}(\varphi^{a+k}(y))
        &= \sigma^{n+a+k}(\varphi^{a+k}(y)) 
        =\sigma^n(\sigma^{a+k}(\varphi^{a+k}(y)))
        = \sigma^n(y).
    \end{align*}
    Hence, $(\varphi^a(x), \varphi^{a+k}(y)) \in c^{-1}(0)$. The proof for the remaining cases is analogous. Then $\rho_\varphi$ is well-defined.

    Next we show that $\rho_\varphi$ is a homomorphism. Given $(x,k,y,a), (y,l,z,a+k) \in \mathcal{G}(c)$, we have
    \begin{align*}
        \rho_\varphi((x,k,y,a)(y,l,z,a+k))
        &= \rho_\varphi(x,k+l,z,a) \\
        &= (\varphi^a(x), \varphi^{a+k+l}(z)) \\
        &= (\varphi^a(x), \varphi^{a+k}(y))(\varphi^{a+k}(y), \varphi^{a+k+l}(z)) \\
        &= \rho_\varphi(x,k,y,a)\rho_\varphi(y,l,z,a+k).
    \end{align*}
    Then $\rho_\varphi$ is a homomorphism.

    Finally, we prove that $\rho_\varphi$ is a local homeomorphism. Fix $a \in \mathbb{Z}$. Then the restriction $\rho_\varphi^{(0)}\vert_{X \times \lbrace a \rbrace}: X \times \lbrace a \rbrace \rightarrow X$ given by $(x,a) \mapsto \varphi^a(x)$ is a local homeomorphism because $\varphi^a$ is a local homeomorphism. By Lemma \ref{lemma:rhon-equivalent}, $\rho_\varphi$ is a local homeomorphism.
\end{proof}

\begin{proposition}
\label{prop:similarityGcc0}
$H_0(\rho_\varphi): H_0(\mathcal{G}(c)) \rightarrow H_0(c^{-1}(0))$ is an isomorphism and has inverse $H_0(\eta)$, where $\eta: c^{-1}(0) \rightarrow \mathcal{G}(c)$ is defined by $\eta(x,y) = (x,0,y,0)$.
\end{proposition}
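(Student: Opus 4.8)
The plan is to exhibit $\rho_\varphi$ and $\eta$ as mutually inverse (up to similarity) local homeomorphisms and then invoke Corollary \ref{corollary:isoHn}. First I would check that $\eta: c^{-1}(0) \to \mathcal{G}(c)$ is a homomorphism and a local homeomorphism: it is clearly a groupoid homomorphism since $(x,y)(y,z) = (x,z)$ maps to $(x,0,z,0) = (x,0,y,0)(y,0,z,0)$, and $\eta^{(0)}: X \to X \times \{0\} \subset \mathcal{G}(c)^{(0)}$, $x \mapsto (x,0)$, is a homeomorphism onto its image, so Lemma \ref{lemma:rhon-equivalent} gives that $\eta$ is a local homeomorphism. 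Then $\rho_\varphi$ and $\eta$ are local homeomorphisms and homomorphisms, so by Corollary \ref{corollary:isoHn} it suffices to show that $\eta \circ \rho_\varphi$ is similar to $\mathrm{id}_{\mathcal{G}(c)}$ and $\rho_\varphi \circ \eta$ is similar to $\mathrm{id}_{c^{-1}(0)}$.

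For the easy direction, compute $\rho_\varphi \circ \eta(x,y) = \rho_\varphi(x,0,y,0) = (\varphi^0(x), \varphi^0(y)) = (x,y)$, so $\rho_\varphi \circ \eta = \mathrm{id}_{c^{-1}(0)}$ exactly, which is trivially similar to the identity (take $\theta$ the inclusion of the unit space). The other direction is the substantive one. We have $\eta \circ \rho_\varphi(x,k,y,a) = (\varphi^a(x), \varphi^{a+k}(y), \text{viewed in } \mathcal{G}(c) \text{ as}) = (\varphi^a(x), 0, \varphi^{a+k}(y), 0)$, which is generally not equal to $(x,k,y,a)$. The plan is to produce a continuous map $\theta: \mathcal{G}(c)^{(0)} = X \times \mathbb{Z} \to \mathcal{G}(c)$ witnessing similarity, i.e. satisfying
\begin{align*}
\theta(r(g)) \cdot g = (\eta \circ \rho_\varphi)(g) \cdot \theta(s(g)) \qquad \text{for all } g \in \mathcal{G}(c).
\end{align*}
The natural candidate is $\theta(x,a) = (\varphi^a(x), -a, x, a)$ (or its inverse, depending on orientation conventions): this should be a valid element of $\mathcal{G}(c)$ because $\varphi$ is a section of $\sigma$, so $\sigma^a(\varphi^a(x)) = x$ when $a \geq 0$ (and a symmetric argument handles $a<0$), giving $c$-degree $-a$; the fourth coordinate $a$ is forced by the source/range bookkeeping in Definition \ref{def:Gc}. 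With $g = (x,k,y,a)$ one has $r(g) = (x,a)$ and $s(g) = (y, a+c(g)) = (y, a+k)$, and the similarity identity becomes a check that
\begin{align*}
(\varphi^a(x), -a, x, a)(x,k,y,a) \overset{?}{=} (\varphi^a(x), 0, \varphi^{a+k}(y), 0)(\varphi^{a+k}(y), a+k, y, a+k),
\end{align*}
which upon multiplying out (using \eqref{eqn:Gcprod}) should reduce to $(\varphi^a(x), k-a, y, a) = (\varphi^a(x), k-a, y, a)$ on both sides. Continuity of $\theta$ follows since $\varphi$ is continuous and the integer coordinate is locally constant on $X \times \{a\}$.

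The main obstacle I anticipate is getting the conventions exactly right: the definition of the skew product in Definition \ref{def:Gc} carries the extra $\mathbb{Z}$-coordinate, and the range/source maps shift it by $c(g)$, so one must be careful that $\theta(x,a)$ lands in $\mathcal{G}(c)$ with the correct fourth coordinate and that the products on both sides of \eqref{eqn:similar} are actually composable. There is also the sign subtlety in defining $\varphi^a$ for negative $a$ (one would set $\varphi^{-|a|} = \sigma^{|a|}$, matching the case analysis in the proof of Lemma \ref{lemma:rhophi}), and the similarity identity must be verified separately, or uniformly, across the sign cases of $a$ and $a+k$. Once $\theta$ is correctly pinned down the verification is a routine multiplication in $\mathcal{G}(c)$, and then Corollary \ref{corollary:isoHn} immediately yields that $H_0(\rho_\varphi)$ is an isomorphism with inverse $H_0(\eta)$ (and, as a bonus, that both preserve positive elements).
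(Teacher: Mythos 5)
Your strategy is exactly the paper's: show $\eta$ is a homomorphism and (via Lemma \ref{lemma:rhon-equivalent}, applied to $\eta^{(0)}(x)=(x,0)$) a local homeomorphism, observe $\rho_\varphi\circ\eta=\mathrm{id}_{c^{-1}(0)}$, produce a continuous $\theta:X\times\mathbb{Z}\to\mathcal{G}(c)$ witnessing similarity of $\eta\circ\rho_\varphi$ with $\mathrm{id}_{\mathcal{G}(c)}$, and invoke Corollary \ref{corollary:isoHn}. However, the one substantive step — pinning down $\theta$ and checking \eqref{eqn:similar} — is not correct as written. Your candidate $\theta(x,a)=(\varphi^a(x),-a,x,a)$ is in general not an element of $\mathcal{G}(c)$: since $\sigma^a(\varphi^a(x))=\sigma^0(x)$ for $a\ge 0$, the pair $(\varphi^a(x),x)$ carries degree $+a$, not $-a$ (the degree $-a$ belongs to the reversed pair $(x,\varphi^a(x))$). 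Moreover, even granting the element, its source would be $(x,0)$ while $r(g)=(x,a)$, so the product $\theta(r(g))\cdot g$ in your identity is not composable, and your claimed common value $(\varphi^a(x),k-a,y,a)$ of the two sides cannot be right.

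The bookkeeping in Definition \ref{def:Gc} forces, for your orientation $\theta(r(g))\,g=(\eta\circ\rho_\varphi)(g)\,\theta(s(g))$, the element $\theta(x,a)=(\varphi^a(x),a,x,0)$ (range $(\varphi^a(x),0)$, source $(x,a)$); this is precisely the inverse of the transversal $\theta(x,a)=(x,-a,\varphi^a(x),a)$ used in the paper, which verifies the identity in the other orientation $\theta(r(g))\,(\eta\circ\rho_\varphi)(g)=g\,\theta(s(g))$. With the corrected $\theta$ both sides of your identity multiply out to $(\varphi^a(x),a+k,y,0)$, and the rest of your argument (continuity of $\theta$, the sign convention $\varphi^{-|a|}=\sigma^{|a|}$, and the appeal to Corollary \ref{corollary:isoHn}) goes through as in the paper. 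So the plan is sound and matches the paper, but the central similarity computation as you stated it would fail and needs this repair.
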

\begin{proof}
    It follows from the definition that $\eta$ is a well-defined homomorphism. Note that $\eta^{(0)}$ is defined by $\eta^{(0)}(x) = (x,0)$. Thus $\eta^{(0)}$ is a local homeomorphism. By Lemma \ref{lemma:rhon-equivalent}, $\eta$ is also a local homeomorphism. It is straightforward that $\rho_\varphi \circ \eta = \mathrm{id}_{c^{-1}(0)}$.

    Now we show that $\eta \circ \rho_\varphi$ is homologically similar to $\mathrm{id}_{\mathcal{G}(c)}$. Let $\theta: X \times \mathbb{Z} \rightarrow \mathcal{G}(c)$ be defined by $\theta(x,a) =  (x,-a, \varphi^a(x), a)$. Note that $\theta$ is well-defined. In fact, if $a \geq 0$, we have that $\sigma^a(\varphi^a(x)) = x$. Then $(x,-a,\varphi^a(x),a) \in \mathcal{G}(c)$. If $a < 0$, then $\varphi^a(x) = \sigma^{-a}(x)$, which implies that $(x,-a, \varphi^a(x),a) \in \mathcal{G}(c)$. This function is also continuous because it is continuous on each coordinate.

    Let $g = (x,k,y,a) \in \mathcal{G}(c)$. Then
    \begin{align*}
        \theta(r(g)) \eta \circ \rho_\varphi(g)
        &= \theta(x,a) \eta \circ \rho_\varphi(x,k,y,a) \\
        &= (x,-a, \varphi^a(x), a) \eta \circ \rho_\varphi(x,k,y,a) \\
        &= (x, - a, \varphi^a(x), a) \eta(\varphi^a(x), \varphi^{a+k}(y)) \\
        &= (x, - a, \varphi^a(x), a) (\varphi^a(x), 0, \varphi^{a+k}(y),0) \\
        &= (x, - a, \varphi^{a+k}(y), a),
        \text{\hspace{7pt} and}\\
        \mathrm{id}_{\mathcal{G}(c)}(g) \theta(s(g))
        &= (x,k,y,a)\theta(y,a+k) \\
        &= (x,k,y,a)(y,-a -k, \varphi^{a+k}(y), a+k)\\
        &= (x, -a, \varphi^{a+k}(y),a).
    \end{align*}
    So $\eta \circ \rho_\varphi$ and $\mathrm{id}_{\mathcal{G}(c)}$ are homologically similar. Therefore, $\mathcal{G}(c)$ and $c^{-1}(0)$ are homologically similar. Moreover, by Corollary \ref{corollary:isoHn}, we have an isomorphism $H_0(\rho_\eta): H_0(\mathcal{G}(c)) \rightarrow H_0(c^{-1}(0))$  with inverse $H_0(\eta)$.
\end{proof}

\begin{proposition}
The second part of the diagram \eqref{eqn:diagram} is commutative.
\end{proposition}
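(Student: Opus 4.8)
The plan is to check commutativity on a generating set. Since $\mathcal{G}(c)^{(0)} = X \times \mathbb{Z}$ is locally compact, Hausdorff, second countable and totally disconnected, the group $H_0(\mathcal{G}(c))$ is generated by the classes $[1_{V \times \{a\}}]_{\mathcal{G}(c)}$ with $V \subseteq X$ compact open and $a \in \mathbb{Z}$, exactly as already used in the proof of Part 1. Recalling that $H_0(\rho_\varphi)([f]) = [(\rho_\varphi^{(0)})_\ast f]$ (Remark \ref{remark:inducedhomomorphismH0}), that $\rho_\varphi^{(0)}\colon X \times \mathbb{Z} \to X$ is the local homeomorphism $(x,b) \mapsto \varphi^b(x)$ from Lemma \ref{lemma:rhophi}, and that $\widetilde{\beta}(f)(x,b) = f(x,b+1)$ gives $\widetilde{\beta}(1_{V\times\{a\}}) = 1_{V\times\{a-1\}}$, it suffices to establish, for all such $V$ and $a$, the equality of functions
\[
\sigma_\ast\bigl((\rho_\varphi^{(0)})_\ast 1_{V\times\{a\}}\bigr) = (\rho_\varphi^{(0)})_\ast 1_{V\times\{a-1\}}
\]
in $C_c(X,\mathbb{Z})$, since passing to classes then yields $[\sigma_\ast]\circ H_0(\rho_\varphi) = H_0(\rho_\varphi)\circ[\widetilde{\beta}]_{\mathcal{G}(c)}$ on generators, hence everywhere.

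To prove this equality I would unwind both sides pointwise. For $z \in X$ one has $(\rho_\varphi^{(0)})_\ast 1_{V\times\{b\}}(z) = \#\{x \in V : \varphi^b(x) = z\}$, so
\[
\sigma_\ast\bigl((\rho_\varphi^{(0)})_\ast 1_{V\times\{a\}}\bigr)(z) = \sum_{w:\ \sigma(w) = z} \#\{x \in V : \varphi^a(x) = w\} = \#\{x \in V : \sigma(\varphi^a(x)) = z\},
\]
while $(\rho_\varphi^{(0)})_\ast 1_{V\times\{a-1\}}(z) = \#\{x \in V : \varphi^{a-1}(x) = z\}$. Thus the claimed equality reduces to the pointwise identity $\sigma \circ \varphi^a = \varphi^{a-1}$ for all $a \in \mathbb{Z}$, where $\varphi^a := \sigma^{-a}$ for $a < 0$ following the convention of Lemma \ref{lemma:rhophi}. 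I would verify this in three cases: for $a \geq 1$ it is $\sigma \circ \varphi \circ \varphi^{a-1} = \varphi^{a-1}$ using $\sigma\circ\varphi = \mathrm{id}$; for $a = 0$ it is exactly the convention $\varphi^{-1} = \sigma$; and for $a \leq -1$ it is $\sigma \circ \sigma^{-a} = \sigma^{-(a-1)}$. This completes the verification on generators and hence the proof.

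No serious obstacle arises here; the only point requiring care is keeping track of the convention $\varphi^a = \sigma^{-a}$ for negative $a$, which is cleanly absorbed into the identity $\sigma\circ\varphi^a = \varphi^{a-1}$, together with the routine observation that $1_{V\times\{a\}}$ is a compactly supported, clopen indicator so that all the pushforwards above are defined. As a byproduct, since $H_0(\rho_\varphi)\circ[\widetilde{\beta}]_{\mathcal{G}(c)}\circ H_0(\eta)$ is a genuine group homomorphism on $H_0(c^{-1}(0))$ that agrees with $[f] \mapsto [\sigma_\ast f]$ on the generating classes $[1_V]$, the same computation also re-confirms that $[\sigma_\ast]$ is well defined on $H_0(c^{-1}(0))$.
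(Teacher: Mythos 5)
Your proof is correct and takes essentially the same route as the paper: both reduce commutativity of the square to a direct pushforward computation resting on $\rho_\varphi^{(0)}(x,a)=\varphi^a(x)$, the section identity $\sigma\circ\varphi=\mathrm{id}$, and the convention $\varphi^a=\sigma^{-a}$ for $a<0$ (your identity $\sigma\circ\varphi^a=\varphi^{a-1}$). The only organizational difference is that you check the square on the generators $[1_{V\times\{a\}}]_{\mathcal{G}(c)}$, whereas the paper conjugates by $H_0(\eta)$ and verifies $(\rho_\varphi^{(0)})_\ast\circ\widetilde{\beta}\circ\eta_\ast^{(0)}=\sigma_\ast$ on arbitrary $f\in C_c(X,\mathbb{Z})$; the underlying computation is the same.
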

\begin{proof}
    By Proposition \ref{prop:similarityGcc0}, $H_0(\rho_\varphi): H_0(\mathcal{G}(c)) \rightarrow H_0(c^{-1}(0))$ is an isomorphism with inverse $H_0(\eta)$. In order to show that the diagram above commutes, we prove that $(\rho_\varphi^{(0)})_\ast \circ \widetilde{\beta} \circ \eta^{(0)}_\ast = \sigma_\ast$. Let $f \in C_c(X, \mathbb{Z})$ and $x \in X$. Note that the restriction $\rho_\varphi^{(0)}$ satisfies $\rho_\varphi^{(0)}(y,a) = \varphi^a(y)$. Then
    \begin{align*}
        (\rho_\varphi)_\ast^{(0)} \circ \widetilde{\beta} \circ \eta_\ast^{(0)}(f)(x)
        &= \sum_{(y,a): \rho_\varphi(y,a) = x} \widetilde{\beta} \circ \eta_\ast^{(0)}(f)(y,a)\\
        &= \sum_{a \in \mathbb{Z}} \sum_{y: \varphi^a(y) = x} \widetilde{\beta} \circ \eta_\ast^{(0)}(f)(y,a) \\
        &= \sum_{a \in \mathbb{Z}} \sum_{y: \varphi^a(y) = x} \eta_\ast^{(0)}(f)(y,a + 1).
    \end{align*}
    
    Note that $\eta_\ast^{(0)}(f)(y,a+1) = 1$ if $a = -1$, and zero otherwise. Then
    \begin{align*}
        (\rho_\varphi)_\ast^{(0)} \circ \widetilde{\beta} \circ \eta_\ast^{(0)}(f)(x)
        &= \sum_{y: \varphi^{-1}(y) = x} f(y)
        = \sum_{y: \sigma(y) = x} f(y)
        = \sigma_\ast(f)(x).
    \end{align*}
    This implies that $[\sigma_\ast] = H_0(\rho) [\widetilde{\beta}] H_0(\eta)$. Therefore, the diagram commutes.
\end{proof}

\subsection{AF embeddability for Deaconu-Renault groupoids}
\label{section:main}

Here we show Theorem \ref{thm:main}, which gives a condition on the map $\sigma:X \rightarrow X$, characterising when the corresponding groupoid C*-algebra $C^*(\mathcal{G})$ is AF embeddable. We prove this result by applying the commutative diagram of Section \ref{section:diagram} in combination with Theorem \ref{thm:brown}.

We begin this section by defining $\sigma_\ast(F)$ for functions $F \in C_c(c^{-1}(0), \mathbb{Z})$, using the fact that $c^{-1}(0)$ is the increasing union of the sets $R(\sigma^n)$.

\begin{lemma}
\label{lemma:sigmaastRsigma}
Fix $n \in \mathbb{N}$ and let $F \in C_c(R(\sigma^n), \mathbb{Z})$. We define $\sigma_\ast(F) \in C_c(R(\sigma^n), \mathbb{Z})$ by
\begin{align}
\label{eqn:sigmaastRsigma}
 \sigma_\ast(F)(x,y) = \sum_{\substack{u: \hphantom{f} \sigma(u) = x \\ v: \hphantom{f} \sigma(v) = y}} F(u,v).
\end{align}
Then $\sigma_*(F)$ is well-defined. In particular, if $n \geq 1$, we have $\sigma_\ast(F) \in C_c(R(\sigma^{n-1}), \mathbb{Z})$.
\end{lemma}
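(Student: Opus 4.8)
The plan is to realise $\sigma_\ast(F)$ as the pushforward of $F$ along a local homeomorphism between elementary groupoids, and then quote the basic fact (used throughout Section~\ref{section:homology}, e.g.\ for the boundary maps $d_i^{(n)}$) that if $\phi\colon A \to B$ is a local homeomorphism of locally compact Hausdorff spaces, then $f \mapsto \phi_\ast f$, with $\phi_\ast f(b) = \sum_{a \in \phi^{-1}(b)} f(a)$, is a well-defined map $C_c(A,\mathbb{Z}) \to C_c(B,\mathbb{Z})$; in particular each such sum is finite, since $f$ has compact support and $\phi$ has discrete, closed fibres.

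Assume first $n \geq 1$. Define $q_n\colon R(\sigma^n) \to R(\sigma^{n-1})$ by $q_n(u,v) = (\sigma(u),\sigma(v))$. This is well defined, because $\sigma^n(u) = \sigma^n(v)$ gives $\sigma^{n-1}(\sigma(u)) = \sigma^n(u) = \sigma^n(v) = \sigma^{n-1}(\sigma(v))$; it is continuous, being the restriction of $\sigma \times \sigma$ and $R(\sigma^{n-1})$ carrying the subspace topology from $X \times X$; and it is a groupoid homomorphism, since $q_n\big((u,v)(v,w)\big) = (\sigma(u),\sigma(w)) = q_n(u,v)\,q_n(v,w)$. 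Under the identifications of the unit spaces of $R(\sigma^n)$ and $R(\sigma^{n-1})$ with $X$, the restriction $q_n^{(0)}$ is the map $x \mapsto \sigma(x)$, which is a local homeomorphism; so Lemma~\ref{lemma:rhon-equivalent} shows $q_n$ is a local homeomorphism. By the quoted fact, $(q_n)_\ast F \in C_c(R(\sigma^{n-1}),\mathbb{Z})$, where $(q_n)_\ast F(x,y)$ is the sum of $F(u,v)$ over all $(u,v) \in R(\sigma^n)$ with $\sigma(u) = x$ and $\sigma(v) = y$.

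It remains to match this with \eqref{eqn:sigmaastRsigma} and to upgrade the conclusion. If $(x,y) \in R(\sigma^{n-1})$ then every pair $(u,v)$ with $\sigma(u)=x$, $\sigma(v)=y$ already lies in $R(\sigma^n)$, since $\sigma^n(u) = \sigma^{n-1}(x) = \sigma^{n-1}(y) = \sigma^n(v)$; hence on $R(\sigma^{n-1})$ the sum in \eqref{eqn:sigmaastRsigma} agrees with $(q_n)_\ast F$. If instead $(x,y) \in R(\sigma^n) \setminus R(\sigma^{n-1})$, then for every $(u,v)$ with $\sigma(u)=x$, $\sigma(v)=y$ we get $\sigma^n(u) = \sigma^{n-1}(x) \neq \sigma^{n-1}(y) = \sigma^n(v)$, so $(u,v) \notin R(\sigma^n)$ and $F(u,v) = 0$; thus the sum in \eqref{eqn:sigmaastRsigma} vanishes there. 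Therefore $\sigma_\ast(F)$ is the extension by zero of $(q_n)_\ast F$ from $R(\sigma^{n-1})$ to $R(\sigma^n)$. Since the groupoids $R(\sigma^k)$ form an increasing family of open subgroupoids with union $c^{-1}(0)$, the set $R(\sigma^{n-1})$ is open in $R(\sigma^n)$, so this extension belongs to $C_c(R(\sigma^n),\mathbb{Z})$; and as it is supported in $R(\sigma^{n-1})$, it belongs to $C_c(R(\sigma^{n-1}),\mathbb{Z})$, which proves both claims. For $n = 0$, identifying $R(\sigma^0)$ with $X$ via the diagonal and noting that then only terms with $u = v$ contribute, the formula \eqref{eqn:sigmaastRsigma} reduces to the pushforward of $x \mapsto F(x,x)$ along the surjective local homeomorphism $\sigma\colon X \to X$, which again lies in $C_c$.

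The only step needing any care is showing that $q_n$ is a local homeomorphism, and there Lemma~\ref{lemma:rhon-equivalent} does the work, reducing it to the obvious statement about $\sigma$. Everything else is bookkeeping, the key observation being that in \eqref{eqn:sigmaastRsigma} one reads $F(u,v)$ as $0$ whenever $(u,v) \notin R(\sigma^n)$, so that the ambient double sum coincides with the intrinsic groupoid pushforward $(q_n)_\ast F$.
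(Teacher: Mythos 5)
Your proof is correct, but it takes a genuinely different route from the paper. The paper works entirely with the basic compact open sets $\mathcal{U}_{A,B}^{n,n}$ (with $\sigma^n$ injective on $A$ and $B$ and $\sigma^n(A)=\sigma^n(B)$): it computes $\sigma_\ast(1_{\mathcal{U}_{A,B}^{n,n}}) = 1_{\mathcal{U}_{\sigma(A),\sigma(B)}^{n-1,n-1}}$, notes that these sets generate the topology of $R(\sigma^n)$ so that their indicator functions span $C_c(R(\sigma^n),\mathbb{Z})$, and concludes by linearity. You instead package the map as a pushforward: you introduce the groupoid homomorphism $q_n(u,v)=(\sigma(u),\sigma(v))$ from $R(\sigma^n)$ to $R(\sigma^{n-1})$, use Lemma \ref{lemma:rhon-equivalent} (via $q_n^{(0)}=\sigma$) to see that it is a local homeomorphism, quote the standard fact that pushforward along a local homeomorphism maps $C_c(\cdot,\mathbb{Z})$ to $C_c(\cdot,\mathbb{Z})$, and then identify $\sigma_\ast(F)$ with the extension by zero of $(q_n)_\ast F$ along the open inclusion $R(\sigma^{n-1})\subset R(\sigma^n)$. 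Your version buys a few things: finiteness of the sums and compactness of supports come for free from the general pushforward fact rather than from the (unargued) spanning claim in the paper, which really needs a disjointification of basic sets to write an arbitrary $F$ as a $\mathbb{Z}$-linear combination of indicators; the ``in particular'' statement that $\sigma_\ast(F)\in C_c(R(\sigma^{n-1}),\mathbb{Z})$ is transparent by construction; and you explicitly treat the points of $R(\sigma^n)\setminus R(\sigma^{n-1})$ (where the convention $F(u,v)=0$ off $R(\sigma^n)$ is needed for \eqref{eqn:sigmaastRsigma} to make sense) as well as the degenerate case $n=0$, both of which the paper glosses over. The paper's argument, in turn, is shorter and yields the explicit formula on the generating indicator functions, which is what it actually manipulates elsewhere. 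Note that your appeal to openness of $R(\sigma^{n-1})$ in $R(\sigma^n)$ rests on the paper's assertion that $c^{-1}(0)$ is the increasing union of the open subgroupoids $R(\sigma^k)$, which is stated without proof there; citing it is legitimate, but it is the one external input your argument shares with the paper's setup.
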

\begin{proof}
    Define the sets
    \begin{align*}
        \mathcal{U}_{A,B}^n
        = \lbrace (x,y) : x \in A, y \in B, \sigma^n(A) = \sigma^n(B) \rbrace,
    \end{align*}
    where $A, B$ are compact open wiht $\sigma^n(A) = \sigma^n(B)$, and $\sigma^n$ is injective on $A$ and $B$. Then one can show that $\sigma(1_{\mathcal{U}_{A, B}^{n, n}}) = 1_{\mathcal{U}_{\sigma(A), \sigma(B)}^{n-1, n-1}}$. The sets $\mathcal{U}_{A,B}^n$ generate the topology of $R(\sigma^n)$, then the functions $1_{\mathcal{U}_{A, B}^{n, n}}$. By linearity, $\sigma_\ast(F)$ is well-defined.
\end{proof}
\begin{customthm}{1}
    Let $\sigma$ be a surjective local homeomorphism on a locally compact, Hausdorff, second countable, totally disconnected space $X$. Denote by $\mathcal{G}$ the Deaconu-Renault groupoid corresponding to $\sigma$. Then the following are equivalent:
    \begin{enumerate}[(i)]
    \item $C^*(\mathcal{G})$ is AF embeddable,
    \item $C^*(\mathcal{G})$ is quasidiagonal,
    \item $C^*(\mathcal{G})$ is stably finite,
    \item $\mathrm{Im}(\sigma_\ast - \mathrm{id}) \cap C_c(X,\mathbb{N}) = \lbrace 0 \rbrace,$
    \end{enumerate}
    where the map $\sigma_\ast: C_c(X, \mathbb{Z}) \rightarrow C_c(X,\mathbb{Z})$ is defined by
    \begin{align*}
        \sigma_\ast(f)(x) = \sum_{y:\sigma(y) = x} f(y)
        \hspace{10pt}
        \text{for }x \in X, f \in C_c(X, \mathbb{Z}).
    \end{align*}
\end{customthm}
\begin{proof}
    Let $c: \mathcal{G} \rightarrow \mathbb{Z}$ be the continuous cocycle given by $c(x,k,y) = k$ for $(x,k,y) \in \mathcal{G}$. By \cite[Corollary 5.2]{FKPS}, we have that $\mathcal{G}(c)$ is an AF groupoid, and therefore the C*-algebra $C^*(G(c))$ is AF. For simplicity of notation, we denote this C*-algebra by $B$.

    Let $\beta$ be the action on $B$ corresponding to the dual action $\hat{\alpha}^c$ and induced by the isomorphism $B \cong C^*(G) \rtimes_{\alpha^c} \mathbb{T}$ from Proposition \ref{prop:beta}. Then Brown's theorem (Theorem \ref{thm:brown}) implies the equivalent conditions bellow.

    \begin{enumerate}[(i')]
    \item $B \rtimes_\beta \mathbb{Z}$ is AF embeddable,
    \item $B \rtimes_\beta \mathbb{Z}$ is quasidiagonal,
    \item $B \rtimes_\beta \mathbb{Z}$ is stably finite,
    \item $H_\beta \cap K_0(B)^+ = \lbrace 0 \rbrace$.
    \end{enumerate}

    By Proposition \ref{prop:beta}, $B \rtimes_\beta \mathbb{Z}$ and $C^*(\mathcal{G}) \rtimes_{\alpha^c} \mathbb{T} \rtimes_{\widehat{\alpha}^c} \mathbb{Z}$ are isomorphic. It follows from Takai duality that $C^*(\mathcal{G}) \rtimes_{\alpha^c} \mathbb{T} \rtimes_{\widehat{\alpha}^c} \mathbb{Z}$ and $C^*(\mathcal{G})$ are stably isomorphic. So, $B \rtimes_\beta \mathbb{Z}$ and $C^*(\mathcal{G})$ are stably isomorphic. Since AF embeddability, quasidiagonality and stable finiteness are preserved under stable isomorphisms, then we have the equivalence below:

    \begin{enumerate}[(i)]
    \item $C^*(\mathcal{G})$ is AF embeddable,
    \item $C^*(\mathcal{G})$ is quasidiagonal,
    \item $C^*(\mathcal{G})$ is stably finite,
    \item[(iv')] $H_\beta \cap K_0(B)^+ = \lbrace 0 \rbrace$.
    \end{enumerate}

    Now we study condition (iv'). By the commutative diagram \eqref{eqn:diagram}, (iv') is false if, and only if, there are non-zero $f \in C_c(X, \mathbb{Z})$ and $h \in C_c(X, \mathbb{N})$ such that
    \begin{align*}
    [\sigma_\ast(f) - f] = [h].
    \end{align*}

    Suppose the equation above holds. Then there exists $F \in C_c(c^{-1}(0), \mathbb{Z})$ such that $\sigma_\ast(f) - f = h + \partial_1 F$. Then there exists $n \geq 1$ such that $F \in C_c(R(\sigma^n), \mathbb{Z})$. Define $\partial_1^{(n)}: C_C(R(\sigma^n), \mathbb{Z}) \rightarrow C_C(R(\sigma^n), \mathbb{Z})$ by $\partial_1^{(n)} = (s\vert_{R(\sigma^n)})_\ast - (r\vert_{R(\sigma^n)})_\ast$. It is straightforward that $\partial_1 F = \partial_1^{(n)} F$. Hence,
    \begin{align}
    \label{eqn:sigmafhpartialnF}
        \sigma_\ast(f) - f = h + \partial_1^{(n)} F.
    \end{align}
    Moreover,
    \begin{align*}
        \sigma_\ast(\partial^{(n)}_1(F))(x)
        &= \sum_{\substack{y: \sigma(y) = x \\ z: \sigma^n(z) = \sigma^n(y)}} [F(z,y) - F(y,z)] \\
        &= \sum_{\substack{v: \sigma^{n-1}(v) = \sigma^{n-1}(x) \\ y: \sigma(y) = x \\ z:\sigma(z) = v}} [F(z,y) - F(y,z)] \\
        &= \sum_{v: \sigma^{n-1}(v) = \sigma^{n-1}(x)} [\sigma_\ast(F)(v,x) - \sigma_\ast(F)(x,v)] \\
        &= \partial_1^{(n-1)}(\sigma_\ast(F))(x)
    \end{align*}

    It follows from Lemma \ref{lemma:sigmaastRsigma} that $\sigma_\ast(F) \in C_c(R(\sigma^n))$. By applying $\sigma^n_\ast$ on both sides of the equation \eqref{eqn:sigmafhpartialnF} and setting $\widetilde{f} = \sigma_\ast^n(f)$ and $\widehat{h} = \sigma_\ast^n(h)$, we obtain
    \begin{align*}
    \sigma_\ast(\widetilde{f}) - \widetilde{f}
    = \widetilde{h} + \sigma_\ast^n(\partial_1^{(n)} F)
    = \widetilde{h}.
    \end{align*}

    We have just proved that $\mathrm{Im}(\sigma_\ast - \mathrm{id}) \cap C_c(X, \mathbb{N}) \neq \lbrace 0 \rbrace$. Now we prove the converse. Suppose there are $f \in C_c(X, \mathbb{N})$ and $h \in C_c(X, \mathbb{N})$ such that $h \neq 0$ and $\sigma_\ast(f) - f = h$. It follows from Lemma \ref{lemma:traceH0Rsigma} that $[h] \neq 0$. Then equation (iv') holds, which implies that $C^*(\mathcal{G})$  is not AF embeddable.
\end{proof}

We will use the following corollary to show that our main theorem generalises a known theorem for graph algebras.

\begin{corollary}
\label{corollary:ofthm1}
    The following are equivalent:
    \begin{enumerate}
        \item $C^*(\mathcal{G})$ is AF embeddable,
        \item $\mathrm{Im}(\sigma_\ast - \mathrm{id}) \cap C_c(X, \mathbb{N}) = \lbrace 0 \rbrace,$
        \item $\mathrm{Im}(\sigma_\ast^n - \mathrm{id}) \cap C_c(X, \mathbb{N}) = \lbrace 0 \rbrace$ for some $n \geq 1$,
        \item $\mathrm{Im}(\sigma_\ast - \mathrm{id}) \cap C_c(X, \mathbb{N}) = \lbrace 0 \rbrace$ for all $n \geq 1$.
    \end{enumerate}
\end{corollary}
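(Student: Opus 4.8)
The plan is to reduce everything to Theorem~\ref{thm:main} together with a short piece of commutative algebra on the operator $T:=\sigma_\ast$ acting on $C_c(X,\mathbb{Z})$. The equivalence of $(1)$ and $(2)$ is nothing but the equivalence of items (i) and (iv) of Theorem~\ref{thm:main}, so no new work is needed there. The remaining three conditions are the ``$n=1$'', ``some $n$'', and ``all $n$'' versions of the single statement $\mathrm{Im}(\sigma_\ast^{\,n}-\mathrm{id})\cap C_c(X,\mathbb{N})=\{0\}$, and for these I would argue entirely with the factorisation
\[
T^n-\mathrm{id}=(T-\mathrm{id})\bigl(\mathrm{id}+T+\cdots+T^{n-1}\bigr)=\bigl(\mathrm{id}+T+\cdots+T^{n-1}\bigr)(T-\mathrm{id}),
\]
together with the elementary observation that $T=\sigma_\ast$ maps $C_c(X,\mathbb{N})$ into itself, since a finite sum of non-negative integers is non-negative.

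From the factorisation one gets $\mathrm{Im}(T^n-\mathrm{id})\subseteq\mathrm{Im}(T-\mathrm{id})$ for every $n\ge1$, so $(2)\Rightarrow(4)$ is immediate and $(4)\Rightarrow(3)$ is trivial (take any $n$). The real content is $(3)\Rightarrow(2)$, which I would prove by contraposition. Assume $(2)$ fails, so there are $f\in C_c(X,\mathbb{Z})$ and a non-zero $h\in C_c(X,\mathbb{N})$ with $Tf-f=h$. Fix an arbitrary $n\ge1$ and set $\widetilde f=(\mathrm{id}+T+\cdots+T^{n-1})f$ and $\widetilde h=(\mathrm{id}+T+\cdots+T^{n-1})h$. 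Then
\[
T^n\widetilde f-\widetilde f=(T^n-\mathrm{id})f=\bigl(\mathrm{id}+T+\cdots+T^{n-1}\bigr)(Tf-f)=\widetilde h.
\]
Since $T$ preserves $C_c(X,\mathbb{N})$, each $T^kh$ lies in $C_c(X,\mathbb{N})$, whence $\widetilde h\in C_c(X,\mathbb{N})$ and moreover $\widetilde h(x)\ge h(x)$ for all $x\in X$; as $h\neq0$ this forces $\widetilde h\neq0$. Thus $\mathrm{Im}(\sigma_\ast^{\,n}-\mathrm{id})\cap C_c(X,\mathbb{N})\neq\{0\}$ for every $n\ge1$, so $(3)$ fails. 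Combining, $(1)\Leftrightarrow(2)\Rightarrow(4)\Rightarrow(3)\Rightarrow(2)$, so all four statements are equivalent.

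I do not expect a serious obstacle: the only point needing a moment's care is that $\widetilde h$ is genuinely non-zero, which is handled by the monotonicity $\widetilde h\ge h\ge 0$ coming from positivity of $T$ on $C_c(X,\mathbb{N})$. As an alternative route one could try to deduce $(3)\Leftrightarrow(1)$ by applying Theorem~\ref{thm:main} to the surjective local homeomorphism $\sigma^n$ and using $(\sigma^n)_\ast=\sigma_\ast^{\,n}$; but that forces a comparison of $C^*(\mathcal{G}_{\sigma^n})$ with $C^*(\mathcal{G}_\sigma)$, so the direct algebraic argument above is cleaner and is the one I would carry out.
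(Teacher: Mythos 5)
Your argument is correct and is essentially the paper's: the geometric factorisation of $\sigma_\ast^{\,n}-\mathrm{id}$ combined with positivity of $\sigma_\ast$ on $C_c(X,\mathbb{N})$ is exactly the telescoping-sum argument the paper uses for the contrapositive, and the equivalence $(1)\Leftrightarrow(2)$ is quoted from Theorem~\ref{thm:main} in both. One cosmetic slip: the first equality $T^n\widetilde f-\widetilde f=(T^n-\mathrm{id})f$ in your display is not right as written (and $\widetilde f$ is never actually needed), but the correct remainder of the chain, $(T^n-\mathrm{id})f=\bigl(\mathrm{id}+T+\cdots+T^{n-1}\bigr)(Tf-f)=\widetilde h\geq h\neq 0$, already exhibits the required nonzero element of $\mathrm{Im}(\sigma_\ast^{\,n}-\mathrm{id})\cap C_c(X,\mathbb{N})$.
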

\begin{proof}
    The equivalence $(1) \Leftrightarrow (2)$ follows from Theorem \ref{thm:main}, and the implications $(4) \Rightarrow (2) \Rightarrow (3)$ are straightforward. We prove $(3) \Rightarrow (4)$ by contrapositive. Suppose $(4)$ is false, and fix $n$. Then, for some $m \geq 1$, there are nonzero $f \in C_c(X, \mathbb{Z}$ and $h \in C_c(X, \mathbb{N})$ such that $\sigma_\ast(f) - f = h$. Then, for all $k$, we have
    \begin{align*}
        \sigma^k_\ast(\sigma_\ast^m(f)) - \sigma_\ast^k(f) = \sigma_\ast^k(h).
    \end{align*}
    Let $F = \sigma_\ast^m(f)$ and $H = h + \sigma_\ast(h) + \dots + \sigma_\ast^{n-1}(h)$. Note that $H$ is non-negative and nonzero. By a telescoping sum, we have
    \begin{align*}
    \sigma^n_\ast(F) - F
    = \sum_{k=0}^{n-1} [\sigma_\ast^{k+1}(\sigma_\ast^m(f)) - \sigma_\ast^{k}(\sigma_\ast^m(f))]
    = \sum_{k=0}^{n-1} \sigma_\ast^k(h)
    = H.
    \end{align*}
    Thus $\mathrm{Im}(\sigma_\ast^n - \mathrm{id}) \cap C_c(X, \mathbb{N}) \neq \lbrace 0 \rbrace$. Since $n$ is arbitrary, then $(3)$ is false. Therefore, $(3) \Rightarrow (4)$.
\end{proof}

\section{Examples}
\label{section:examples}

In this section we show that Theorem \ref{thm:main} generalises three known results in the literature that characterise the AF embeddability of graph algebras, topological graph algebras and the crossed product $C(X) \rtimes_\sigma \mathbb{Z}$.

\subsection{Graph algebras}

Here we prove Schafhauser's result on graph algebras \cite{Schafhauser-AFEgraph} as a corollary of Theorem \ref{thm:main}.

A \emph{(directed) graph} is a tuple $E = (E^0, E^1, r, s)$ where $E^0$ and $E^1$ are countable sets equipped with the \emph{range} and \emph{source} maps $r, s: E^1 \rightarrow E^0$. The elements of $E^0$ are called \emph{vertices}, while $E^1$ contains the edges of the graph. We say that $E$ is \emph{row-finite} if $r^{-1}(v)$ is finite for every vertex $v$. Given a vertex $w$, we say that $w$ is a \emph{source} if $r^{-1}(w) = \emptyset$, and call $w$ a \emph{sink} if $s^{-1}(w) = \emptyset$. A finite or infinite sequence of edges $\mu_1 \mu_2 \dots$ is a \emph{path} if, for every two consecutive edges, we have $s(\mu_i) = r(\mu_{i+1})$. $E^\ast$, $E^\infty$  denote the sets of finite and infinite paths, respectively, while $E^n$ is the set of paths of \emph{length $n$}. 
The range of a path is the range of its first edge, while the source of a finite path is the source of its last element. We denote the length of a path $\mu$ by $\vert \mu \vert$. A vertex $v$ is considered path of length zero and $r(v) = s(v) = v$. Given two paths $\mu, \nu$ such that $s(\mu) = r(\nu)$, $\mu \nu$ is the path obtained by concatenating the two sequences. A finite path $\mu$ is a \emph{cycle} if $r(\mu) = s(\mu)$. This cycle has an \emph{entrance} if there exists an edge $e$ with  $r(e) = r(\mu_i)$ and $e \neq \mu_i$ for some $i$. For an introduction to graph algebras, see the book \cite{Raeburn} by Raeburn.

Given a row-finite graph with no sources, \cite[Theorem 4.2]{KPRR} gives an isomorphism from the graph algebra $C^*(E)$ to the C*-algebra $C^*(G_E)$ defined by $1_e \mapsto 1_{Z(e, r(e))}$, where
\begin{align*}
    G_E = \lbrace (x_0z, \vert x_0 \vert - \vert y_0 \vert, y_0 z) : x_0, y_0 \in E^\ast, z \in E^\infty \rbrace,
\end{align*}
and $Z(e, s(e)) = \lbrace (ex, 1, x) : x \in E^\infty, r(x) = s(e) \rbrace$. The set $G_E$ is an example of a Deaconu-Renault groupoid, where the map $\sigma: E^\infty \rightarrow E^\infty$ is given by 
\begin{equation*}
    \sigma(x_1 x_2 x_3 \dots) = x_2 x_3 \dots
\end{equation*}
and the topology on $E^\infty$ is generated by the cylinders $Z(\mu)$ for $\mu \in E^*$, where each $Z(\mu)$ is the set of all infinite sequences starting with $\mu$.

The groupoid is almost identical to the groupoid in \cite[Definition 2.3]{KPRR}, but we made some changes to make it compatible with the notation in Raeburn's book \cite{Raeburn}. In constrast to our definition, in \cite{KPRR} two consecutive edges $x_i, x_{i+1}$ satisfy $r(x_i) = s(x_{i+1})$. Also, their groupoid has elements of the form $(x, -k, y)$ instead of $(x, k, y)$.

We want to apply the isomorphism $C^*(E) \cong C^*(G_E)$, so we fix $E$ to be a row-finite graph with no sources. Note also that the map $\sigma$ is surjective if, and only if, $E$ has no sinks. In order to apply Theorem 1, we need $\sigma$ to be surjective, so we also assume throughout this subsection that $E$ has no sinks.

Schafhauser proved in \cite{Schafhauser-AFEgraph} that the graph algebra $C^*(E)$ is AF embeddable if, and only if, no cycle in $E$ has an entrance. In the next lemmas we study properties of $E^\infty$ that will help us to understand when Schafhauser's condition is false.

\begin{lemma}
\label{graph:lemma:np}
Suppose that no cycle in $E$ has an entrance. Given $x \in E^\infty$, then either
\begin{enumerate}
    \item all edges of $x$ are distinct, or
    \item there are unique $n \geq 0, p \geq 1$ such that $\sigma^{n+p}(x) = \sigma^n(x)$ and the elements $x_1, \dots,  x_{n+p}$ are distinct.
\end{enumerate}
\end{lemma}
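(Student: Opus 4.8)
The plan is to analyze the edge sequence of an infinite path $x = x_1 x_2 x_3 \cdots$ directly, using the hypothesis that no cycle in $E$ has an entrance. First I would observe that the hypothesis gives a strong rigidity: if $v$ is a vertex lying on a cycle, then $r^{-1}(v)$ is a singleton, because any edge $e$ with $r(e) = v$ that is not the designated cycle-edge at $v$ would be an entrance. More generally, once an infinite path enters a vertex that lies on a cycle, the path is forced to traverse that cycle forever, since at each vertex of the cycle there is a unique incoming edge. This is the key structural fact, and establishing it carefully (including that "lying on a cycle'' is well-behaved) is what I expect to be the main obstacle — one has to be careful about orientation conventions (in this paper $s(\mu_i) = r(\mu_{i+1})$ for consecutive edges of a path, so "following'' a path means moving in the direction of decreasing range), and about the possibility that a vertex lies on two different cycles.

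Next I would do the case split. Suppose not all edges of $x$ are distinct; then there exist indices $i < j$ with $x_i = x_j$. Let $n \geq 0$ be minimal such that some edge is repeated within the tail $\sigma^n(x)$ with a repetition "returning'' to an already-seen edge, and more precisely take $n \ge 0$ and $p \ge 1$ minimal so that $x_{n+1}, \dots, x_{n+p}$ are distinct but $x_{n+p+1} = x_{n+1}$ (equivalently $\sigma^{n+p}(x) = \sigma^n(x)$ at the level of infinite paths, since the two tails agree from that point on). I would argue that $\mu := x_{n+1} \cdots x_{n+p}$ is then a cycle: indeed $r(x_{n+1}) = s(x_{n})$ if $n \ge 1$ and $s(x_{n+p}) = r(x_{n+p+1}) = r(x_{n+1})$, so $\mu$ closes up into a cycle, and by minimality of $p$ its edges are distinct. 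The rigidity fact above then shows that the path, having entered the cycle $\mu$, stays on it forever, so $\sigma^{n+p}(x) = \sigma^{n}(x)$ holds as claimed, and the edges $x_1, \dots, x_{n+p}$ are distinct: the edges $x_{n+1},\dots,x_{n+p}$ are distinct by construction, and if some earlier edge $x_i$ ($i \le n$) equaled some $x_{n+\ell}$ on the cycle, then $x_i$ would lie on the cycle, forcing (again by rigidity, applied backwards to the unique incoming edges) the path from position $i$ onward to already be periodic, contradicting the minimality of $n$.

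Finally I would prove uniqueness of the pair $(n,p)$. Given two such pairs $(n,p)$ and $(n',p')$ with, say, $n \le n'$: from $\sigma^{n+p}(x)=\sigma^n(x)$ the tail $\sigma^n(x)$ is periodic with period $p$, and similarly $\sigma^{n'}(x)$ is periodic with period $p'$; since $\sigma^{n'}(x)$ is a tail of the $p$-periodic sequence $\sigma^n(x)$, it is also $p$-periodic, so its minimal period divides $\gcd$-style both $p$ and $p'$. The requirement that $x_{1},\dots,x_{n+p}$ be distinct forces $p$ to equal the minimal period of the eventually-periodic sequence $x$ and forces $n$ to be the smallest index from which periodicity holds (any smaller $n$ would make $x_1,\dots,x_{n+p}$ fail to capture the full pre-periodic part, and any larger $n$ would repeat an edge among $x_1,\dots,x_{n+p}$); hence $n = n'$ and $p = p'$. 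The two alternatives (1) and (2) are mutually exclusive since case (2) exhibits a repeated edge. This completes the argument.
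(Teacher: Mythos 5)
Your proposal is correct and follows essentially the same route as the paper: locate the first repeated edge, observe that the block $x_{n+1}\cdots x_{n+p}$ closes up into a cycle, and use the no-entrance hypothesis (your ``rigidity'' fact that each cycle vertex has a unique incoming edge) to force the tail to traverse that cycle forever, which is exactly the paper's argument that $\nu\nu\nu\cdots$ is the only infinite path with range $r(\nu)$. Your treatment is in fact slightly more thorough, since you also spell out the distinctness of $x_1,\dots,x_{n+p}$ and the uniqueness of $(n,p)$, which the paper leaves implicit.
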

\begin{proof}
Let $x \in E^\infty$. If all edges of $x$ are distinct, it is straightforward that $(2)$ is false. Now suppose that $(1)$ is false. This means that $x$ has repeated edges. Let $n$ be the smallest number such that $x_{n+1}$ appears more than once in $x$. Let $p$ be the smallest number such that $p \geq 1$ and $x_{n+p+1} = x_{n+1}$. Then $x_1, \dots, x_{n+p}$ are distinct. Let $\mu = x_1 \dots x_n$ and $\nu = x_{n+1} \dots x_{n+p}$. Note that $\nu$ is a cycle. By hypothesis, $\nu$ has no entrance. Then $\nu \nu \nu \dots$ is the only infinite path in $E$ with range $r(\nu)$. This implies that $x = \mu \nu \nu \dots$ and therefore $\sigma^{n+p}(x) = \sigma^n(x)$.
\end{proof}

\begin{lemma}
    Suppose that no cycle in $E$ has an entrance. Let $x \in E^\infty$ be such that condition $(2)$ of Lemma \ref{graph:lemma:np} holds for $n, p \geq 1$ (note that $n \geq 0$ is assumed). Given $y \in E^\infty$ with $\sigma(y) = x$, then it satisfies condition $(2)$ of that lemma for $n + 1$ and $p$.
\end{lemma}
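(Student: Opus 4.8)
The plan is to unwind the definitions and apply the previous lemma. Let $x \in E^\infty$ satisfy condition $(2)$ of Lemma \ref{graph:lemma:np} with parameters $n \geq 1$ and $p \geq 1$: that is, $\sigma^{n+p}(x) = \sigma^n(x)$, and the edges $x_1, \dots, x_{n+p}$ are distinct. Fix $y \in E^\infty$ with $\sigma(y) = x$. Since $E$ has no sinks and no sources and is row-finite, such a $y$ exists (it prepends a single edge $y_1$ with $s(y_1) = r(x) = r(x_1)$), and by construction $y_{i+1} = x_i$ for all $i \geq 1$.

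First I would note that $y$ cannot have all edges distinct: since $x$ has a repeated edge (because $x_{n+p+1} = x_{n+1}$ as $\sigma^{n+p}(x) = \sigma^n(x)$ forces the tails to agree, and more precisely $x_{n+p+j} = x_{n+j}$ for all $j \geq 1$), and the edges of $y$ from position $2$ onwards are exactly the edges of $x$, the infinite path $y$ has a repeated edge too. Hence by Lemma \ref{graph:lemma:np}, $y$ satisfies condition $(2)$: there are unique $n' \geq 0$, $p' \geq 1$ with $\sigma^{n'+p'}(y) = \sigma^{n'}(y)$ and $y_1, \dots, y_{n'+p'}$ distinct. The goal is to show $n' = n+1$ and $p' = p$.

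The key computation is the relation between shifts of $y$ and shifts of $x$: for any $k \geq 0$ we have $\sigma^{k+1}(y) = \sigma^k(\sigma(y)) = \sigma^k(x)$. Therefore $\sigma^{(n+1)+p}(y) = \sigma^{n+p}(x) = \sigma^n(x) = \sigma^{n+1}(y)$, so the pair $(n+1, p)$ is one witnessing pair for condition $(2)$ applied to $y$. Moreover the edges $y_1, \dots, y_{n+1+p}$ are $y_1, x_1, x_2, \dots, x_{n+p}$; these are distinct because $x_1, \dots, x_{n+p}$ are distinct by hypothesis, and $y_1$ differs from all of $x_1, \dots, x_{n+p}$ — otherwise $y_1$ would appear among the edges of $x$, contradicting minimality in the proof of Lemma \ref{graph:lemma:np} (or more directly: if $y_1 = x_j$ then $y_1$ repeats in $y$ within the first $n+p$ positions, and one checks that then $n$ was not minimal for $x$). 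To nail down uniqueness I would invoke the uniqueness clause of Lemma \ref{graph:lemma:np}: since $(n+1,p)$ is a pair with the distinctness property, it must be \emph{the} unique such pair, so $n' = n+1$ and $p' = p$.

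The main obstacle — really the only subtle point — is verifying that $y_1$ is genuinely distinct from $x_1, \dots, x_{n+p}$, i.e. that prepending an edge does not create a shorter repeat than the one already present in $x$. This is where I would be most careful: I would argue that if $y_1 = x_j$ for some $1 \le j \le n+p$, then the edge $x_j$ occurs both at position $j$ of $x$ and (via $y_1$) implicitly forces structure making $x$ fail the minimality of $n$, contradicting that $x$ satisfies condition $(2)$ with that particular $n$. Everything else is a direct substitution using $\sigma^{k+1}(y) = \sigma^k(x)$ together with the uniqueness part of Lemma \ref{graph:lemma:np}.
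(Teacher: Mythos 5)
Your overall architecture matches the paper's intent: verify $\sigma^{(n+1)+p}(y)=\sigma^{n+p}(x)=\sigma^{n}(x)=\sigma^{n+1}(y)$, show the first $n+1+p$ edges of $y$ are distinct, and then invoke the uniqueness clause of the preceding lemma. The computation and the uniqueness step are fine. The gap is exactly at the step you flag as the only subtle point, and the justification you sketch for it does not work. You claim that $y_1=x_j$ for some $1\le j\le n+p$ would contradict "the minimality of $n$ for $x$." But minimality of $n$ concerns repetitions among the edges of $x$ itself, and prepending an edge to $x$ creates no repetition inside $x$; nothing about $x$'s decomposition changes. Concretely, in a graph with vertices $v,w$ and edges $a$ ($r(a)=v$, $s(a)=w$), $b$ ($r(b)=w$, $s(b)=v$), and a loop $c$ at $v$, the path $x=abccc\cdots$ satisfies condition $(2)$ with the minimal $n=2$, $p=1$, yet $y=babcc\cdots$ satisfies $\sigma(y)=x$ with $y_1=b=x_2$. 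This graph of course has a cycle with an entrance, so it does not contradict the lemma; but it shows that the distinctness of $y_1$ from $x_1,\dots,x_{n+p}$ cannot follow from minimality alone — it genuinely requires the standing hypothesis that no cycle in $E$ has an entrance, which your argument never invokes at that point.

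The paper closes this gap with a short case analysis. Write $x=\mu\nu\nu\nu\cdots$ with $|\mu|=n$, $|\nu|=p$ and the edges of $\mu\nu$ distinct, so $y=e\mu\nu\nu\cdots$ with $s(e)=r(\mu_1)$. If $e=\mu_j$ for some $j$, then $\mu_1\cdots\mu_j$ is a cycle, and the edge of $x$ following $\mu_j$ (namely $\mu_{j+1}$, or $\nu_1$ when $j=n$) has range $s(\mu_j)=r(\mu_1)$ and differs from $\mu_1$ by distinctness of the edges of $\mu\nu$; hence that cycle has an entrance, a contradiction. If $e=\nu_j$, then $s(\nu_j)=r(\mu_1)$ shows $\mu_1$ is an entrance of the cycle $\nu$ (again using that $\mu_1$ is not an edge of $\nu$), another contradiction. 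You could alternatively phrase the conclusion as a violation of the distinctness of $x_1,\dots,x_{n+p}$ (the no-entrance hypothesis forces $x_{j+1}=x_1$ when $x_1\cdots x_j$ is a cycle), but either way the no-entrance hypothesis must be used; as written, your key step is asserted rather than proved, and the route you indicate is not valid.
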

\begin{proof}
As in the proof of Lemma \ref{graph:lemma:np}, we can write $x$ as $x = \mu \nu \nu \nu \dots$ where the finite paths $\mu$ and $\nu$ have lengths $n$ and $p$, respectively, and the edges of $\mu \nu$ are distinct. By hypothesis, we can write $y = e \mu \nu \nu \dots$, where $e \in E^1$. If $e$ is an edge in $\mu$, then $\mu$ contains a cycle with an entrance, which is a contradiction. If $e$ is in $\nu$, then $\mu_1$ is an entrance of $\nu$, which is a contradiction. Therefore $y$ satisfies condition $(2)$ for $n+1$ and $p$.
\end{proof}

Using similar arguments, we can prove the lemma below.

\begin{lemma}
\label{graph:lemma:yequalsx}
    Suppose that no cycle in $E$ has an entrance.  Let $x \in E^\infty$ be such that condition $(2)$ of Lemma \ref{graph:lemma:np} holds for $n = 0, p \geq 1$. If $\sigma^p(y) = x$ and $y$ satisfies condition $(2)$ of the same lemma for $n = 0$, then $y = x$.
\end{lemma}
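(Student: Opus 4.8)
The plan is to read off from the hypotheses that $x$ and $y$ are both purely periodic infinite paths whose minimal periods are $p$ and (say) $q$, to prove that $p=q$, and then to conclude $\sigma^p(y)=y=x$.

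First I would unpack condition (2) of Lemma \ref{graph:lemma:np} with $n=0$. For $x$ it gives $\sigma^p(x)=x$ with $x_1,\dots,x_p$ distinct, so $x=(x_1\cdots x_p)^\infty$, the word $\nu:=x_1\cdots x_p$ is a cycle, and $p$ is the \emph{minimal} period of $x$: a period $r$ with $1\le r<p$ would force $x_{r+1}=x_1$ with $2\le r+1\le p$, contradicting distinctness. Likewise $y$ satisfies $\sigma^q(y)=y$ for some $q\ge 1$ with $y_1,\dots,y_q$ distinct, so $y=(y_1\cdots y_q)^\infty$ and $q$ is the minimal period of $y$. (The assumption that no cycle has an entrance enters only through Lemma \ref{graph:lemma:np}, which supplies these periodic normal forms; $\nu$ and $y_1\cdots y_q$ are automatically cycles without entrance.)

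The key step is $p=q$. Writing $p=aq+b$ with $0\le b<q$, the relation $\sigma^q(y)=y$ gives $\sigma^p(y)=\sigma^b(y)=(y_{b+1}\cdots y_q\,y_1\cdots y_b)^\infty$, a cyclic rotation of the period word of $y$; its first $q$ edges are a rotation of $y_1,\dots,y_q$, hence distinct, so this sequence is purely periodic with minimal period $q$. But this sequence is exactly $x=\sigma^p(y)$, whose minimal period is $p$. Since the minimal period of a fixed periodic sequence is well-defined, $p=q$.

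Finally, $p=q$ yields $\sigma^p(y)=\sigma^q(y)=y$, and by hypothesis $\sigma^p(y)=x$; hence $y=x$, as required. The only point needing care is the bookkeeping in the $p=q$ step — checking that a cyclic rotation of a word of distinct edges again has a distinct initial block of that length, so the same minimal period — but this is elementary once the normal forms of $x$ and $y$ are in hand; everything else is immediate. An alternative, closer in spirit to the proofs of the two preceding lemmas, repeatedly uses ``no entrance'': since $\sigma^p(y)=x$, the edge $y_{p+1}$ has range $r(x)=r(\nu)$, which forces $y_{p+1}=\nu_1=x_1$; then $y_{p+2}=x_2$, and inductively $y_{p+k}=x_k$ for all $k\ge 1$, identifying a rotation of $\tau=y_1\cdots y_q$ with $\nu$, whence $p=q$ and $y=x$.
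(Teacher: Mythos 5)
Your proposal is correct, and it is a complete argument where the paper only says ``Using similar arguments, we can prove the lemma below.'' Your main route is, however, a genuinely different one from what that remark points to: you reduce everything to a minimal-period argument for purely periodic sequences (both $x$ and $y$ are periodic with distinct period blocks by condition (2), a rotation of a block of distinct edges is again a block of distinct edges, so the shifted sequence $\sigma^p(y)=\sigma^b(y)$ has minimal period $q$ while $x$ has minimal period $p$, forcing $p=q$ and hence $y=\sigma^q(y)=\sigma^p(y)=x$), and this uses the ``no cycle has an entrance'' hypothesis nowhere --- indeed your parenthetical slightly misattributes the periodic normal forms to Lemma \ref{graph:lemma:np}, when in fact they are handed to you directly by the hypothesis that $x$ and $y$ satisfy condition $(2)$ with $n=0$; so your argument is strictly more elementary and self-contained than the entrance-based reasoning of Lemmas \ref{graph:lemma:np}--\ref{graph:lemma:Fsigmakzero} that the paper intends to recycle. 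Your closing sketch (propagating $y_{p+k}=x_k$ and matching a rotation of $y_1\cdots y_q$ with $x_1\cdots x_p$) is the version closest to the paper's intended route, though as you note the identity $y_{p+k}=x_k$ is immediate from $\sigma^p(y)=x$ and does not need the no-entrance hypothesis either; in that sketch the step ``whence $p=q$'' still silently relies on the same distinctness/minimal-period counting as your main argument, so the first proof is the one to keep. Either way the lemma is established, and your version has the small added value of showing the no-entrance assumption is not needed for this particular step.
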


The following lemmas study functions in $C_c(E^\infty, \mathbb{Z})$.

\begin{lemma}
    \label{graph:lemma:Fsigmakzero}
    Suppose that no cycle in $E$ has an entrance, and let $x \in E^\infty$ have distinct edges. Given $F \in C_c(E^\infty, \mathbb{Z})$, there exists $n \geq 1$ such that $F(\sigma^k(x)) = 0$ for all $k \geq n$.
\end{lemma}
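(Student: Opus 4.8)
The plan is to exploit the compactness of the support of $F$ together with the distinctness of the edges of $x$. The hypothesis that no cycle in $E$ has an entrance is not actually needed here; it is carried over only as a standing assumption of this block of lemmas.

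First I would record the easy topological facts. Since $F \in C_c(E^\infty,\mathbb{Z})$ and $\mathbb{Z}$ is discrete, the set $K = \{z \in E^\infty : F(z) \neq 0\}$ is clopen, and it is compact because $F$ has compact support. Recall that the cylinders $Z(\mu)$ for $\mu \in E^*$ form a basis of compact open sets for the topology of $E^\infty$, $E$ being row-finite (see \cite{Raeburn, KPRR}); moreover the cylinders of \emph{positive} length still form a basis, since $\bigcup_{e \in E^1} Z(e) = E^\infty$ and any two cylinders are either disjoint or nested. Hence, $K$ being compact and open, there are finitely many finite paths $\mu_1,\dots,\mu_m$, each of length at least $1$, with $K \subseteq Z(\mu_1) \cup \dots \cup Z(\mu_m)$.

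The key step is then the following observation. Let $S = \{(\mu_i)_1 : 1 \le i \le m\} \subseteq E^1$ be the (finite) set of first edges of the paths $\mu_i$. If $F(\sigma^k(x)) \neq 0$ for some $k \geq 0$, then $\sigma^k(x) = x_{k+1}x_{k+2}\cdots$ lies in $K$, hence in some $Z(\mu_i)$, so it begins with $\mu_i$; in particular its first edge satisfies $x_{k+1} = (\mu_i)_1 \in S$. Thus the map $k \mapsto x_{k+1}$ sends $\{k \geq 0 : F(\sigma^k(x)) \neq 0\}$ into the finite set $S$. Since $x$ has distinct edges, this map is injective on all of $\mathbb{N}$, so $\{k \geq 0 : F(\sigma^k(x)) \neq 0\}$ injects into $S$ and is therefore finite. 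Taking $n$ to be strictly larger than every element of this set (or $n = 1$ if it is empty) gives the conclusion $F(\sigma^k(x)) = 0$ for all $k \geq n$.

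I do not expect any genuine obstacle: the only point requiring a little care is the topological bookkeeping that a compact open subset of $E^\infty$ is contained in a finite union of cylinders of positive length, and this rests only on the standard description of the infinite-path space of a row-finite graph. Everything else reduces to a one-line pigeonhole argument.
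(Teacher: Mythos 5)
Your proof is correct, and it takes a genuinely (if modestly) different route from the paper's. The paper covers $\mathrm{supp}\,F$ by the \emph{vertex} cylinders $\lbrace Z(v) \rbrace_{v \in E^0}$ and pigeonholes on the range vertices $r(x_1), r(x_2), \dots$; to know these vertices are distinct it genuinely uses the standing hypothesis that no cycle in $E$ has an entrance (a repeated range vertex on a path with distinct edges would produce a cycle with an entrance). You instead cover the (compact, clopen) set where $F \neq 0$ by finitely many cylinders of positive length and pigeonhole on the \emph{first edges} of $\sigma^k(x)$, using only the stated hypothesis that the edges of $x$ are distinct. The payoff of your version is that it shows the no-entrance hypothesis is superfluous for this particular lemma, so the statement holds for any row-finite graph with no sinks or sources; the cost is a small amount of extra bookkeeping (that positive-length cylinders form a basis, e.g.\ via $Z(v) = \bigcup_{e: r(e)=v} Z(e)$ and row-finiteness, and that $\lbrace z : F(z) \neq 0 \rbrace$ is compact). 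Both arguments are compactness-plus-pigeonhole and of comparable length; since the lemma is only ever invoked under the no-entrance hypothesis, the extra generality is not exploited in the paper, but your observation is sound.
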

\begin{proof}
    Let $ x = x_1 x_2 x_3 \dots$. Then the vertices $r(x_1), r(x_2), \dots$ are distinct, otherwise $E$ would have a cycle with an entrance. Moreover, $\sigma^k(x) \in Z(r(x_k))$ for $k \geq 0$. Note that $\lbrace Z(v) \rbrace_{v \in E^0}$ is a cover of $\mathrm{supp\hphantom{.}} F$ by open sets. By compactness, the set  $\mathrm{supp\hphantom{.}} F$ has a finite subcover $\lbrace Z(v) \rbrace_{v \in I}$. Therefore, there are finitely many elements $k$ such that $F(\sigma^k(x)) \neq 0$.
\end{proof}

By a similar argument, we can prove the following:
\begin{lemma}
\label{graph:lemma:rxdistinct}
    Suppose that no cycle in $E$ has an entrance. Let $\lbrace x^{(k)} \rbrace_{k \geq 1}$ be a sequence in $E^\infty$ such that
    \begin{itemize}
        \item $\sigma(x^{(k+1)}) = x^{(k)}$,
        \item $r(x^{(1)}), r(x^{(2)}), \dots$ are distinct.
    \end{itemize}
    Then there exists an $n \geq 1$ such that $F(x^{(k)}) = 0$ for all $k \geq n$.
\end{lemma}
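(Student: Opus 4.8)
The plan is to mimic the proof of Lemma \ref{graph:lemma:Fsigmakzero}, the difference being that instead of a single infinite path $x$ we have a sequence $\{x^{(k)}\}_{k \geq 1}$ whose terms are related by $\sigma(x^{(k+1)}) = x^{(k)}$. The key observation is that since $r(x^{(1)}), r(x^{(2)}), \dots$ are distinct vertices, the cylinders $Z(r(x^{(k)}))$ are pairwise disjoint, and each $x^{(k)}$ lies in $Z(r(x^{(k)}))$.

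First I would note that $\{Z(v)\}_{v \in E^0}$ is an open cover of the compact set $\operatorname{supp} F$, so there is a finite subcover $\{Z(v)\}_{v \in I}$ with $I \subset E^0$ finite. If $F(x^{(k)}) \neq 0$ then $x^{(k)} \in \operatorname{supp} F$, so $x^{(k)} \in Z(v)$ for some $v \in I$; since $x^{(k)}$ also lies in $Z(r(x^{(k)}))$ and the range of an infinite path is uniquely determined, we must have $v = r(x^{(k)})$, hence $r(x^{(k)}) \in I$. Because the vertices $r(x^{(1)}), r(x^{(2)}), \dots$ are distinct and $I$ is finite, there can be only finitely many $k$ with $r(x^{(k)}) \in I$, hence only finitely many $k$ with $F(x^{(k)}) \neq 0$. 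Taking $n$ to be one more than the largest such $k$ (or $n = 1$ if there are none) gives $F(x^{(k)}) = 0$ for all $k \geq n$.

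I do not expect any real obstacle here: the argument is a routine compactness argument once one has recorded that the $Z(r(x^{(k)}))$ are disjoint and that $x^{(k)} \in Z(r(x^{(k)}))$. The only point to be slightly careful about is that membership in $\operatorname{supp} F \subset E^\infty$ together with membership in some $Z(v)$ forces $v$ to be the range vertex of the path, which is immediate since distinct vertices give disjoint cylinders. The hypothesis that no cycle in $E$ has an entrance is not actually needed for this particular lemma's proof — it is inherited from the context and presumably used to guarantee, in the applications, that such sequences with distinct range vertices exist (cf. Lemma \ref{graph:lemma:np}).
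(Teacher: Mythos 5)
Your argument is correct and is exactly the compactness argument the paper intends: it states this lemma as following "by a similar argument" to Lemma \ref{graph:lemma:Fsigmakzero}, namely covering $\mathrm{supp}\,F$ by the cylinders $Z(v)$, extracting a finite subcover, and using that the $x^{(k)}$ lie in the pairwise disjoint cylinders $Z(r(x^{(k)}))$. Your side remark is also accurate: the no-entrance hypothesis is not needed here since the distinctness of the $r(x^{(k)})$ is assumed, whereas in Lemma \ref{graph:lemma:Fsigmakzero} that hypothesis is what produces the distinct range vertices.
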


\begin{lemma}
\label{graph:lemma:sigmamu}
    Let $\mu \in E^*$. Then
    \begin{align*}
        \sigma_\ast^{\vert \mu \vert + 1} (1_{Z(\mu)})
        = \sum_{\substack{e \in E^1 \\ r(e) = s(\mu)}} 1_{Z(s(e))}.
    \end{align*}
\end{lemma}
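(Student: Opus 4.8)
The plan is to compute $\sigma_\ast^{|\mu|+1}(1_{Z(\mu)})$ by applying $\sigma_\ast$ one step at a time, using the explicit description of preimages under $\sigma$: every $y\in E^\infty$ with $\sigma(y)=x$ has the form $y=ex$ for a unique edge $e\in E^1$ with $s(e)=r(x)$, and every such edge arises this way. Thus $\sigma_\ast(f)(x)=\sum_{\sigma(y)=x}f(y)$ becomes a sum over the (finitely many, since $E$ is row-finite) edges $e$ with $r(e)=r(x)$ of the values $f(ex)$.

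First I would prove the intermediate claim that for a path $\mu=\mu_1\cdots\mu_n$ with $n=|\mu|\geq 1$ one has $\sigma_\ast(1_{Z(\mu)})=1_{Z(\mu')}$, where $\mu'=\mu_2\cdots\mu_n$ if $n\geq 2$ and $\mu'=s(\mu_1)$ (a path of length zero, i.e.\ a vertex) if $n=1$. This is a direct check: $ex\in Z(\mu)$ exactly when $e=\mu_1$ and $x\in Z(\mu')$, and in that case the compatibility $s(e)=r(x)$ is automatic because $x\in Z(\mu')$ forces $r(x)=s(\mu_1)$; so the defining sum has exactly one term, equal to $1$, when $x\in Z(\mu')$, and is empty otherwise. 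Iterating this $|\mu|$ times peels off all the edges of $\mu$ and gives $\sigma_\ast^{|\mu|}(1_{Z(\mu)})=1_{Z(s(\mu))}$.

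Then it remains to apply $\sigma_\ast$ once more, now to the vertex cylinder $1_{Z(v)}$ with $v=s(\mu)$. Using the same preimage description, $ex\in Z(v)$ iff $r(ex)=r(e)=v$, so $\sigma_\ast(1_{Z(v)})(x)$ counts the edges $e$ with $r(e)=v$ and $s(e)=r(x)$; grouping these by their source and observing that $1_{Z(s(e))}(x)=1$ precisely when $r(x)=s(e)$ yields $\sigma_\ast(1_{Z(v)})=\sum_{e\in E^1,\,r(e)=v}1_{Z(s(e))}$. Combining this with the previous display for $v=s(\mu)$ gives the lemma. The sum is finite because $E$ is row-finite, so the right-hand side lies in $C_c(E^\infty,\mathbb{Z})$, as does each $1_{Z(\mu)}$ (every $Z(\mu)$ being compact open).

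I do not expect a genuine obstacle; the only things to watch are bookkeeping: the degenerate base case $|\mu|=1$, where the ``shifted path'' $\mu'$ is a vertex rather than an edge path, and checking the range/source compatibility conditions in the preimage description (they turn out to be automatically satisfied in each case), together with recording row-finiteness so that the final sum remains compactly supported.
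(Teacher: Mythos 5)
Your proof is correct and is essentially the same argument as the paper's: both compute the pushforward by describing preimages of the shift and counting, the paper doing it in one step (any $y\in Z(\mu)$ with $\sigma^{|\mu|+1}(y)=x$ must be $y=\mu e x$ with $r(e)=s(\mu)$ and $s(e)=r(x)$), while you peel off one edge at a time via $\sigma_\ast(1_{Z(\mu)})=1_{Z(\mu')}$. The extra bookkeeping you flag (the $|\mu|=1$ base case, row-finiteness for compact support) is handled correctly and changes nothing of substance.
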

\begin{proof}
    Let $x \in E^\infty$. Then
    \begin{align*}
    \sigma_\ast^{\vert \mu \vert + 1}(1_{Z(\mu)})(x)
    = \sum_{y : \sigma^{\vert \mu \vert + 1}(y) = x} 1_{Z(\mu)}(y).
    \end{align*}
    If $1_{Z(\mu)}(y) = 1$, we must have $y = \mu e x$ for some $e \in E^1$. Then
    \begin{align*}
    \sigma_\ast^{\vert \mu \vert + 1}(1_{Z(\mu)}(x)
    = \sum_{\substack{e \in E^1 \\ r(e) = s(\mu) \\ s(e) = r(x)}} 1
    = \sum_{\substack{e \in E^1 \\ r(e) = s(\mu) }} 1_{Z(s(e))}(x).
    \end{align*}
    Therefore the result holds.
\end{proof}

Now we show Schafhauser's theorem \cite{Schafhauser-AFEgraph} as an application of Theorem \ref{thm:main} with the further assumptions that the graph is row-finite with no sinks or sources.

\begin{corollary}
The following are equivalent:
\begin{enumerate}[(i)]
\item $C^*(E)$ is AF embeddable,
\item $C^*(E)$ is AF quasidiagonal,
\item $C^*(E)$ is AF stably finite,
\item $C^*(E)$ is finite,
\item No cycle in $E$ has an entrance.
\end{enumerate}
\end{corollary}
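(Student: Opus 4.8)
The plan is to deduce this corollary from Theorem~\ref{thm:main} applied to the Deaconu-Renault groupoid $\mathcal{G} = G_E$ arising from the shift map $\sigma: E^\infty \to E^\infty$, under our standing assumptions that $E$ is row-finite with no sinks or sources. Since $C^*(E) \cong C^*(G_E)$ by \cite[Theorem 4.2]{KPRR}, Theorem~\ref{thm:main} already gives the equivalence of (i), (ii), (iii) with the condition $\mathrm{Im}(\sigma_\ast - \mathrm{id}) \cap C_c(E^\infty, \mathbb{N}) = \{0\}$, and—noting that a unital graph algebra is finite iff it is stably finite, or more directly invoking Schafhauser's equivalences—(iv) will follow along the way. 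So the real content is to prove that
\[
\mathrm{Im}(\sigma_\ast - \mathrm{id}) \cap C_c(E^\infty, \mathbb{N}) = \{0\}
\qquad\Longleftrightarrow\qquad
\text{no cycle in }E\text{ has an entrance.}
\]

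For the direction $(\Leftarrow)$, assume no cycle has an entrance; I want to show that if $\sigma_\ast(f) - f = h$ with $h \in C_c(E^\infty, \mathbb{N})$ then $h = 0$. The idea is to evaluate the telescoping-style identity along forward orbits. Fix $x \in E^\infty$ and consider the sum $\sum_{k \geq 0} h(\sigma^k(x))$. Using the structure lemmas (Lemma~\ref{graph:lemma:np} classifies $x$ as either having all distinct edges or being eventually periodic with a unique cycle), in the "distinct edges" case Lemma~\ref{graph:lemma:Fsigmakzero} shows $f(\sigma^k(x)) = 0$ for $k$ large, so summing $\sigma_\ast(f)(\sigma^k(x)) - f(\sigma^k(x)) = h(\sigma^k(x))$ over $k$ and using that $\sigma_\ast(f)(\sigma^k(x)) = \sum_{y: \sigma(y) = \sigma^k(x)} f(y) \geq f(\sigma^{k-1}(x))$ (each preimage contributes non-negatively once we know $f \geq 0$ on the relevant orbit—this needs care since a priori $f$ is only $\mathbb{Z}$-valued; the trick, following the proof of Theorem~\ref{thm:main}, is first to replace $f$ by $\sigma_\ast^n(f)$ for suitable $n$ to make it non-negative) produces a telescoping inequality forcing $h$ to vanish along the whole orbit. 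In the eventually-periodic case, because the cycle $\nu$ has no entrance, the preimage structure under $\sigma$ is rigid (Lemmas after \ref{graph:lemma:np}, especially \ref{graph:lemma:yequalsx}), so $\sigma_\ast$ acts almost like a bijection on the relevant finite set of periodic points, and the same telescoping argument—now around the cycle—kills $h$. Covering $E^\infty$ by such orbits gives $h = 0$.

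For the direction $(\Rightarrow)$, equivalently I prove the contrapositive: if some cycle $\nu$ in $E$ has an entrance, I exhibit a nonzero element of $\mathrm{Im}(\sigma_\ast - \mathrm{id}) \cap C_c(E^\infty, \mathbb{N})$. Write $\nu = \nu_1 \cdots \nu_p$ with $r(\nu) = s(\nu)$ and let $e \neq \nu_i$ be an edge with $r(e) = r(\nu_i)$ for some $i$; by rotating the cycle we may take $i = 1$, so $r(e) = r(\nu) = s(\nu)$. Take $\mu = \nu$ (a closed path) and apply Lemma~\ref{graph:lemma:sigmamu}: $\sigma_\ast^{p}(1_{Z(\nu)}) = \sum_{g \in E^1,\ r(g) = s(\nu)} 1_{Z(s(g))}$, and the right-hand side contains at least the two distinct terms $1_{Z(s(\nu_1))}$ and $1_{Z(s(e))}$. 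Combining this with $1_{Z(\nu)} = 1_{Z(\nu)}$ and exploiting that $\sigma_\ast^{p}(1_{Z(\nu)}) - 1_{Z(\nu)}$ can be written (via a finite telescoping sum over $\sigma_\ast - \mathrm{id}$, as in Corollary~\ref{corollary:ofthm1}) as an element of $\mathrm{Im}(\sigma_\ast - \mathrm{id})$, I obtain a function $h \in C_c(E^\infty, \mathbb{N})$ that is nonzero precisely because the entrance edge $e$ forces the image of $Z(\nu)$ under $\sigma_\ast^p$ to strictly dominate $1_{Z(\nu)}$ somewhere (on $Z(s(e))$). This produces the desired nonzero element, completing the contrapositive.

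The main obstacle I expect is bookkeeping in the $(\Leftarrow)$ direction: one must simultaneously handle the two orbit types from Lemma~\ref{graph:lemma:np}, pass from $\mathbb{Z}$-valued $f$ to a non-negative representative by precomposing with a power of $\sigma_\ast$ (imitating the reduction in the proof of Theorem~\ref{thm:main}), and ensure the infinite sums along orbits are finite—which is exactly what Lemmas~\ref{graph:lemma:Fsigmakzero} and \ref{graph:lemma:rxdistinct} are designed to guarantee. Once those finiteness and positivity points are secured, the telescoping cancellation is routine. A secondary (minor) point is confirming that "finite" in item (iv) is genuinely equivalent to "stably finite" here; since $C^*(E)$ is unital (as $E^0$ would need to be finite for unitality—more carefully, one should either restrict to that case or cite that for graph algebras stable finiteness and the absence of cycles-with-entrance are already known to coincide with finiteness by Schafhauser's work), I would simply invoke the relevant equivalence from \cite{Schafhauser-AFEgraph} rather than reprove it.
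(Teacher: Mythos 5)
Your reduction of the corollary to the condition $\mathrm{Im}(\sigma_\ast-\mathrm{id})\cap C_c(E^\infty,\mathbb{N})=\{0\}$ via Theorem \ref{thm:main}, and your treatment of the direction ``cycle with an entrance $\Rightarrow$ not AF embeddable'' via Lemma \ref{graph:lemma:sigmamu} and Corollary \ref{corollary:ofthm1}, follow the paper's route (one slip: Lemma \ref{graph:lemma:sigmamu} computes $\sigma_\ast^{|\nu|+1}(1_{Z(\nu)})$, not $\sigma_\ast^{p}(1_{Z(\nu)})$; the paper avoids this by comparing $\sigma_\ast^{|\nu|}(g)$ with $g=\sigma_\ast(1_{Z(\nu)})$, using $0\le g\le 1_{Z(s(\nu_1))}$, whereas your direct comparison works only because $\sigma_\ast^{p}(1_{Z(\nu)})=1_{Z(r(\nu))}$, which is not what the lemma you cite says).

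The genuine gap is in the hard direction (no cycle has an entrance $\Rightarrow$ $h=0$). Your telescoping inequality $\sigma_\ast(f)(\sigma^k(x))\ge f(\sigma^{k-1}(x))$ requires knowing that $f\ge 0$ on the other points of each fibre, and your proposed fix --- replace $f$ by $\sigma_\ast^{n}(f)$ ``to make it non-negative'', imitating the proof of Theorem \ref{thm:main} --- does not work: $\sigma_\ast$ preserves non-negativity but does not create it (for $f=-1_V$ one has $\sigma_\ast^{n}(f)\le 0$ and nonzero for every $n$), and in the proof of Theorem \ref{thm:main} the application of $\sigma_\ast^{n}$ serves only to annihilate the boundary term $\partial_1^{(n)}F$, not to gain positivity of $f$. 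Moreover, even granting the inequality, summing it along an orbit only bounds $\sum_k h(\sigma^k(x))$ below by values of $f$; it does not force $h=0$. The actual content of the paper's proof is precisely the missing sign analysis: using Lemmas \ref{graph:lemma:np}--\ref{graph:lemma:rxdistinct} and compactness of supports it shows $f=0$ (hence $h=0$) on paths with distinct edges, $f\le 0$ on eventually periodic paths with $n,p\ge 1$, $h=0$ and $\sigma_\ast^{p}(f)(x)=f(x)$ on periodic paths ($n=0$), and then, via the rotated points $z_k=\sigma^{kn(2p-1)}(\sigma^n(x))$ and Lemma \ref{graph:lemma:yequalsx}, that $f(x)=0$ in the remaining cases, before concluding $h\equiv 0$. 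Your sketch neither supplies this case analysis nor a correct substitute for it, so as written the ``no entrance'' direction does not close.
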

\begin{proof}
Note that $C^*(E) \cong C^*(G_E)$. We are going to prove that $C^*(E)$ is not AF embeddable if, and only if, $E$ has a cycle with an entrance. First we prove the implication $(\Rightarrow)$. So, suppose that $C^*(E)$ is not AF embeddable. By Theorem \ref{thm:main}, there are two non-zero functions $f \in C_c(E^\infty, \mathbb{Z})$, $h \in C_c(E^\infty, \mathbb{N})$ such that $\sigma_\ast(f) - f = h$.

Assume that no cycle in $E$ has an entrance. With this assumption, we will find a contradiction by showing that $h = 0$. In order to do this, we will consider the different types of infinite paths $x$ given by Lemma \ref{graph:lemma:np}:
\begin{enumerate}[(a)]
    \item If $x$ has distinct edges, then $h(x) = 0$

    Let $x \in E^\infty$ have distinct edges. Suppose that $f(x) > 0$. Since $\sigma_\ast(f)(x) \geq f(x)$, there exists $x^{(2)} \in E^\infty$ with distinct edges such that $\sigma(x^{(2)}) = x$ and $f(x^{(2)}) > 0$. By repeating this argument, we find a sequence of infinite paths $x^{(k)}$ such that $x^{(1)} = x$ and, for all $k \geq 1$, $\sigma(x^{(k+1)}) = x^{(k)}$ and $f(x^{(k)}) > 0$. Since $x$ has distinct edges, Lemma \ref{graph:lemma:rxdistinct} implies that $f(x^{(k)}) = 0$ for some $k$, which is a contradiction. Therefore, we must have $f(x) \leq 0$. In particular, Lemma \ref{graph:lemma:np} implies that $y$ has distinct edges for all $y$ with $\sigma(y) = x$. Then $f(y) \leq 0$.

    Now suppose that $f(x) < 0$. Then
    \begin{align*}
        f(\sigma(x)) \leq \sigma_\ast(f)(\sigma(x))
        = \sum_{y : \sigma(y) = \sigma(x)} f(y) \leq f(x),
    \end{align*}
    because $\sigma(x)$ has distinct edges and thus $f(y) \leq 0$ for all $y$ with $\sigma(y) = \sigma(x)$. By repeating this argument, we have that, for all $k \geq 1$, $f(\sigma^k(x)) \leq f(x) < 0$. This contradicts Lemma \ref{graph:lemma:Fsigmakzero}. Therefore, $f(x) = 0$ for all $x \in E^\infty$ with distinct edges. Since every $y$ with $\sigma(y) = x$ also has distinct edges, it follows that $\sigma_\ast(f)(x) = 0$. Therefore $h(x) = 0$.

    \item If $x$ satisfies condition $(2)$ of Lemma \ref{graph:lemma:np} for $n, p \geq 1$, then $f(x) \leq 0$

    Assume that $f(x) > 0$. By similar arguments to the previous item, we can find a sequence $x^{(k)}$ such that $x^{(1)} = x$, $\sigma(x^{(k+1)}) = x^{(k)}$, and $f(x^{(k)}) > 0$. By Lemma \ref{graph:lemma:rxdistinct} all $r(x^{(k)})$ are distinct. This implies that $\lbrace Z(r(x^{(k)})) \rbrace_{k \geq 1}$ is an infinite open cover of $\mathrm{supp}\hphantom{.}f$ by disjoint subsets, which is a contradiction. Therefore $f(x) \leq 0$.

    \item If $x$ satisfies condition $(2)$ of of Lemma \ref{graph:lemma:np} for $n = 0, p \geq 1$, then $h(x) = 0$ and $\sigma_\ast^p(f)(x) = f(x)$

    Note that by assumption, $\sigma^p(x) = x$. By Lemma \ref{graph:lemma:yequalsx},
    \begin{align*}
        \sigma_\ast^p(f)(x)
        = \sum_{y : \sigma^p(y) = x} f(y)
        = \sum_{\substack{y : \sigma^p(y) = x \\ y \neq x}} f(y) + f(x) 
        \leq f(x).
    \end{align*}

    Using the same telescoping sums from the proof of Corollary \ref{corollary:ofthm1}, we have
    \begin{align*}
        f(x) \leq \sigma_\ast(f)(x)
        \leq \sigma_\ast^2(f)(x) 
        \leq \sigma_\ast^p(f)(x)
        \leq f(x).
    \end{align*}
    This implies that $h(x) = \sigma_\ast(f)(x) - f(x) = 0$.

    \item If $x$ satisfies condition $(2)$ of Lemma \ref{graph:lemma:np} for $n = 1$, $p \geq 1$, then $f(x) = 0$

    Let $z_0 = \sigma^n(x)$ and, for every $k \geq 1$, let $z_k = \sigma^{kn(2p-1)}(z_0)$. Note that $\sigma^p(z_0) = z_0$. Moreover, $z_0$ satisfies condition $(2)$ of Lemma \ref{graph:lemma:np} for $0$ and $p$. Note that $\sigma^n(z_{k+1}) = z_k$. Indeed,
    \begin{align*}
        \sigma^n(z_{k+1})
        = \sigma^{n}(\sigma^{(k+1)n(2p-1)}(z_0))
        = \sigma^{kn(2p-1)}(\sigma^{2pn}(z_0))
        = \sigma^{kn(2p-1)}(z_0)
        = z_k.
    \end{align*}
    Note that each $z_k$ satisfies condition $(2)$ of Lemma \ref{graph:lemma:np} for $0$ and $p$. So, we have
    \begin{align}
    \label{graph:eqn:fzk}
        f(z_k)
    \leq \sigma_\ast^n(f)(z_k)
    = \sum_{y: \sigma^n(y) = z_k} f(y)
    = f(z_{k+1}) + \sum_{\substack{\sigma^n(y) = z_k \\ y \neq z_{k+1}}} f(y).
    \end{align}
    But if $y \neq z_{k+1}$ and $\sigma^n(y) = z_k$, then $y$ satisfies condition $(2)$ of Lemma \ref{graph:lemma:np} for $n, p \geq 1$. So $f(y) \leq 0$ by item (b) of this proof. Hence, $f(z_k) \leq f(z_{k+1})$. It is straightforward from the definition that $z_p = z_0$.  This implies that $f(z_0) = f(z_1) = \dots = f(z_p)$. By \eqref{graph:eqn:fzk}, $f(y) = 0$ for all $y \neq z_1$ such that $\sigma^n(y) = z_0 = \sigma^n(x)$. Since $z_1$ satisfies condition $(2)$ of \ref{graph:lemma:np} for $0$ and $p$, we have that $x \neq z_1$. Therefore $f(x) = 0$.

    \item For every $x \in E^\infty$ with repeated edges, we have $h(x) = 0$

    Note that $x$ satisfies condition $(2)$ of Lemma \ref{graph:lemma:np} for $n \geq 0$ and $p \geq 1$. Also, $f(x) \leq 0$ by (b). Since we covered the case $n = 0$ in item $(c)$, we assume here that $n \geq 1$. We claim that $\sigma_\ast^k(f)(x) = 0$. If this were false, we could find a sequence $x^{(k)}$ such that $x^{(1)}= x$, $\sigma(x^{(k+1)}) = x^{(k)}$ and $f(x^{(k)}) \neq 0$, which would contradict the fact that $f$ is compactly supported. Hence,
    \begin{align*}
        0 = \sigma_\ast^k(f)(x) \geq \sigma_\ast^{k-1}(f)(x) \geq \dots \sigma_\ast(f)(x) \geq f(x) \leq 0.
    \end{align*}
    Therefore, $h(x) = \sigma_\ast(f)(x) - f(x) = 0$.
\end{enumerate}

Now let $x \in E^\infty$. In item (a), we showed that $h(x) = 0$ if $x$ has distinct edges; in item (e), we proved that $h(x) = 0$ if $x$ has repeated edges. This implies that $h = 0$, which is a contradiction. Therefore, $E$ has a cycle with an entrance.

Finally, we show $(\Leftarrow)$. Suppose $E$ has a cycle with an entrance. Then there is a finite path $\mu \in E^*$ with $s(\mu) = r(\mu)$, and there exists an $e_1 \in E^1$ such that $r(e_1) = s(\mu)$ and $e_1 \neq \mu_1$. Since $E$ has no sinks, all cycles in $E$ must have length greater than zero.

Note that $0 \leq \sigma_\ast(1_{Z(\mu)}) \leq 1_{Z(s(\mu_1))}$. In fact, $\sigma_\ast(1_{Z(\mu)}) \geq 0$ by definition of $\sigma_\ast$. Let $x \in E^\infty$. Then
\begin{align*}
    \sigma_\ast(1_{Z(\mu)})(x)
    = \sum_{y : \sigma(y) = x} 1_{Z(\mu)}(y)
    = 1_{Z(\mu)}(\mu_1 x) 1_{Z(s(\mu_1))}(x)
    \leq 1_{Z(s(\mu))}( x).
\end{align*}
By Lemma \ref{graph:lemma:sigmamu}, we have
\begin{align*}
    \sigma_\ast^{\vert \mu \vert + 1}(1_{Z(\mu)})
    = \sum_{\substack{e \in E^1 \\ r(e) = s(\mu)}} 1_{Z(s(e))}
    \geq  1_{Z(s(\mu_1))} + 1_{Z(s(e_1))}
    \geq  \sigma_\ast(1_{Z(\mu)}) + 1_{Z(s(e_1))}.
\end{align*}
Then there exists a non-zero function $h \in C_c(E^\infty, \mathbb{N})$ such that
\begin{align*}
    \sigma_\ast^{\vert \mu \vert}(\sigma_\ast(1_{Z(\mu)})) - \sigma_\ast(1_{Z(\mu)}) = h.    
\end{align*}
By Corollary \ref{corollary:ofthm1}, this implies that $C^*(E) \cong C^*(G_E)$ is not AF embeddable.

So far we have proved $(i) \Leftrightarrow (v)$. By Theorem \ref{thm:main}, we have the equivalence $(i) \Leftrightarrow (ii) \Leftrightarrow (iii) \Leftrightarrow (v)$. The implication $(iii) \Rightarrow (iv)$ is straightforward, and $(iv) \Rightarrow (v)$ follows from the same argument used in \cite[Theorem 1.1]{Schafhauser-AFEgraph}.
\end{proof}

\subsection{The crossed product $C(X) \rtimes_\sigma \mathbb{Z}$}

Now we apply Theorem \ref{thm:main} to show that it generalises Pimsner's theorem \cite[Theorem 9]{Pimsner} (see also \cite[Theorem 11.5]{Brown-quasidiagonal} by Brown). In contrast to these references, we do not assume that $X$ is a compact metric space. Instead, we fix $X$ to be a locally compact, Hausdorff, second countable, and totally disconnected space.

Note that, when $\sigma: X \rightarrow X$ is a homeomorphism, it follows from Deaconu's paper \cite{Deaconu} that $C(X) \rtimes_\sigma \mathbb{Z}$ is isomorphic to $C^*(\mathcal{G})$.

\begin{definition}
    We say that $\sigma$ \emph{compresses} an open set $\mathcal{U} \subset X$ if $\sigma(\mathcal{U}) \subsetneq \mathcal{U}$. So, the homeomorphism $\sigma$ \emph{compresses no compact open sets} if the following condition holds for all $\mathcal{U} \subset X$ compact open: $\sigma(\mathcal{U}) \subset \mathcal{U} \Rightarrow \sigma(\mathcal{U}) = \mathcal{U}.$
\end{definition}

The definition above is stated differently in \cite{Brown-quasidiagonal}. Recall that our topological space is more specific, being totally disconnected. For our purposes, both definitions are equivalent when $X$ is assumed to be compact. Now we show that Theorem \ref{thm:main} can be applied to prove \cite[Theorem 9]{Pimsner} (or \cite[Theorem 11.5]{Brown-quasidiagonal}).

\begin{corollary}
    Let $X$ be a locally compact, Hausdorff, second countable, totally disconnected space. Then the following are equivalent:
    \begin{enumerate}[(i)]
    \item $C_0(X) \rtimes_\sigma \mathbb{Z}$ is AF embeddable,
    \item $C_0(X) \rtimes_\sigma \mathbb{Z}$ is quasidiagonal,
    \item $C_0(X) \rtimes_\sigma \mathbb{Z}$ is stably finite,
    \item $\sigma$ compresses no compact open sets.
    \end{enumerate}
\end{corollary}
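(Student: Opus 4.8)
The plan is to deduce this corollary from Theorem~\ref{thm:main} by translating the K-theoretic/combinatorial condition (iv) of that theorem, namely $\mathrm{Im}(\sigma_\ast - \mathrm{id}) \cap C_c(X,\mathbb{N}) = \lbrace 0 \rbrace$, into the statement that $\sigma$ compresses no compact open set, under the standing assumption that $\sigma$ is a homeomorphism. Since $\sigma$ is a homeomorphism, for every $x\in X$ the only $y$ with $\sigma(y)=x$ is $y=\sigma^{-1}(x)$, so the operator $\sigma_\ast$ acting on $C_c(X,\mathbb{Z})$ is simply $\sigma_\ast(f) = f\circ\sigma^{-1}$, i.e.\ $\sigma_\ast$ is the automorphism of $C_c(X,\mathbb{Z})$ induced by $\sigma$. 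In particular $\sigma_\ast(1_{\mathcal{U}}) = 1_{\sigma(\mathcal{U})}$ for every compact open $\mathcal{U}$. First I would record the isomorphism $C_0(X)\rtimes_\sigma\mathbb{Z}\cong C^*(\mathcal{G})$ for $\sigma$ a homeomorphism (already cited from Deaconu), so that items (i)--(iii) here match items (i)--(iii) of Theorem~\ref{thm:main}, and it remains only to prove the equivalence of (iv) with condition (iv) of Theorem~\ref{thm:main}.

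For the direction ``$\sigma$ compresses some compact open set $\Rightarrow$ (iv) of Theorem~\ref{thm:main} fails'', suppose $\mathcal{U}$ is compact open with $\sigma(\mathcal{U})\subsetneq\mathcal{U}$. Put $f = 1_{\mathcal{U}}\in C_c(X,\mathbb{Z})\subset C_c(X,\mathbb{N})$. Then
\begin{align*}
(\sigma_\ast - \mathrm{id})(f) = 1_{\sigma(\mathcal{U})} - 1_{\mathcal{U}} = -\,1_{\mathcal{U}\setminus\sigma(\mathcal{U})},
\end{align*}
so $(\mathrm{id}-\sigma_\ast)(f) = 1_{\mathcal{U}\setminus\sigma(\mathcal{U})}$ is a nonzero element of $C_c(X,\mathbb{N})$; equivalently, taking $g=-f$ or simply noting that $\mathrm{Im}(\sigma_\ast-\mathrm{id})$ is a subgroup, the nonzero nonnegative function $1_{\mathcal{U}\setminus\sigma(\mathcal{U})}$ lies in $\mathrm{Im}(\sigma_\ast-\mathrm{id})\cap C_c(X,\mathbb{N})$. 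Hence condition (iv) of Theorem~\ref{thm:main} fails, so $C^*(\mathcal{G})$ is not AF embeddable.

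For the converse, suppose condition (iv) of Theorem~\ref{thm:main} fails: there is $f\in C_c(X,\mathbb{Z})$ with $h:=(\sigma_\ast-\mathrm{id})(f)\in C_c(X,\mathbb{N})$ and $h\neq 0$. Writing $f = \sum_{k} a_k 1_{V_k}$ over a finite partition of $\mathrm{supp}\,f$ into disjoint compact open sets $V_k$ (possible since $X$ is totally disconnected), consider, for each integer $t$, the compact open superlevel set $\mathcal{U}_t = \lbrace x : f(x) \geq t\rbrace$. Then one has the layer-cake identity $f = \sum_{t\geq 1} 1_{\mathcal{U}_t} - \sum_{t\leq 0} 1_{\mathcal{U}_t^c}$ on $\mathrm{supp}\,f$ with only finitely many nonzero terms, and since $\sigma_\ast$ is an automorphism commuting with these finite sums, $h = \sum_t (1_{\sigma(\mathcal{U}_t)} - 1_{\mathcal{U}_t})$. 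The key observation is that $h\geq 0$ everywhere forces, for the relevant $t$, $1_{\sigma(\mathcal{U}_t)} - 1_{\mathcal{U}_t}\geq 0$ on suitable pieces; more precisely, since $\sigma$ is a homeomorphism the counting function $x\mapsto \sum_{y:\sigma(y)=x} f(y)$ is just $f(\sigma^{-1}(x))$, so $h = f\circ\sigma^{-1} - f$, and $h\geq 0$ means $f\circ\sigma^{-1}\geq f$, i.e.\ $f(\sigma(x))\leq f(x)$ for all $x$. Thus $f$ is nonincreasing along $\sigma$-orbits. Because $f$ has compact support and $\sigma$ is a homeomorphism, iterating $f(\sigma^n(x))\leq f(x)$ and using that $f$ takes finitely many values and is eventually $0$ along escaping orbits shows that in fact $f(\sigma(x)) = f(x)$ for all $x$ outside the support region in a way that forces $h=0$ unless some superlevel set is genuinely compressed: taking $t_0$ to be a value with $\mathcal{U}_{t_0}\neq\sigma(\mathcal{U}_{t_0})$ witnessed by the support of $h$, the inequality $f\circ\sigma^{-1}\geq f$ gives $\sigma^{-1}(\mathcal{U}_{t_0})\supseteq \mathcal{U}_{t_0}$, i.e.\ $\sigma(\mathcal{U}_{t_0})\subseteq\mathcal{U}_{t_0}$, and since $h\neq 0$ this inclusion is strict on some compact open subset; intersecting appropriately produces a compact open $\mathcal{U}$ with $\sigma(\mathcal{U})\subsetneq\mathcal{U}$. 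Then I would conclude via Theorem~\ref{thm:main} and the isomorphism above.

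The main obstacle is the converse direction: one must extract an honestly compressed compact open set from the mere existence of a nonzero $h\in\mathrm{Im}(\sigma_\ast-\mathrm{id})\cap C_c(X,\mathbb{N})$. The clean way is to exploit that $\sigma$ is a homeomorphism so that $\sigma_\ast - \mathrm{id}$ becomes $f\mapsto f\circ\sigma^{-1}-f$, reducing everything to the statement ``$f\circ\sigma^{-1}-f\geq 0$, $\not\equiv 0$, $f$ compactly supported'' and then running a superlevel-set (layer-cake) argument: pick the largest value $t_0$ attained by $f$ on the region where $h>0$, show $\mathcal{U}_{t_0}=\lbrace f\geq t_0\rbrace$ is compact open with $\sigma(\mathcal{U}_{t_0})\subseteq\mathcal{U}_{t_0}$ (from $f(\sigma(x))\leq f(x)$), and observe that strictness somewhere is exactly where $h$ is positive. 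Care is needed with signs and with the edge case where $f$ takes negative values (replace $f$ by $f+N1_{K}$ on a large compact open $K\supseteq\mathrm{supp}\,h\cup\mathrm{supp}\,f\cup\sigma(\mathrm{supp}\,f)$ to make it nonnegative without changing $h$ on $\mathrm{supp}\,h$, or argue directly with superlevel sets for all integer thresholds). I would also double-check the remark that for compact $X$ this notion of ``compresses no compact open set'' agrees with the formulation in \cite{Brown-quasidiagonal}, which is immediate since clopen sets are the compact open sets there, and finally note $(iii)\Rightarrow(iv)$ and $(iv)\Rightarrow$ the other items follow from the chain already established.
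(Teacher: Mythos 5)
The reduction to Theorem \ref{thm:main} and the easy direction are fine and agree with the paper: if $\sigma(\mathcal{U})\subsetneq\mathcal{U}$ then $(\sigma_\ast-\mathrm{id})(-1_{\mathcal{U}})=1_{\mathcal{U}\setminus\sigma(\mathcal{U})}$ is a nonzero element of $\mathrm{Im}(\sigma_\ast-\mathrm{id})\cap C_c(X,\mathbb{N})$, which is exactly the paper's choice $f=-1_{\mathcal{U}}$. The gap is in the converse, and it is concrete: your key inclusion is backwards. From $h=f\circ\sigma^{-1}-f\ge 0$ you get $f(\sigma^{-1}(x))\ge f(x)$ for all $x$, so $x\in\mathcal{U}_t=\lbrace f\ge t\rbrace$ implies $\sigma^{-1}(x)\in\mathcal{U}_t$; that is $\sigma^{-1}(\mathcal{U}_t)\subseteq\mathcal{U}_t$, equivalently $\mathcal{U}_t\subseteq\sigma(\mathcal{U}_t)$, \emph{not} $\sigma(\mathcal{U}_t)\subseteq\mathcal{U}_t$ as you claim. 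So superlevel sets are (weakly) compressed by $\sigma^{-1}$, not by $\sigma$. The sets $\sigma$ does map into themselves are the sublevel sets $\lbrace f\le t\rbrace$, but these are compact open only for $t<0$; hence your construction yields a compact open set strictly compressed by $\sigma$ only when the drop $f(\sigma(y))<f(y)$ occurs at a point with $f(\sigma(y))<0$, and otherwise only a set strictly compressed by $\sigma^{-1}$. Your proposed repairs (replacing $f$ by $f+N1_K$, or running over all integer thresholds) do not change which of the two maps compresses the resulting set.

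This is not a removable blemish when $X$ is noncompact. Take $X=\mathbb{Z}\cup\lbrace+\infty\rbrace$ (integers isolated, neighbourhoods of $+\infty$ the sets $\lbrace n\ge N\rbrace\cup\lbrace+\infty\rbrace$) and $\sigma(n)=n-1$, $\sigma(+\infty)=+\infty$. With $f=1_{\lbrace n\ge 0\rbrace\cup\lbrace+\infty\rbrace}$ one gets $\sigma_\ast(f)-f=1_{\lbrace -1\rbrace}\in C_c(X,\mathbb{N})\setminus\lbrace 0\rbrace$, so condition (iv) of Theorem \ref{thm:main} fails; yet no nonempty compact open $\mathcal{U}$ even satisfies $\sigma(\mathcal{U})\subseteq\mathcal{U}$, since it would have to contain $\lbrace k\le m\rbrace$ for some $m$, an infinite closed discrete set. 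So from the data $(f,h)$ one can in general only extract a compact open set compressed by $\sigma$ \emph{or} by $\sigma^{-1}$, and these are not interchangeable in the locally compact setting; they are when $X$ is compact, because $\sigma^{-1}(\mathcal{U})\subsetneq\mathcal{U}$ gives $\sigma(X\setminus\mathcal{U})\subsetneq X\setminus\mathcal{U}$ with $X\setminus\mathcal{U}$ clopen, hence compact. In other words, your layer-cake argument, carried out with the correct inclusions and a passage to complements, proves the compact case (Pimsner/Brown), but not the statement as written; the example above suggests the noncompact statement needs either compactness or a symmetrised compression condition, which is worth flagging. Note finally that the paper's own converse follows a different route: it forms $H=\sum_{k\ge 0}h\circ\sigma^k$ and manufactures the compressed set from level sets of $H$ and $f$ jointly, rather than from superlevel sets of $f$ alone.
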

\begin{proof}
    Conditions (i)-(iii) are equivalent by Theorem \ref{thm:main}. We show the equivalence $(iv) \Leftrightarrow (i)$. Suppose $C_0(X) \rtimes_\sigma \mathbb{Z}$ is not AF embeddable. By Theorem \ref{thm:main}, there are non-zero functions $f \in C_c(X, \mathbb{Z})$ and $h \in C_c(X, \mathbb{N})$ such that
    \begin{align}
    \label{eqn:topgraph:notAFE}
    \sigma_\ast(f) - f = h.
    \end{align}

    Since $\sigma$ is a homeomorphism, we have $\sigma_\ast(f) = f \circ \sigma^{-1}$. By applying $\sigma$ to both sides of \eqref{eqn:topgraph:notAFE}, we have 
    \begin{align}
    \label{eqn:topgraph:notAFE2}
    f - f \circ \sigma = h \circ \sigma.
    \end{align}
    Note that $h \circ \sigma \in C_c(X, \mathbb{N})$ is non-zero, and $f \geq f \circ \sigma$.
    
    Given a natural number $n$, define the function $H_n = \sum_{i=0}^n h \circ \sigma^i$. Then $H_n$ assumes non-negative values. Similarly, define $H(x)$ for every $x \in X$ by
    \begin{align*}
        H(x) = \sum_{k=0}^\infty h \circ \sigma^k(x).
    \end{align*}
    It follows from \eqref{eqn:topgraph:notAFE2} that $H(x)$ is bounded. The dominated convergence theorem implies that the equation above defines a continuous function $H: X \rightarrow \mathbb{N}$. Also, $H$ is non-zero.

    We are going to find a set that is a compressed by $\sigma$. Since $H$ is non-zero and bounded, there exists an $x_0 \in X$ be such that $H(x_0) = h(x_0) \geq 1$. We claim that $f(x_0) \leq 0$. Suppose this is false. Then define the compact sets
    \begin{align*}
    A = \lbrace x \in X : f(x) \geq 1 \hspace{7pt}\text{and}\hspace{7pt} H(x) \geq 1 \rbrace
    \hspace{5pt}\text{and}\hspace{5pt}
     K_n = \lbrace x \in X : f(x) \geq 1 \hspace{7pt}\text{and}\hspace{7pt} H_{n-1}(x) = 0 \rbrace.
    \end{align*}
    Both sets are non-empty because $\sigma^{-n}(x_0) \in K_n \cap A$ for $n \geq 1$. By the finite intersection property, there exists an $x_\infty \in \cap_{n=1}^\infty (A \cap K_n)$, which implies that $H(x_\infty) = 0$ because $x_\infty$ is in the intersection of the sets $K_n$. But we also have $H(x_\infty) \geq 1$ because $x_\infty \in A$, which is a contradiction. Therefore, $f(x_\infty) \leq 0$.

    Define the sets 
    \begin{align*}
        \mathcal{U}
        &= \lbrace x : H(x) = h(x) = h(x_0) > 0 \hspace{7pt}\text{and}\hspace{7pt}
        f(x) = f(x_0) \rbrace, \hspace{7pt}\text{and} \\
        V
        &= \lbrace x : H(x) = 0
        \hspace{7pt}\text{and}\hspace{7pt}
        f(x) < f(x_0) \rbrace.
    \end{align*}
    Then $\sigma(\mathcal{U}) \cup \sigma(V) \subset V$, which implies that $\sigma$ compresses the compact set $\mathcal{U} \cup V$.

    Conversely, suppose that $\sigma$ compresses a compact open subset $\mathcal{U} \subset X$. Let $f = -1_\mathcal{U}$. Then, by the definition of $\sigma_\ast$,
    \begin{align*}
        \sigma_\ast(f) - f
        = - 1_{\mathcal{U}} \circ \sigma^{-1} + 1_{\mathcal{U}}
        = 1_{\mathcal{U}} - 1_{\sigma(\mathcal{U})}.
    \end{align*}
    Since $\mathcal{U} \subsetneq \sigma(\mathcal{U})$, the function $h = 1_{\mathcal{U} \setminus \sigma(\mathcal{U})} \in C_c(X, \mathbb{N})$ is nonzero and satisfies $\sigma_\ast(f) - f = h$. Therefore, $C_0(X) \rtimes_\sigma \mathbb{Z}$ is not AF embeddable.
\end{proof}

\subsection{Topological graph algebras}

In this subsection we prove that Theorem \ref{thm:main} is an application of Theorem 6.7 of \cite{Schafhauser-topologicalgraphs} by Schafhauser. We consider the definition of topological graphs introduced by Katsura in \cite{KatsuraI} and studied in the subsequent papers \cite{KatsuraII, KatsuraIII, KatsuraIV}.

\begin{definition}
    A \emph{topological graph} $E = (E^0, E^1, s, r)$ consists of two locally compact Hausdorff spaces $E^0, E^1$, and two maps $r, s: E^1 \rightarrow E^0$, where $s$ is a local homeomorphism and $r$ is continuous. A $v \in E^0$ is called a \emph{sink} if $s^{-1}(v) = \emptyset$. We say that $E$ is compact if $E^0, E^1$ are compact and $s$ is surjective.
\end{definition}

For consistence of notation, we  make a change of notation, using $s$ instead of $d$. Here we also assume that the spaces $E^0$ and $E^1$ are Hausdorff. Note that every directed graph is an example of topological graph. Now define $\varepsilon$-pseudopaths as in Schafhauser's paper.

\begin{definition}
    Let $E$ be a topological graph, and let $\varepsilon > 0$.  Let $d$ be a metric on $E^0$ compatible with the topology. An \emph{$\varepsilon$-pseudopath} in $E$ is a finite sequence $\alpha = (e_1, e_2, \dots, e_n) \in E^1$ such that for each $i = 1,\dots,n-1$, $d(s(e_i), r(e_{i+1})) < \varepsilon$. We write $r(\alpha) = r(e_1)$ and $s(\alpha) = s(e_n)$. An $\varepsilon$-pseudopath $\alpha$ is called an \emph{$\varepsilon$-pseudoloop} based at $r(\alpha)$ if $d(r(\alpha), s(\alpha)) < \varepsilon$.
\end{definition}

Here we make a slight change of notation by writing $\alpha$ as $(e_1, \dots, e_n)$ instead of $(e_n, \dots, e_1)$ from \cite{Schafhauser-topologicalgraphs}, to make our notation compatible with the notation of graphs. We also write that a $\varepsilon$-pseudoloop $\alpha$ is based at $r(\alpha)$ insted of $s(\alpha)$. Now we state the theorem that characterises the AF embeddability of topological graph algebras, writing $r$ instead $s$ in item (v), to agree with our notation.

\begin{theorem}
\label{thm:Schafhausertopological}
    \cite[Theorem 6.7]{Schafhauser-topologicalgraphs} if $E$ is a compact topological graph with no sinks, then the following are equivalent:
    \begin{enumerate}[(i)]
        \item $C^*(E)$ is AF embeddable,
        \item $C^*(E)$ is quasidiagonal,
        \item $C^*(E)$ is stably finite,
        \item $C^*(E)$ is finite,
        \item $r$ is injective and for every $v \in E^0$ and $\varepsilon > 0$, there exists an $\varepsilon$-pseudoloop in $E$ based at $v$.
    \end{enumerate}
\end{theorem}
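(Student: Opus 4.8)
The plan is to deduce the theorem from Theorem \ref{thm:main} by realising $C^*(E)$ as the C*-algebra of a Deaconu-Renault groupoid. Restricting to the situation in which the paper's machinery applies—when $E^0$ and $E^1$ are totally disconnected, so that the infinite path space $E^\infty$ is totally disconnected—the shift $\sigma\colon E^\infty\to E^\infty$ is a surjective local homeomorphism (surjective because $E$ has no sinks, a local homeomorphism because $s$ is one), and the Deaconu-Renault groupoid $\mathcal{G}$ of $\sigma$ satisfies $C^*(E)\cong C^*(\mathcal{G})$ by the correspondence of \cite[Proposition 10.9]{KatsuraII} and \cite{KPRR}. With $X=E^\infty$, Theorem \ref{thm:main} immediately supplies $(i)\Leftrightarrow(ii)\Leftrightarrow(iii)$ together with the equivalent incompressibility condition
\[
\mathrm{(iv')}\qquad \mathrm{Im}(\sigma_\ast-\mathrm{id})\cap C_c(X,\mathbb{N})=\{0\}.
\]
It then remains to insert Schafhauser's pseudoloop condition $(v)$ and the finiteness condition $(iv)$ into this chain, which I would do by proving $(iv')\Leftrightarrow(v)$ and closing the loop through $(iv)$.

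For the finiteness condition, the implication $(iii)\Rightarrow(iv)$ is immediate, since a unital stably finite C*-algebra is finite. I would obtain $(iv)\Rightarrow(v)$ by contraposition, exactly as in the directed-graph subsection: when $(v)$ fails, the offending configuration (a vertex receiving two edges, or a point near which the shift admits no approximate return) manufactures a properly infinite projection in $C^*(E)$, so $C^*(E)$ is not finite. Combined with the implications $(v)\Rightarrow(iv')\Rightarrow(iii)$, which follow from $(iv')\Leftrightarrow(v)$ and Theorem \ref{thm:main}, this places $(iv)$ in the equivalence without appealing to the delicate converse "finite $\Rightarrow$ stably finite."

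The heart of the argument is $(iv')\Leftrightarrow(v)$. The injectivity clause of $(v)$ comes from the finiteness side: if the range map $r$ is not injective then $E$ contains an entrance-type configuration (a vertex that is the range of two distinct edges), which, as in the directed-graph subsection, yields both a properly infinite projection in $C^*(E)$ and a nonzero element of $\mathrm{Im}(\sigma_\ast-\mathrm{id})\cap C_c(X,\mathbb{N})$; thus incompressibility forces $r$ injective. Granting $r$ injective—which makes the backward shift orbits deterministic—I would match incompressibility with the pseudoloop clause by the structural analysis of $E^\infty$ used in Lemmas \ref{graph:lemma:np}--\ref{graph:lemma:sigmamu}. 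Using total disconnectedness, I replace each metric $\varepsilon$-ball in $E^0$ by a clopen neighbourhood, so that an $\varepsilon$-pseudoloop based at $v$ becomes a finite clopen approximate orbit of $\sigma$ returning to a prescribed clopen neighbourhood of the cylinder $Z(v)$. Condition $\mathrm{(iv')}$ says that no clopen subset of $E^\infty$ is carried strictly inside itself by a power of $\sigma$; by a Poincar\'e-type recurrence argument on clopen sets, this is exactly the assertion that such approximate returns exist through every clopen neighbourhood of every point, that is, that pseudoloops exist at all scales and all basepoints.

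The main obstacle is this last equivalence, and in particular the faithful translation between the analytic pseudoloop condition—quantified over a metric scale $\varepsilon$ and over every vertex $v\in E^0$—and the purely dynamical condition $\mathrm{(iv')}$, which is quantified over the compact open subsets of $E^\infty$. The two delicate points are: (a) choosing the clopen approximations of the $\varepsilon$-balls coherently along an approximate orbit, so that the orbit genuinely closes into a recurrence statement rather than merely an $\varepsilon$-chain; and (b) converting the pointwise quantifier (over $v\in E^0$) into the set-wise quantifier (over compact open $V\subset E^\infty$) that appears in $\mathrm{(iv')}$. This is precisely where the fibrewise structural lemmas on the shift, analogous to Lemmas \ref{graph:lemma:np}--\ref{graph:lemma:sigmamu}, are indispensable; once $(iv')\Leftrightarrow(v)$ is in hand, the remaining bookkeeping for $(iv)$ is routine and all five conditions are equivalent.
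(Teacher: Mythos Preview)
The paper does not prove this theorem at all: it is stated with a citation to \cite[Theorem~6.7]{Schafhauser-topologicalgraphs} and used as an external input. The corollary that follows it goes in the \emph{opposite} direction to your proposal: it shows that, for the particular topological graph $E=(X,X,\sigma,\mathrm{id})$ attached to a Deaconu--Renault system, condition~(v) of Schafhauser's theorem is equivalent (when $X$ is compact) to condition~(iv) of Theorem~\ref{thm:main}. So there is no ``paper's own proof'' to compare against; the theorem is quoted, not established.

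Your attempt to derive Schafhauser's theorem from Theorem~\ref{thm:main} also cannot succeed in the stated generality. Theorem~\ref{thm:main} requires the unit space to be totally disconnected, whereas Schafhauser's theorem is stated for an arbitrary compact topological graph with no sinks; you yourself had to insert the hypothesis that $E^0$ and $E^1$ are totally disconnected, which is not part of the statement. Moreover, the identification you invoke is not the one the paper uses: the paper associates to $\sigma\colon X\to X$ the topological graph $(X,X,\sigma,\mathrm{id})$, for which $r=\mathrm{id}$ is automatically injective; it does not realise a general compact topological graph as the Deaconu--Renault groupoid of the shift on $E^\infty$. Even granting total disconnectedness, the equivalence $(iv')\Leftrightarrow(v)$ you sketch would still require handling the injectivity clause of $(v)$ and the metric-to-clopen translation in a way that goes well beyond what Lemmas~\ref{graph:lemma:np}--\ref{graph:lemma:sigmamu} (which are about discrete directed graphs) provide.
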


Note that, for a Deaconu-Renault groupoid $\mathcal{G}$ corresponding to $\sigma: X \rightarrow X$, then $E = (E^0, E^1, s, r) \coloneqq (X, X, \sigma, \mathrm{id})$ is a topological graph  with $C^*(E) \cong C^*(\mathcal{G})$ by \cite[Proposition 10.9]{KatsuraII}. For Deaconu-Renault groupoids, we prove the equivalence of items $(i), (ii), (iii)$ and $(v)$ in Theorem \ref{thm:Schafhausertopological}.

\begin{corollary}
    Let $E =(E^0, E^1, s, r) \coloneqq (X, X, \sigma, \mathrm{id})$ be the topological graph corresponding to $\mathcal{G}$. Then the following are equivalent:
    \begin{enumerate}[(i)]
    \item $C^*(\mathcal{G})$ is AF embeddable,
    \item $C^*(\mathcal{G})$ is quasidiagonal,
    \item $C^*(\mathcal{G})$ is stably finite,
    \item $\mathrm{Im}(\sigma_\ast - \mathrm{id}) \cap C_c(X, \mathbb{N}) = \lbrace 0 \rbrace.$
    \end{enumerate}
    Moreover, consider the following statement:
    \begin{enumerate}[(i)]
        \setcounter{enumi}{4}
        \item for every $x \in X$ and $\varepsilon > 0$, there is an $\varepsilon$-pseudoloop in $E$ based at $x$.
    \end{enumerate}
    Then $(v) \Rightarrow (iv)$. If $X$ is compact, then all the items above are equivalent.
\end{corollary}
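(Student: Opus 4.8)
The equivalence of (i)--(iv) is immediate from Theorem~\ref{thm:main}, so the work is entirely in analysing the $\varepsilon$-pseudoloop condition (v) for the specific topological graph $E = (X, X, \sigma, \mathrm{id})$. The first step is to unwind what an $\varepsilon$-pseudoloop means in this case. Since $E^1 = E^0 = X$ with $s = \sigma$ and $r = \mathrm{id}$, a finite sequence $(e_1,\dots,e_n) \in X^n$ is an $\varepsilon$-pseudopath exactly when $d(\sigma(e_i), e_{i+1}) < \varepsilon$ for $1 \le i < n$, and it is an $\varepsilon$-pseudoloop based at $e_1$ when additionally $d(e_1, \sigma(e_n)) < \varepsilon$. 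I would first observe that $r = \mathrm{id}$ is automatically injective, so the only content of Schafhauser's condition (v) in Theorem~\ref{thm:Schafhausertopological} for this graph is the $\varepsilon$-pseudoloop clause, which is precisely statement~(v) of the corollary. Thus when $X$ is compact, $E$ is a compact topological graph with no sinks (as $\sigma$ is surjective), so Theorem~\ref{thm:Schafhausertopological} directly gives the equivalence of (i)--(iii) with (v), and combined with Theorem~\ref{thm:main} this yields all items equivalent in the compact case.

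The remaining, and only substantive, task is to prove the implication $(v) \Rightarrow (iv)$ \emph{without} assuming compactness. I would argue by contrapositive: assume (iv) fails, so by Corollary~\ref{corollary:ofthm1} (equivalently Theorem~\ref{thm:main}) there are nonzero $f \in C_c(X,\mathbb{Z})$, $h \in C_c(X,\mathbb{N})$ with $\sigma_\ast(f) - f = h$, and I must produce a point $x \in X$ and an $\varepsilon > 0$ admitting no $\varepsilon$-pseudoloop based at $x$. The idea is to exploit the ``compression'' phenomenon hiding in the equation $\sigma_\ast(f) - f = h$: summing over $\sigma$-preimages, the relation $\sigma_\ast(f) = f + h$ with $h \ge 0$, $h \ne 0$ forces $f$ to strictly decrease somewhere along forward orbits in a way incompatible with being compactly supported unless there is genuine ``escape to infinity.'' Concretely, pick $x_0$ with $h(x_0) > 0$; iterating $\sigma_\ast$ and using that for each $y$ the value $f(y)$ is bounded by a running sum of $f$ along preimage trees, one locates (via the finite-intersection / König's-lemma style argument already used in the $C_0(X)\rtimes_\sigma\mathbb{Z}$ subsection) a point $x$ whose forward $\sigma$-orbit, and indeed whose entire $\varepsilon$-pseudo-orbit for small enough $\varepsilon$, never returns near $x$: the support of $f$ together with continuity gives a clopen neighbourhood $U$ of $x$ on which $f$ is constant and such that $\sigma(U)$ and all small perturbations of it are pushed into a region $V$ with $f$ strictly smaller, with $U \cap V = \emptyset$; choosing $\varepsilon$ below the distance between $U$ and $V$ (and below the ``width'' controlling how far a single pseudo-step can move a point out of the relevant clopen pieces), no $\varepsilon$-pseudoloop based at $x$ can exist, since after the first step it is trapped in $V$ and $f$ can never climb back up.

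I expect the main obstacle to be making this escape-to-infinity argument precise in the locally compact (non-compact) setting: in the compact case one can lean on Theorem~\ref{thm:Schafhausertopological} directly, but for $(v)\Rightarrow(iv)$ in general I must build the ``compressing'' clopen set by hand from $f$ and $h$, controlling simultaneously (a) the finitely many values $f$ takes on its support, (b) the local-homeomorphism ``width'' of $\sigma$ near the relevant points so that an $\varepsilon$-pseudostep behaves like a genuine application of $\sigma$, and (c) the metric gap between the source clopen set $U$ and the trap $V$. The bookkeeping is closely parallel to the construction of the sets $\mathcal{U}$, $V$ with $\sigma(\mathcal{U}) \cup \sigma(V) \subset V$ in the proof of the $C_0(X)\rtimes_\sigma\mathbb{Z}$ corollary, so I would model the argument on that proof, replacing ``$\sigma$ compresses $\mathcal{U}$'' with ``$\varepsilon$-pseudo-orbits from $x \in \mathcal{U}$ are trapped in $V$'' and checking that the finitely-many-values and totally-disconnected hypotheses let us pick $\varepsilon$ uniformly. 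Once that clopen compressing set is in hand, the failure of (v) at any of its points is essentially immediate.
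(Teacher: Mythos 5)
The equivalence of (i)--(iv) and your observation that $r=\mathrm{id}$ makes Schafhauser's condition reduce to the pseudoloop clause are fine, and for compact $X$ quoting Theorem \ref{thm:Schafhausertopological} does logically yield the equivalence of all five items (given $C^*(E)\cong C^*(\mathcal{G})$). But note this is not the paper's route, and it runs against the purpose of the section, which is to re-derive such statements from Theorem \ref{thm:main}: the paper proves $(iv)\Rightarrow(v)$ for compact $X$ directly, by showing that if no $\varepsilon$-pseudoloop is based at $x$ then a small open $V\ni x$ satisfies $\sigma^i(V)\cap V=\emptyset$ for all $i\geq 1$, so $\Omega=\bigcup_{i\geq 0}\sigma^i(V)$ is compact open with $\sigma(\Omega)\subsetneq\Omega$, and $f=1_{\sigma(\Omega)}$ produces a nonzero element of $\mathrm{Im}(\sigma_\ast-\mathrm{id})\cap C_c(X,\mathbb{N})$.

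The genuine gap is in $(v)\Rightarrow(iv)$ for general (non-compact) $X$, which is exactly the step you flag as the ``main obstacle'' and then leave unexecuted. Your plan transplants the $C_0(X)\rtimes_\sigma\mathbb{Z}$ argument, but that argument uses invertibility of $\sigma$ essentially: there $\sigma_\ast(f)=f\circ\sigma^{-1}$, so $\sigma_\ast(f)-f=h$ becomes $f-f\circ\sigma=h\circ\sigma\geq 0$, giving pointwise monotonicity of $f$ along forward orbits and making $H=\sum_k h\circ\sigma^k$ well behaved; these are the ingredients behind the finite-intersection construction of the trapping sets. For a many-to-one local homeomorphism, $\sigma_\ast(f)(x)=\sum_{\sigma(y)=x}f(y)=f(x)+h(x)$ gives no pointwise comparison between $f(y)$ and $f(\sigma(y))$, so the claim that the equation ``forces $f$ to strictly decrease somewhere along forward orbits'' is unsubstantiated, and the construction of the clopen sets $\mathcal{U}$, $V$ with trapped ($\varepsilon$-pseudo-)orbits does not carry over. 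The paper's proof instead exploits that $f$ and $\sigma_\ast(f)$ are locally constant with compact support on a totally disconnected space: it writes $f=\sum_{j\leq m}b_j 1_{V_j}$ and $\sigma_\ast(f)=\sum_{i\leq n}a_i 1_{V_i}$ over finitely many disjoint compact open sets with $\sigma(V_i)=V_{\rho(i)}$, a counting argument forces $n>m$, and then a point $x$ in a set not hit by $\rho$, with $\varepsilon$ small enough that $B(x,\varepsilon)\subset V_n$, admits no $\varepsilon$-pseudoloop. Without this combinatorial reduction (or some other precise mechanism for controlling $\varepsilon$-pseudo-orbits of a non-invertible $\sigma$), your sketch does not yet prove $(v)\Rightarrow(iv)$.
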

\begin{proof}
    Theorem \ref{thm:main} gives the equivalence $(i) \text{--} (iv)$. First we show $(v) \Rightarrow (iv)$. Suppose $(iv)$ is false. Then there are nonzero functions $f \in C_c(X, \mathbb{Z})$ and $h \in C_c(X, \mathbb{N})$ such that $\sigma_\ast(f) - f = h$. Since $\sigma$ is a local homeomorphism on a totally disconnected space and since $f, \sigma_\ast(f) \in C_c(X, \mathbb{Z})$, then there are disjoint compact open sets $V_1, \dots, V_n$, there is a number $1 \leq m \leq n$, and a surjective function $\rho: \lbrace 1, \dots, n \rbrace \rightarrow \lbrace 1, \dots, m \rbrace$ such that
    \begin{itemize}
        \item $\sigma$ is injective on each $V_i$,
        \item $\sigma(V_i) = V_{\rho(i)}$,
        \item There are integers $a_1, \dots, a_n$ and nonzero integers $b_1, \dots, b_m$ such that
        \begin{align*}
            \sigma_\ast(f) = \sum_{i=1}^n a_i 1_{V_i}
            \hspace{7pt}\text{and}\hspace{7pt}
            f = \sum_{j=1}^m b_j 1_{V_j}.
        \end{align*}
    \end{itemize}
    It is straightforward that, for every $j = 1, \dots, m$, $b_j = \sum_{i : \rho(i) = j} a_i$. We claim that $n > m$. Suppos this is false. Then $\rho$ is a bijection with $a_j = b_{\rho^{-1}(j)}$. Since $h$ is nonzero, then there exists an $l$ such that $a_j > b_j$. This implies that
    \begin{align*}
        \sum_{i=1}^n a_i > \sum_{i=1}^n b_i \Rightarrow
        \sum_{i=1}^n b_{\rho^{-1}(i)} = \sum_{i=1}^n b_i > \sum_{i=1}^n b_i,
    \end{align*}
    which is a contradiction. Therefore, $n > m$.

    Note that $a_i > .0$ for all $i$. Let $x \in C_n$. Then, for every $l > 0$, we have $\sigma^l(x) \in V_1 \cup \dots \cup V_m$. Let $\varepsilon > 0$ be such that $B(x, \varepsilon) \subset V_n$. This implies that there is no $\varepsilon$-pseudoloop in $E$ based at $x$. Therefore $(v)$ is false.

    Now assume that $X$ is compact. We prove $(iv) \Rightarrow (v)$. Suppose $(v)$  is false. Then there are $x \in X$ and $\varepsilon > 0$ such that there exists no $\varepsilon$-pseudoloop  in $E$ based at $x$.  By continuity, there exists an open set $V \subset B(x, \varepsilon)$ containing $x$ such that $d(\sigma(u), \sigma(x)) < \varepsilon$ for all $u \in V$. Note that
    \begin{align}
    \label{eqn:topologicalgraph:noloop}
    \sigma^i(V) \cap V = \emptyset
    \hspace{7pt}\text{for all }i \geq 1,
    \end{align}
    otherwise $X$ would have an $\varepsilon$-pseudoloop based at $x$. Let $\Omega = \bigcup_{i=0}^\infty \sigma^i(V)$. Then $\Omega$ is compact open because $X$ is compact and totally disconnected. By \ref{eqn:topologicalgraph:noloop}, we have $\sigma(\Omega) \subset \Omega$ and $\sigma(\Omega) \cap V = \emptyset$. So $\sigma(\Omega) \subsetneq \Omega$. Let $f = 1_{\sigma(\Omega)}$, define $h = \sigma_\ast(f) - f$. Then, for all $y \in X$, we have
    \begin{align*}
        \sigma_\ast(f)(y)
        = \sum_{u : \sigma(u) = y} 1_{\sigma(\Omega)}(u)
        \geq 1_\Omega(y)
        \geq 1_{\sigma(\Omega)}(y)
        = f(y).
    \end{align*}
    This implies that $h \in C_c(X, \mathbb{N})$. Moreover,
    \begin{align*}
        h = \sigma_\ast(f) - f
        \geq 1_{\Omega}
        - 1_{\sigma(\Omega)}
        = 1_{\Omega \setminus \sigma(\Omega)}
        \neq 0.
    \end{align*}
    Therefore, $(iv)$ is false.
\end{proof}

\begin{remark}
    When $X$ is not compact, the implication $(iv) \Rightarrow (v)$ might not be true. For example, consider $X = \mathbb{Z}$ and define $\sigma: X \rightarrow X$ by $\sigma(n) = n = 1$. Then $C^*(\mathcal{G})$ is AF embeddable, but there are no $\varepsilon$-pseudoloops for $0 < \varepsilon \leq 1/2$.
\end{remark}

\begin{remark}
    Note that for graph algebras and for C*-algebras of compact topological graphs, the C*-algebra is AF embeddable if, and only if, it is finite. We do not know if this equivalence for C*-algebras of Deaconu-Renault groupoids. We believe this is an open question.
\end{remark}

\subsection{Cuntz-Pimsner algebras}

Cuntz-Pimsner algebras form a large class of C*-algebras that includes, among others, the C*-algebra associated with a Deaconu-Renault groupoid $\mathcal{G}$. By applying the techniques described in \cite{RRS}, one can show that $C^*(\mathcal{G})$ is a Cuntz-Pimsner algebra, for which Schafhauser characterised the AF embeddability in \cite[Theorem C]{SchafhauserJFA2015}. We refer to Chapter 8 of \cite{Raeburn} for an introduction to Cuntz-Pimsner algebras.

\begin{definition}
    Let $G$ be a locally compact, Hausdorff, second countable, \'etale groupoid, and let $c: G \rightarrow \mathbb{Z}$ be a continuous cocycle. We say that $c$ is \emph{unperforated} if, for every $g \in G$ such that $c(g) = n > 0$, there exist composable $g_1, g_2, \dots, g_n$ such that each $c(g_i) = 1$ and $g = g_1 g_2 \dots g_n$.
\end{definition}

\begin{lemma}
The canonical cocycle $c: \mathcal{G} \rightarrow \mathbb{Z}$ on the Deaconu-Renault groupoid $\mathcal{G}$ is unperforated.
\end{lemma}
\begin{proof}
Let $(x, k, y) \in \mathcal{G}$ such that $c(x,k,y) = k > 0$. Then there exists an $m$ such that $\sigma^m+k(x) = \sigma^m(y)$. Since $\sigma$ is surjective, we can choose $z \in X$ with $\sigma^{k-1}(z) = y$. This implies that
\[
\sigma^m+k(x)
= \sigma^m(y)
= \sigma^{m+k-1}(z).
\]
This implies that $(x, 1, z) \in \mathcal{G}$ and $c(x,1,z) = 1$. Moreover, for $i = 1, \dots, k-1$, we have $(\sigma^{i-1}(z), 1, \sigma^i(z)) \in \mathcal{G}$ and $c(\sigma^{i-1}(z), 1, \sigma^i(z)) = 1$, where $\sigma^0(z) = z$. Hence,
\[
(x,k,y)
= (x,1, z)
(z, 1, \sigma(z))
\dots
(\sigma^{i-3}(z), 1, \sigma^{i-2}(z))
(\sigma^{i-2}(z), 1, y).
\]
Therefore, $c$ is unperforated.
\end{proof}

We will apply the following theorem by Rennie, Robertson, and Sims \cite{RRS} to write $C^*(\mathcal{G})$ as a Cuntz-Pimsner algebra.

\begin{theorem}
\label{thm:fromRRS}
\cite[Lemma 4 and Theorem 11]{RRS}
Suppose $G$ is a locally compact, Hausdorff, second countable, \'etale groupoid, and that $c: G \rightarrow \mathbb{Z}$ is an unperforated continuous cocycle. Let $X(G)$ be the completion in $C_r^*(G)$ of the set
\[
\lbrace f \in C_c(G) : \mathrm{supp}\hphantom{.} f \subset c^{-1}(1) \rbrace.
\]
Then the inclusion $I_0: C_c(c^{-1}(0)) \rightarrow C_c(G)$ extends to an embedding $I_0: C_r^*(c^{-1}(0)) \rightarrow C_r^*(G)$ and the inclusion $I_1: C_c(c^{-1}(1)) \rightarrow C_c(G)$ extends to a linear map $I_1: X(G) \rightarrow C_r^*(G)$. The set $X(G)$ is a C*-correspondence over $C_r^*(G)$ with module actions
\[
a \cdot \xi = I_0(a) \xi\hspace{15pt}
\text{and}\hspace{15pt}
\xi \cdot a = \xi I_0(a)
\hspace{15pt}
\text{for $\xi \in X(G)$, $a \in C_r^*(c^{-1}(0))$.}
\]
Moreover, the pair $(I_1, I_0)$ is a Cuntz-Pimsner covariant representation of $X(G)$, and the integrated form $I_1 \times I_0$ is an isomorphism of the Cuntz-Pimsner algebra $\mathcal{O}_{C^*_r(c^{-1}(0))}(X(G))$ onto $C_r^*(G)$.
\end{theorem}

\begin{remark}
    We make a change in notation, writing $\mathcal{O}_{C^*_r(c^{-1}(0))}(X(G))$ instead of $\mathcal{O}_{X(G)}$ to maintain consistency with the notation used in Schafhauser's paper \cite{SchafhauserJFA2015}.
\end{remark}

Now we write $C^*(\mathcal{G})$ as a Cuntz-Pimsner algebra.

\begin{corollary}
    The C*-algebra $C^*(\mathcal{G})$ is isomorphic to $\mathcal{O}_{C^*(c^{-1}(0))}(X(\mathcal{G}))$.
\end{corollary}
\begin{proof}
Since the canonical cocyle $c: \mathcal{G} \rightarrow \mathbb{Z}$ is unperforated, Theorem \ref{thm:fromRRS} implies that $C^*_r(\mathcal{G})$ is isomorphic to $\mathcal{O}_{C_r^*(c^{-1}(0))}(X(\mathcal{G}))$. The groupoids $\mathcal{G}$ and $c^{-1}(0)$ are amenable, then $\mathcal{O}_{C_r^*(c^{-1}(0))}(X(\mathcal{G})) = \mathcal{O}_{C_*(c^{-1}(0))}(X(\mathcal{G}))$ and $C^*_r(\mathcal{G}) = C^*(\mathcal{G})$.
\end{proof}

\begin{proposition}
\cite[Proposition 4.3]{SchafhauserJFA2015}
Suppose $A$ and $B$ are C*-algebras and $H$ is a row-finite $A\text{--}B$ C*-correspondence. If $T$ is an $A$-Ferdholm operator, then $T\otimes 1_H$ is a $B$-Fredholm operator and the map $T \mapsto T \otimes 1_H$ induces a positive group homomorphism $[H]: K_0(A) \rightarrow K_0(B)$.
\end{proposition}

The following theorem by Schafhauser characterises the AF embeddability for a large class of Cuntz-Pimsner algebras.

\begin{theorem}\cite[Theorem C]{SchafhauserJFA2015}
    Suppose $A$ is an AF algebra and $H$ is a separable C*-correspondence over $A$. Then the following are equivalent:
    \begin{enumerate}
        \item $\mathcal{O}_A(H)$ is AF embeddable,
        \item $\mathcal{O}_A(H)$ is quasidiagonal,
        \item $\mathcal{O}_A(H)$ is stably finite.
    \end{enumerate}
    Moreover, if $H$ is row-finite and faithful, the above conditions are equivalent to
    \begin{enumerate}
        \setcounter{enumi}{3}
        \item if $x \in K_0(A)$ and $[H](x) \leq x$, then $[H](x) = x$.
    \end{enumerate}
\end{theorem}

Note that, when $X$ is totally disconnected, the C*-algebra $C^*(c^{-1}(0))$ is AF. Hence, for the C*-algebra $C^*(\mathcal{G}) = \mathcal{O}_{C^*(c^{-1}(0))}(X(\mathcal{G}))$, conditions (1)--(3) are equivalent. It is not clear to the author, whether $X(\mathcal{G})$ is row-finite or faithful.

\bibliographystyle{abbrv}
\bibliography{bibliography}{}

\end{document}